\documentclass[12pt,onecolumn]{article}
\usepackage[mathscr]{euscript}
\usepackage{mismath}
\usepackage{pst-node}
\usepackage{tikz-cd} 
\usepackage{tikz} 
\usepackage{pgfplots}
\usepgfplotslibrary{fillbetween}
\usetikzlibrary{intersections, pgfplots.fillbetween}
\usepackage{float}
\usetikzlibrary {arrows.meta}
\usetikzlibrary{decorations.markings}
\usepackage{xfrac}
\usepackage{bm}
\usepackage{graphicx}
\usepackage[final]{pdfpages} 
\usepackage{enumerate} 
\usepackage{enumitem}
\setlist[enumerate]{noitemsep}
\setlist[itemize]{noitemsep}
\usepackage{amssymb}
\usepackage{amsmath} 
\usepackage{amsthm}
\usepackage{bm}
\usepackage{bibentry} 
\usepackage[T1]{fontenc}
\usepackage{pbsi}
\usepackage{fancyvrb}
\usepackage{amssymb}
\usepackage{extarrows}
\usepackage{indentfirst}

\usepackage[labelsep=period]{caption} 

\usepackage{titlesec} 
\titleformat*{\section}{\large\bfseries} 
\titleformat*{\subsection}{\normalsize\bfseries}
\usepackage{amsthm}

\theoremstyle{plain}
\newtheorem{theorem}{Theorem}[section]
\newtheorem{corollary}[theorem]{Corollary}
\newtheorem{lemma}[theorem]{Lemma}
\newtheorem{proposition}[theorem]{Proposition}

\theoremstyle{remark}
\newtheorem{remark}[theorem]{Remark}
\newtheorem{question}[theorem]{Question}

\theoremstyle{definition}
\newtheorem{definition}[theorem]{Definition}

\newcounter{alpha}

\theoremstyle{plain}
\newtheorem{theoremalpha}[alpha]{Theorem}
\newtheorem{propositionalpha}[alpha]{Proposition}

\DeclareMathOperator{\bord}{Bord}

\DeclareMathOperator{\Top}{Top}
\DeclareMathOperator{\Sm}{Sm}

\DeclareMathOperator{\FS}{FS}

\DeclareMathOperator{\TOP}{TOP}
\DeclareMathOperator{\oo}{O}
\DeclareMathOperator{\bo}{BO}

\DeclareMathOperator{\BTOP}{BTOP}
\DeclareMathOperator{\BSpin}{BSpin}
\DeclareMathOperator{\BTopSpin}{BSPIN
TOP}

\DeclareMathOperator{\BDIFF}{BDIFF}
\DeclareMathOperator{\gamtop}{\mathfrak{g}_{TOP}}

\DeclareMathOperator{\gamo}{\gamma_{O}}

\newcommand{\exx}{{\scriptstyle{\mathfrak{X}}}}

\newcommand{\topcat}[1]{\bord^{\Top}_{{#1}}}

\newcommand{\fscat}[1]{\bord^{\FS}_{{#1}}}
\newcommand{\smcat}[1]{\bord^{\Sm}_{{#1}}}

\newcommand{\TOPn}[1]{\TOP{{(#1)}}}

\newcommand{\On}[1]{\oo{{(#1)}}}
\newcommand{\bTOPn}[1]{\BTOP{{(#1)}}}

\newcommand{\bon}[1]{\bo{{(#1)}}}
\newcommand{\gamtopn}[1]{\mathfrak{g}^{{#1}}_{\TOP}}

\newcommand{\gamon}[1]{\gamma^{{#1}}_{\oo}}

\newcommand{\micro}[1]{\mathfrak{t}_{{#1}}}

\DeclareMathOperator{\ks}{ks}
\DeclareMathOperator{\kk}{K}
\DeclareMathOperator{\vectc}{Vect_{\mathbb{C}}}
\DeclareMathOperator{\svectc}{sVect_{\mathbb{C}}}
\DeclareMathOperator{\hilb}{Hilb_{\mathbb{C}}}

\DeclareMathOperator{\sq}{Sq}
\newcommand{\sqi}[1]{\sq^{{#1}}}

\newcommand{\longrightsquigarrow}{\mathrel{\tikz{\draw[->,thick] (0,0) -- (0.05,0.05) -- (0.15,-0.05)--(0.25,0.05)-- (0.3,0) -- (0.5,0);
}}}
\newcommand{\extralongrightsquigarrow}{\mathrel{\tikz{\draw[->,thick] (0,0) -- (0.05,0.05) -- (0.15,-0.05)--(0.25,0.05)-- (0.35,-0.05)--(0.45,0.05)--(0.55,-0.05) -- (0.65,0.05) -- (0.75,-0.05) -- (0.8,0) -- (1,0);
}}}

\newcommand{\LL}{\left(}
\newcommand{\RR}{\right)}
\newcommand{\meight}{M^4_{E_8 \oplus E_8}}
\newcommand{\msixt}{M^4_{E_{16}}}

\newcommand{\ssm}[1]{{{#1}}\times S^m}

\usepackage[backend=biber,maxalphanames=99,maxbibnames=99,style=alphabetic]{biblatex}
\DeclareMathOperator{\Int}{Int}

\pgfplotsset{compat=1.18}

\usepackage{setspace}
\AtBeginDocument{\singlespacing}

\makeatletter
  \g@addto@macro\th@plain{\singlespacing}
  \g@addto@macro\th@definition{\singlespacing}
  \g@addto@macro\th@remark{\singlespacing}
  \makeatother

\usepackage{hyperref}
\hypersetup{
    colorlinks=false,
    pdfborder={0 0 0}
}
\begin{document}

\title{\large\textbf{Failure of Ordinary TQFTs to Detect Homotopy Type}}
\author{{Katherine Novey}
\date{} }
\maketitle

\begin{abstract}
   Work by David Reutter and Christopher Schommer-Pries has shown that ordinary smooth TQFTs can distinguish (stable) diffeomorphism classes of closed, connected, even-dimensional manifolds subject to certain finiteness conditions. In particular, simply connected, closed, smooth $6$-manifolds with finite $\pi_2$ are diffeomorphic if and only if they cannot be distinguished by ordinary TQFTs. The question of whether or not  this result holds for all simply connected closed $6$-manifolds was open. We consider TQFTs out of the topological and smooth bordism categories, as well as out of a new bordism category constructed in this paper called the formally smooth (FS) bordism category. We present a pair of simply connected closed $6$-manifolds with infinite $\pi_2$ that are not homotopy equivalent, yet are indistinguishable by ordinary TQFTs in all three categories. We extend this counter-example to show that ordinary TQFTs cannot distinguish the homotopy type of simply connected closed manifolds in all dimensions $\geq 6$ and non-simply connected closed $5$-manifolds. We show that our class of counter-example pairs is spinnable and present a non-spinnable class of counter-examples by introducing a $\mathbb{C}P^2$ connected summand. Lastly, we use more highly connected analogues of our counter-examples to show that, for all $n\geq2$ and $m\geq 2n$, there is a pair of $(2n-1)$-connected closed $(4n+m)$-manifolds that are not homotopy equivalent, yet cannot be distinguished by ordinary TQFTs. 
 \end{abstract}
\newpage

\renewcommand{\baselinestretch}{1.2}    
\tableofcontents
\newpage

\section{Introduction}

We will assume all manifolds to be oriented and compact, unless otherwise stated.
In this paper, we discuss manifolds in several categories. In the introduction we will write "manifold" and "isomorphism" so as to encompass all of these categories. We use "$\cong$" to indicate isomorphism.

\begin{definition}
    An $\bm{n}$\textbf{-dimensional topological quantum field theory (TQFT)} is a symmetric monoidal functor
        \[\mathcal{Z}:\bord_n \to \vectc,\]
    
    \noindent where $\bord_n$ is the $n$-dimensional bordism category, and $\vectc$ is the category of complex vector spaces and complex linear maps.\footnote{One can also define TQFTs with target category $\svectc$, the category of complex super vector spaces and super linear maps. All of the arguments in this paper hold in that case as well.} 
\end{definition}

In this paper, we restrict our attention to \textit{ordinary} TQFTs (see Definition \ref{def: ord}). All currently known examples of TQFTs in dimension at least $4$ are ordinary.\footnote{Until quite recently, this was also true in dimension $3$. However, Riva constructed a non-ordinary $3$-dimensional TQFT using a more general target (higher) category (see \cite{lorpzo1},\cite{lorpzo2}).} TQFTs yield isomorphism invariants of closed manifolds. There has been significant work done over the past couple of decades investigating the sensitivity of these invariants and which manifolds they distinguish. Although this question is still largely open, it has been shown that, in general, ordinary TQFTs do not provide complete isomorphism invariants. For even-dimensional manifolds, there is a coarser relation than isomorphism, known as stable isomorphism (see Definition \ref{def: stab iso}). Reutter and Schommer-Pries showed in \cite{rsp} that ordinary even-dimensional TQFTs can never witness any finer relation.\footnote{The work of Reutter and Schommer-Pries considers semisimple TQFTs, which are defined with weaker conditions than ordinary TQFTs. However, since all known examples of TQFTs in relevant dimensions are ordinary, we quote their results in terms of ordinary TQFTs.} The main result of this paper expands on that work and further highlights shortcomings of TQFT invariants.

\begin{theoremalpha}
    Ordinary TQFTs cannot detect the homotopy type or the stable isomorphism type of simply-connected closed manifolds in all dimensions $\geq 6$ and non simply-connected closed $5$-manifolds.
\end{theoremalpha}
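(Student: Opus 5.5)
The plan is to build every counterexample from one pair of closed topological $4$-manifolds: $\meight$ and $\msixt$, the simply connected manifolds (given by Freedman) whose intersection forms are $E_8\oplus E_8$ and $E_{16}$. These two forms are even, unimodular and definite, with the same rank ($16$) and signature ($16$), but they are not isometric. Hence the two $4$-manifolds are not homotopy equivalent. On the other hand, $E_8\oplus E_8\oplus H\cong E_{16}\oplus H$, so by Freedman
\[
\meight\#(S^2\times S^2)\cong\msixt\#(S^2\times S^2).
\]
Both manifolds have $\ks=\sigma/8 \bmod 2=0$, so products with spheres are candidates for smoothing. The candidate pairs are $\ssm{\meight}$ and $\ssm{\msixt}$ for $m\ge 2$. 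These are simply connected of dimension $4+m\ge 6$, and $\pi_2$ is infinite, so the finiteness hypothesis of Reutter--Schommer-Pries fails. For dimension $5$ I would use $\meight\times S^1$ and $\msixt\times S^1$, which are not simply connected.

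The first step is to show the paired manifolds are not homotopy equivalent. By the K\"unneth formula,
\[
H^*(X\times S^m)\cong H^*(X)\otimes H^*(S^m).
\]
For $m\ne 2$ the cup product $H^2\otimes H^2\to H^4$ recovers the intersection form of $X$, so a homotopy equivalence would give an isometry $E_8\oplus E_8\cong E_{16}$. For $m=2$ the extra degree-$2$ class complicates this. In that case I would use the cubic/quadratic form of the $6$-manifold: Wall's invariants of simply connected $6$-manifolds say that the form restricted to the image of $H^2(X)$ must be isometric. The same idea applies to $\ssm{\meight}$ versus $\ssm{\msixt}$ in the "not stably isomorphic" direction, since stable isomorphism preserves these forms modulo hyperbolic summands. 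I also need to check smoothability of $X\times S^m$ (vanishing Kirby--Siebenmann invariant plus the relevant lifting obstructions), so that the statement holds in the smooth and $\FS$ categories as well as the topological one.

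The second step, which I expect to be the main obstacle, is showing that every ordinary TQFT $\mathcal Z$ assigns the same number to the two manifolds in each pair. The idea is to write $X\times S^m=(X\times D^m)\cup_{X\times S^{m-1}}(X\times D^m)$ and use the structure of ordinary TQFTs from Definition~\ref{def: ord}. Such a theory should make the invariant of a closed manifold of the form $W\cup\overline W$ depend only on $\mathcal Z(\partial W)$ together with a trace. When $m=1$ this reads $\mathcal Z(X\times S^1)=\dim\mathcal Z(X)$. The required input is therefore a lemma saying that $\mathcal Z(X)\cong\mathcal Z(X')$ compatibly whenever $X\#(S^2\times S^2)\cong X'\#(S^2\times S^2)$. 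The proof I would try is the Reutter--Schommer-Pries argument: connected sum with $S^2\times S^2$ acts on state spaces by an operator that, for an ordinary theory, is invertible up to a nonzero scalar (semisimplicity forces $\mathcal Z(S^2\times S^2)\ne 0$). Then
\[
\mathcal Z(X\times S^m)=\lambda^{-1}\,\mathcal Z\bigl((X\#S^2\times S^2)\times S^m\bigr)
\]
for a scalar $\lambda$ depending only on the local surgery. The same expression holds for $X'$, and since the right-hand sides agree, so do $\mathcal Z(X\times S^m)$ and $\mathcal Z(X'\times S^m)$. The delicate point is making the "stabilization is invertible up to scalar" statement hold in the product $X\times S^m$ rather than only in $X$. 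The connected sum there is a fibrewise surgery along $S^2\times S^m$, not a point surgery, so I would first establish this for $m=1$ by the trace formula and then bootstrap to higher $m$ by induction using $S^m=D^m\cup D^m$.

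Finally, the remaining cases are handled separately. For the $5$-dimensional non-simply connected case the trace formula above gives the argument directly. For dimensions $\ge 6$ the cases $m\ge 2$ together cover every dimension. In each case I would run the bordism-category argument uniformly for $\topcat{n}$, $\smcat{n}$ and $\fscat{n}$, since it uses only gluing and the ordinary condition.
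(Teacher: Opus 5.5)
Your building blocks and the non-homotopy-equivalence strategy match the paper. The core step, however, has a genuine gap, and the missing idea is \emph{dimensional reduction}. Given an ordinary $(4+m)$-dimensional $\mathcal{Z}$, the composite $\mathcal{W}=\mathcal{Z}\circ(-\times S^m)$ is a $4$-dimensional TQFT, and it is ordinary directly from Definition~\ref{def: ord}: the algebra $\mathcal{W}(S^{q-1}\times K)$ is $\mathcal{Z}(S^{q-1}\times(K\times S^m))$ with multiplication $\mu\times K\times S^m$, and $K\times S^m$ is again closed. This is exactly why ``ordinary'' quantifies over all $S^{q-1}\times K$. Proposition~\ref{prop: cant dist stab iso} applied to $\mathcal{W}$ then gives $\mathcal{Z}(\meight\times S^m)=\mathcal{W}(\meight)=\mathcal{W}(\msixt)=\mathcal{Z}(\msixt\times S^m)$, which is the paper's whole argument (Theorems~\ref{thm: main thm top}, \ref{thm: main thm FS}).

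Your displayed formula is just Propositions~\ref{prop:mult under conn sum} and~\ref{prop: torus invert} applied to the simple summands $\mathcal{W}_i$ of $\mathcal{W}$. There is one scalar $\mathcal{W}_i(S^2\times S^2)/\mathcal{W}_i(S^4)$ per summand; a single $\lambda$ is wrong for a non-simple theory. The ``fibrewise surgery along $S^2\times S^m$'' is simply the image under $-\times S^m$ of the $4$-dimensional decomposition $\varepsilon\circ\mu\circ(M'\sqcup N')$. So there is no delicate point and no induction on $m$.

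The route you sketch instead does not work:
\begin{enumerate}
\item For $m\geq 2$, $X\times D^m$ is not a cylinder. So $\mathcal{Z}(X\times S^m)$ is a pairing of the vector $\mathcal{Z}(X\times D^m)$ with its reverse, not a function of the space $\mathcal{Z}(X\times S^{m-1})$ plus a trace. Already $\mathcal{Z}(S^n)=\mathcal{Z}(\varepsilon)\mathcal{Z}(\upsilon)(1)$ is not determined by the vector space $\mathcal{Z}(S^{n-1})$.
\item For $m=1$, reducing to $\dim\mathcal{Z}(X)=\dim\mathcal{Z}(X')$ is circular, since that equality \emph{is} $\mathcal{W}(X)=\mathcal{W}(X')$ for the $S^1$-reduction.
\item ``$\mathcal{Z}(S^2\times S^2)\neq 0$'' does not parse for a $(4+m)$-dimensional $\mathcal{Z}$. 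What you need is invertibility of $\mathcal{W}_i(S^2\times S^2)$ for the simple summands of $\mathcal{W}$, whose existence rests on semisimplicity of $\mathcal{W}(S^3)=\mathcal{Z}(S^3\times S^m)$.
\end{enumerate}

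Your claim that the argument runs ``uniformly'' in $\smcat{n}$ is a second gap. By Remark~\ref{rem: donaldson}, $\meight$ and $\msixt$ are not smoothable. So they are not objects or morphisms of any smooth bordism category, no smooth $-\times S^m$ is defined on them, and their stable homeomorphism is purely topological. Knowing $\ks(X\times S^m)=0$ says the products can be smoothed, but it gives no relation between a smooth $(4+m)$-dimensional theory and any $4$-dimensional theory in which these manifolds live. In particular your trace formula is meaningless in $\smcat{5}$, where $\meight$ is not an object.

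The paper handles this by introducing FS manifolds: topological manifolds with a smooth microbundle refining the tangent microbundle. $\meight$ and $\msixt$ admit such structures since $\ks=0$, they are stably FS-isomorphic, and $-\times S^m:\fscat{4}\to\fscat{4+m}$ is defined. Kirby--Siebenmann smoothing theory plus concordance-implies-isotopy then give $\smcat{n}\simeq\fscat{n}$ for $n\geq 6$ (Theorem~\ref{thm: bord equiv}), through which $\mathcal{Z}$ is pulled back. In dimension $5$, where essential surjectivity fails, only full faithfulness of $\mathscr{F}_5$ is used, to build $\mathscr{G}:\fscat{4}\to\smcat{5}$.

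Two smaller repairs in your first step:
\begin{enumerate}
\item Stabilization does not preserve the forms on $H^2$ merely ``modulo hyperbolic summands.'' That would be useless, since $E_8\oplus E_8\oplus H\cong E_{16}\oplus H$. It leaves them untouched because it occurs in degree $(4+m)/2\geq 3$ (Lemma~\ref{lem: diff int not stab htp}).
\item For $m=2$, the K\"unneth copy of $H^2(X)$ is not homotopy invariant, and the cubic form vanishes on it. You need definiteness to see that the square-zero classes are exactly $\mathbb{Z}\sigma$. Then $q$ is recovered from $(a,b)\mapsto\langle ab\sigma,[X\times S^2]\rangle$ on the quotient.
\end{enumerate}
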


\subsection{Ordinary TQFTs and Summary of Previous Results}

We will use the notation "$M: X \rightsquigarrow Y$" to denote a bordism $M$ with source $X$ and target $Y$. Applying an $n$-dimensional TQFT $\mathcal{Z}$ to the bordisms $D^n\setminus \LL D^n\sqcup D^n\RR:S^{n-1}\sqcup S^{n-1} \rightsquigarrow S^{n-1}$ and $D^n:\varnothing\rightsquigarrow S^{n-1}$ yields an algebra structure on $\mathcal{Z}\LL S^{n-1}\RR$ that is commutative for $n\geq 2$. We will refer to these bordisms as $\mu$ and $\upsilon$, respectively, since they correspond to the multiplication and unit maps of the algebra.
\begin{center}
\begin{tikzpicture}[scale=0.8]
\draw[thick] (-6,1) ellipse (0.25cm and 0.5cm);
\draw[thick] (-6,-1) ellipse (0.25cm and 0.5cm);
\draw[thick] (-4,0) ellipse (0.25cm and 0.5cm);
\draw[thick] (-6,1.5) to[out=0, in=180] (-4,0.5);
\draw[thick] (-6,-1.5) to[out=0, in=180] (-4,-0.5);
\draw[thick] (-6,0.5) to[out=0, in=90] (-5.5,0) to[out=-90, in=0] (-6,-0.5);
\draw[thick][|->] (-3.25,0) -- (-1.25,0);
\node at (3,0) {$\Big(\mathcal{Z}\left(S^{n-1}\right)\otimes \mathcal{Z}\LL S^{n-1}\RR \to \mathcal{Z}\LL S^{n-1}\RR\Big)$};
\node at (-2.25, 0.5) {$\mathcal{Z}$};
\draw[thick] (-4,-3) ellipse (0.25cm and 0.5cm);
\draw[thick] (-4,-2.5) to[out=180, in=90] (-5,-3) to[out=-90, in=180] (-4,-3.5);
\draw[thick][|->] (-3.25,-3) -- (-1.25,-3);
\node at (-5,0) {$\mu$};
\node at (-4.5,-3) {$\upsilon$};
\node at (-2.25, -2.5) {$\mathcal{Z}$};
\node at (1,-3) {$\Big(\mathbb{C}\to \mathcal{Z}\LL S^{n-1}\RR\Big)$};
\end{tikzpicture}
\end{center} 

\noindent Reversing the direction of $\mu$ and $\upsilon$ gives a coalgebra structure. Thus, $\mathcal{Z}\LL S^{n-1}\RR$ is a Frobenius algebra.

These algebra structures may be extended to a much larger class of manifolds. We do this using a technique known as dimensional reduction, which is a method of obtaining a $q$-dimensional TQFT from an $n$-dimensional one by "reducing along" some closed $(n-q)$-manifold. Let $\mathcal{Z}:\bord_n\to\vectc$ be an $n$-dimensional TQFT, and let $K$ be a closed $(n-q)$-manifold.

\begin{definition}
    The \textbf{dimensional reduction of} $\bm{\mathcal{Z}}$ \textbf{along} $\bm{K}$ is the $q$-dimensional TQFT $\mathcal{W}$ defined as the composition below. \[\mathcal{W}:\bord_{q}\xrightarrow{-\times K} \bord_n \xrightarrow{\mathcal{Z}}\vectc\]
\end{definition}

Since $\mathcal{W}(S^{q-1})=\mathcal{Z}(S^{q-1}\times K)$, the Frobenius algebra structure on $\mathcal{W}(S^{q-1})$ immediately gives rise to one on $\mathcal{Z}(S^{q-1}\times K)$. Note that, if $\mu$ and $\upsilon$ are the $q$-dimensional bordisms representing the multiplication and unit for $\mathcal{W}(S^{q-1})$, the $n$-dimensional analogues for $\mathcal{Z}(S^{q-1}\times K)$ are simply $\mu\times K$ and $\upsilon\times K$.

\begin{definition} \label{def: ord}
    An $n$-dimensional TQFT $\mathcal{Z}$ is \textbf{ordinary} if $\mathcal{Z}(S^{q-1}\times K)$ is a semisimple algebra for any closed $\LL n-q\RR$-dimensional manifold $K$, for any $q$, $1\leq q\leq n$. 

\end{definition}

In \cite{rsp}, Reutter and Schommer-Pries proved that ordinary TQFTs yield stable isomorphism invariants.

\begin{definition} \label{def: stab iso}
    \sloppy Two $2k$-dimensional manifolds $M$ and $N$ are \textbf{stably isomorphic} if, for some $p$, $M \#^p \LL S^k\times S^k\RR \cong N \#^p \LL S^k\times S^k\RR$, where "$\#^p\LL S^k\times S^k\RR$" denotes the connected sum with $p$ copies of $\LL S^k\times S^k\RR$. 
\end{definition}

\begin{theorem}[{\cite[Theorem~A]{rsp}}\footnote{The $4$-dimensional version of this result appeared earlier in \cite{reutter} as Theorem A.}]
\label{thm: rsp A}
    Stably isomorphic even-dimensional manifolds cannot be distinguished by ordinary TQFTs. 
\end{theorem}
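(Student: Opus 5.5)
We plan to show that for an ordinary $2k$-dimensional TQFT $\mathcal{Z}$ the value $\mathcal{Z}(M)$ of a closed $2k$-manifold is unchanged by connected sum with $S^k\times S^k$. Then $\mathcal{Z}(M)=\mathcal{Z}(M\#^p(S^k\times S^k))=\mathcal{Z}(N\#^p(S^k\times S^k))=\mathcal{Z}(N)$ whenever $M$ and $N$ are stably isomorphic. Strictly speaking, invariance might only hold after some normalization, or only when $\mathcal{Z}$ does not vanish on the relevant manifolds. So the real target is an equation of the form $\mathcal{Z}(M\#(S^k\times S^k))=\lambda\,\mathcal{Z}(M)$ with $\lambda$ independent of $M$. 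We then aim to show $\lambda$ is invertible, or to handle the degenerate case separately.

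\textbf{Step 1: reduce to a statement about the algebra $A=\mathcal{Z}(S^{2k-1})$.} Write $M=M^\circ\cup D^{2k}$, so that $\mathcal{Z}(M)=\mathcal{Z}(M^\circ)(u)$, where $u=\mathcal{Z}(\upsilon)(1)$ is the unit of $A$. Connected sum with $S^k\times S^k$ amounts to replacing the disk by the punctured $(S^k\times S^k)^\circ$. Hence $\mathcal{Z}(M\#(S^k\times S^k))=\mathcal{Z}(M^\circ)(h)$, where $h\in A$ is the vector defined by the bordism $\varnothing\rightsquigarrow S^{2k-1}$ given by $(S^k\times S^k)\setminus D^{2k}$. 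It therefore suffices to compare $h$ with $u$, and the crucial claim is that $h$ is a scalar multiple of the unit up to elements that every $\mathcal{Z}(M^\circ)$ kills.

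\textbf{Step 2: use the handle decomposition of $(S^k\times S^k)^\circ$.} This manifold is $D^{2k}$ with two $k$-handles attached along a Hopf-type link of $S^{k-1}$'s. A $k$-handle attachment cobordism factors through $S^{k-1}\times D^{k+1}$ glued to $D^k\times S^{k-1}$. This identifies $h$ as obtained from $u$ by applying operators built from $S^{k-1}\times(\cdot)$ and $(\cdot)\times S^{k-1}$ shapes. Here dimensional reduction enters: $\mathcal{Z}(S^{k-1}\times S^{k})$ is the algebra of the reduction of $\mathcal{Z}$ along $S^{k}$ (or along $S^{k-1}$), and ordinarity says these algebras are semisimple. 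By Wedderburn they are products of matrix algebras (commutative, so products of $\mathbb{C}$), each with a basis of orthogonal idempotents $e_i$ and a Frobenius form. Cutting $S^k\times S^k$ along $S^{k-1}\times S^k$ expresses $h$ through a sum $\sum_i \theta_i^{-1}\, e_i\otimes e_i$, which is the image of the copairing. Tracing a general $M$ through the same cut gives $\mathcal{Z}(M\#(S^k\times S^k))=\sum_i(\cdots)$ with idempotent-wise factors. The goal is to show that each summand factorizes, so that the result is $\chi\cdot\mathcal{Z}(M)$, with $\chi$ essentially $\dim\mathcal{Z}(S^{k}\times S^{k-1})$ or a similar "handle" constant.

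\textbf{Step 3: handle $\lambda=0$.} If $\lambda$ vanishes, we plan to show that $\mathcal{Z}(M)=0$ for all $M$ in the relevant bordism class, again using semisimplicity to split off the idempotent summands. Alternatively, we follow the strategy of \cite{rsp} and compare $M\#(S^k\times S^k)$ with $M\#(S^k\times S^k)\#(S^k\times S^k)$ rather than with $M$, inducting so that all comparisons occur after at least one stabilization.

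The main obstacle is Step 2: proving that the vector $h$ is central and proportional to $u$ in the relevant quotient. This requires that the semisimple structure of the reduced algebras interacts correctly with the surgery description. We would first attempt it via the "handle element" of a semisimple commutative Frobenius algebra, which is invertible precisely because of semisimplicity, computing it in the idempotent basis.
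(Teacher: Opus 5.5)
Your Step 1 is sound; it is the gluing computation behind the paper's multiplicativity formula. The proposal nevertheless has two genuine gaps.

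\textbf{The target identity is false.} The identity $\mathcal{Z}(M\#(S^k\times S^k))=\lambda\,\mathcal{Z}(M)$ with $\lambda$ independent of $M$ fails for non-simple ordinary theories. The same goes for the equivalent claim that $h\in\mathbb{C}u$ modulo the common kernel of the functionals $\mathcal{Z}(M^\circ)$. Take the direct sum of the trivial theory and the invertible Euler theory $M\mapsto c^{\chi(M)}$ with $c^2\neq 1$; both summands are ordinary. Then $\mathcal{Z}(M)=1+c^{\chi(M)}$ while $\mathcal{Z}(M\#(S^k\times S^k))=1+c^{\chi(M)+2(-1)^k}$, so the ratio depends on $M$.

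The right intermediate statement is that $h$ is \emph{invertible} in the semisimple algebra $A$, and this suffices. An isomorphism $M\#^p(S^k\times S^k)\cong N\#^p(S^k\times S^k)$ can be isotoped to match the removed disks. Hence $\mathcal{Z}(M^\circ)(h^px)=\mathcal{Z}(N^\circ)(h^px)$ for all $x\in A$, and taking $x=h^{-p}$ gives $\mathcal{Z}(M)=\mathcal{Z}(N)$. The paper does the equivalent thing: it splits $\mathcal{Z}$ into simple summands (Corollary 5.11 of Reutter--Schommer-Pries) and shows $\mathcal{Z}(S^k\times S^k)\neq 0$ in each.

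\textbf{No working argument for invertibility.} This is the more serious gap, and invertibility is the heart of the theorem. Your proposed mechanism does not work. $S^{k-1}\times S^k$ \emph{separates} $S^k\times S^k$ into two copies of $D^k\times S^k$. Cutting there only gives $\mathcal{Z}(S^k\times S^k)=\epsilon_B(1_B)=\sum_j\epsilon_B(e_j)$ with $B=\mathcal{Z}(S^{k-1}\times S^k)$, and semisimplicity gives no reason for this sum to be nonzero. A copairing, and hence the invertible handle element, only arises from a non-separating cut. For $k\geq 2$, $S^k\times S^k$ has no non-separating hypersurface, since $H^1=0$. Likewise $\dim\mathcal{Z}(S^k\times S^{k-1})=\mathcal{Z}(S^1\times S^k\times S^{k-1})$ is not the stabilization factor; in a simple theory that factor is $\mathcal{Z}(S^{2k})^{-1}\mathcal{Z}(S^k\times S^k)$.

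The missing idea is the bordism identity the paper uses:
\[(W_{S^k}\times S^{k-1})\circ(S^k\times D^k)\cong(S^1\times S^{2k-1})\#(S^k\times S^k)\#(S^k\times D^k),\qquad W_{S^k}=(S^k\times S^1)\setminus(D^{k+1}\sqcup D^{k+1}).\]
In a simple theory, the vector $\mathcal{Z}(S^k\times D^k)$ is nonzero, because it pairs with $\mathcal{Z}(D^{k+1}\times S^{k-1})$ to give $\mathcal{Z}(S^{2k})=\epsilon(1)\neq 0$. The identity makes it an eigenvector of $\mathcal{Z}(W_{S^k}\times S^{k-1})$ with eigenvalue $\mathcal{Z}(S^{2k})^{-2}\mathcal{Z}(S^1\times S^{2k-1})\mathcal{Z}(S^k\times S^k)$. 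That operator is multiplication by the handle element of $\mathcal{Z}(S^k\times S^{k-1})$. This algebra is semisimple because it is the value on $S^k$ of the reduction along $S^{k-1}$. So the operator is invertible, the eigenvalue is nonzero, and in particular $\mathcal{Z}(S^k\times S^k)\neq 0$. Your instinct to use the handle element is right, but it has to be applied in $\mathcal{Z}(S^k\times S^{k-1})$ through this identity.

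This is exactly where ordinarity enters, and it makes your Step 3 both unnecessary and unworkable. The degenerate case never occurs. There is no mechanism forcing $\mathcal{Z}(M)=0$ when the factor vanishes; indeed $\mathcal{Z}(S^{2k})\neq0$ in any simple theory. Comparing further stabilizations never recovers the unstabilized values.
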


\noindent Furthermore, they showed that, in the smooth category, these invariants are complete for closed manifolds that satisfy the following finiteness condition.

\begin{definition}[{\cite{rsp}}]
    A $2k$-dimensional smooth manifold $M$ has $\bm{\pi}$\textbf{-finite tangential} $\bm{\LL k-1\RR}$\textbf{-type} if $\pi_i F$ is finite for all $i \leq k-1$ for all basepoints, where $F$ is the homotopy fiber of a map $M\to \bon{2k}$ classifying the tangent bundle of $M$.
\end{definition}

\begin{theorem}
    [{\cite[Theorem~B]{rsp}}]
    \label{thm: rsp iff}
    Connected, closed, smooth $2k$-manifolds with $\pi$-finite tangential $(k-1)$-type are stably diffeomorphic if and only if they are indistinguishable by ordinary TQFTs.
\end{theorem}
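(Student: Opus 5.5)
The backward direction, that stably diffeomorphic manifolds are indistinguishable, is exactly Theorem \ref{thm: rsp A}. So the content is the forward direction: if $M$ and $N$ are not stably diffeomorphic, I would exhibit an ordinary TQFT $\mathcal{Z}$ with $\mathcal{Z}(M)\neq\mathcal{Z}(N)$. The plan has three parts. First, rephrase stable diffeomorphism through Kreck's modified surgery. Second, build a supply of finite-gauge-type (twisted sigma model) TQFTs out of the data appearing there. Third, show these theories are ordinary and that, taken together, they separate the relevant bordism classes.

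\emph{Step 1 (Kreck).} Let $B\to \bon{2k}$ be the tangential $(k-1)$-type of $M$. This is the Moore--Postnikov stage through which $M\to\bon{2k}$ factors $k$-connectedly, so its fiber $F$ has $\pi_iF$ finite for $i\le k-1$ and vanishing above. By Kreck's theorem, connected closed $2k$-manifolds $M,N$ admitting $k$-connected $B$-structures (tangential $(k-1)$-smoothings) are stably diffeomorphic if and only if some such structures $\bar\nu_M,\bar\nu_N$ give the same class in $\Omega_{2k}(B)$. Hence:
\begin{itemize}
\item if $N$ has a different tangential $(k-1)$-type, $M$ and $N$ are already separated by a coarser invariant;
\item otherwise, the two subsets $S_M,S_N\subset \Omega_{2k}(B)$ of classes of $(k-1)$-smoothings are disjoint.
\end{itemize}

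\emph{Step 2 (the TQFTs).} Fix a $\pi$-finite space $F'$ and a fibration $B'\to\bon{2k}$ with fiber $F'$, together with a character $\chi:\Omega_{2k}(B')\to\mathbb{C}^\times$ that is a cobordism-invariant bordism character. The bordism group is finitely generated, and I would take $\chi$ factoring through a finite quotient. Define $\mathcal{Z}_{B',\chi}$ by summing, with homotopy-cardinality weights, over $B'$-structures on each manifold. On closed $M$ this gives
\[ \mathcal{Z}_{B',\chi}(M)=\sum_{[\xi]}\frac{\chi([M,\xi])}{|\mathrm{Aut}(\xi)|}. \]
The fact that $F'$ is $\pi$-finite makes the space of lifts of the tangent classifier homotopy-finite, so this is well defined; on bordisms I would use the usual finite path-integral (pull-push) construction. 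I would then prove ordinarity. The algebra $\mathcal{Z}(S^{q-1}\times K)$ is a twisted groupoid algebra, in the style of Dijkgraaf--Witten, of the finite groupoid of $B'$-structures on $K\times S^{q-1}$ modulo the loop action. It carries a positive-definite invariant inner product coming from the unitary character $\chi$, and a finite-dimensional $*$-algebra with such a form is semisimple.

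\emph{Step 3 (separation), the main obstacle.} The value $\mathcal{Z}_{B,\chi}(M)$ sums over \emph{all} $B$-structures on $M$, not only the $k$-connected ones. Varying $\chi$ over characters of a finite quotient of $\Omega_{2k}(B)$ would by itself recover, by Fourier inversion, the weighted count of structures landing in each bordism class. The difficulty is that non-$k$-connected structures contaminate these counts. To remove them I would run a M\"obius inversion over the poset of ``sub-types'' of $B$: the fibrations $B''\to B$ that factor non-surjective lifts through smaller $\pi$-finite truncations. A structure that fails to be $k$-connected factors through such a $B''$. Its contribution is therefore already computed by the invariants $\mathcal{Z}_{B'',\chi''}$ of $M$, which take the same value on $M$ and $N$ by induction on the size of $F$. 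Subtracting these contributions leaves a nonnegative weighted count of $(k-1)$-smoothings in each class of the finite quotient. That count is positive on $S_M$ and zero on $S_N$, for a quotient fine enough to separate the finitely many relevant orbits; I would find such a quotient using finite generation together with residual finiteness.

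This yields some ordinary $\mathcal{Z}$ with $\mathcal{Z}(M)\neq\mathcal{Z}(N)$. I expect the real work to be making the inversion and the finiteness of the relevant orbit set rigorous.
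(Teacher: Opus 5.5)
The paper gives no proof of this statement; it is quoted from \cite{rsp}. So I can only assess your outline on its own terms.

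\textbf{What works.} Steps 1 and 2 set things up reasonably. Kreck's theorem reduces the problem to bordism classes of $k$-connected lifts. A finite path integral with a unitary twist is unitary, hence ordinary by Proposition~\ref{prop: unitary are ord}. Since $S_M$ and $S_N$ are finite, characters of a finite quotient of $\Omega_{2k}(B)$ can separate them.

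\textbf{The gap in Step 3.} The converse lives entirely in Step 3, and the mechanism you propose there does not work as described. Hall's M\"obius inversion for groups relies on two facts: a non-surjective homomorphism has a unique image subgroup, and factoring through a subgroup is a property. For $\pi$-finite spaces over $\mathrm{BO}(2k)$ neither fact holds.
\begin{itemize}
\item A map $B''\to B$ is not a monomorphism. The space of factorizations of a lift through it therefore has a homotopy cardinality that depends on the source, via cohomology of $M$ with coefficients in the homotopy of the fiber of $B''\to B$.
\item The canonical factorizations available, the Moore--Postnikov stages, carry the \emph{source's} lower homotopy groups. They are not drawn from a fixed poset of sub-objects of $B$.
\item A lift can factor through incomparable sub-objects with the same homotopy image. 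For example, a map $S^1\to K(\mathbb{Z}/2,1)\times K(\mathbb{Z}/2,2)$ that is nontrivial on $\pi_1$ factors through both sections $x\mapsto(x,0)$ and $x\mapsto(x,x^2)$. Their homotopy pullback $K(\mathbb{Z}/4,1)$ is not a sub-object at all.
\end{itemize}
So ``its contribution is already computed by $\mathcal{Z}_{B'',\chi''}(M)$'' gives no subtraction formula. What you actually need is a finite linear combination of sigma-model partition functions, \emph{independent of the manifold}, equal to the $\chi$-weighted count of $k$-connected lifts. The ``induction on the size of $F$'' is also not the right logic: agreement of the lower invariants on $M$ and $N$ is simply the hypothesis, and if it fails you are already done.

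\textbf{Step 1 needs the same lemma.} Your claim that manifolds of different tangential $(k-1)$-types are ``separated by a coarser invariant'' names no TQFT. Suppose weighted counts of lifts that are bijective (or surjective) on homotopy groups of fibers are known to be TQFT invariants. Then a Lov\'asz-type comparison with targets $B_M$ and $B_N$ gives $B_M\simeq B_N$, after which your twisted counts separate $S_M$ from $S_N$.

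\textbf{A possible route.} Because the fiber of $B\to\mathrm{BO}(2k)$ is $(k-1)$-truncated, lifts of $\tau_M$ to $B$ are the same as maps $B_M\to B$ over $\mathrm{BO}(2k)$. Under this correspondence, $k$-connected lifts are exactly the equivalences. The inversion must then be organized one Postnikov stage at a time. At each stage, restrict to maps already bijective in lower degrees; there the next Moore--Postnikov image is a genuine, unique sub-object. You must also check that every weight entering the formula depends only on data shared by $M$ and $N$. None of this appears in your sketch, and it is where the proof actually lies.

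\textbf{Two smaller points.}
\begin{itemize}
\item A character of the closed bordism group must first be promoted to an invertible twist on $B'$-structured bordisms before pull-push makes sense. In $\mathrm{Vect}_{\mathbb{C}}$ this forces the character to be trivial on the image of $\eta$, i.e.\ on mapping tori of the swap of $Y\sqcup Y$. The Arf theory, for instance, exists only in super vector spaces.
\item $\Omega_{2k}(B)$ must be unstable tangential bordism, which remembers the Euler characteristic. Otherwise Kreck only yields $M\#^r(S^k\times S^k)\cong N\#^s(S^k\times S^k)$ with possibly $r\neq s$.
\end{itemize}
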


Isomorphism clearly implies stable isomorphism. In the smooth case, Kreck showed in {\cite[Theorem~D]{kreck}} that, surprisingly, stable diffeomorphism also implies diffeomorphism for simply connected manifolds in dimension $4n+2$. Thus, the simply connected $(4n+2)$-dimensional version of the above theorem can be reformulated as follows.

\begin{proposition} [{\cite{rsp}}]
\label{prop: pi finite k odd}
    When $k$ is odd, ordinary TQFTs distinguish diffeomorphism classes of smooth, simply connected, closed $2k$-manifolds with $\pi$-finite tangential $(k-1)$-type. 
\end{proposition}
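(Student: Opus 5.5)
The plan is to combine Theorem \ref{thm: rsp iff} with Kreck's result \cite[Theorem~D]{kreck} that stable diffeomorphism implies diffeomorphism for simply connected closed smooth manifolds of dimension $4n+2$. When $k$ is odd, write $k=2n+1$, so that $2k=4n+2$.

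One direction is immediate. Diffeomorphic manifolds give isomorphic objects and morphisms in $\smcat{2k}$, so every TQFT assigns them the same invariants. The substance is the converse: if $M$ and $N$ are smooth, simply connected, closed $2k$-manifolds with $\pi$-finite tangential $(k-1)$-type, and no ordinary TQFT distinguishes them, then $M\cong N$.

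For the converse I proceed in three steps.
\begin{enumerate}
\item Since $M$ and $N$ are simply connected, they are in particular connected. They are closed, smooth and $\pi$-finite by hypothesis, so Theorem \ref{thm: rsp iff} applies. Indistinguishability by ordinary TQFTs therefore gives $M\#^p(S^k\times S^k)\cong N\#^p(S^k\times S^k)$ for some $p$.
\item Since $k$ is odd, $2k=4n+2$. Kreck's Theorem~D says that simply connected closed smooth $(4n+2)$-manifolds which are stably diffeomorphic (stabilized by $S^{2n+1}\times S^{2n+1}$) are diffeomorphic.
\item Applying this to $M$ and $N$ yields $M\cong N$.
\end{enumerate}

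The main obstacle is checking that the hypotheses of Kreck's theorem match this setting. His cancellation statement is formulated via normal $(k-1)$-smoothings in a fixed normal $B$-structure, so I must make sure two things hold:
\begin{itemize}
\item The stabilization by $S^k\times S^k$ in Definition \ref{def: stab iso} agrees with Kreck's notion of stable diffeomorphism. Orientation conventions matter here, but connected sum with $S^k\times S^k$ is insensitive to them.
\item For simply connected manifolds in dimension $4n+2$, cancellation holds without further hypotheses. The obstruction in Kreck's setup lives in an $\ell$-monoid that vanishes for trivial fundamental group when $k$ is odd, because quadratic forms over $\mathbb{Z}$ of odd type cancel hyperbolic summands.
\end{itemize}
I would quote \cite[Theorem~D]{kreck} directly for the second point rather than re-derive it. I would also note that simple connectivity is used only in this cancellation step; Theorem \ref{thm: rsp iff} requires only connectedness.
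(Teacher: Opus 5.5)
Your proposal is correct and follows the paper's argument. The paper obtains this proposition by combining Theorem~\ref{thm: rsp iff} (ordinary TQFTs detect stable diffeomorphism for connected, closed, smooth manifolds with $\pi$-finite tangential $(k-1)$-type) with Kreck's cancellation theorem \cite[Theorem~D]{kreck} for simply connected closed $(4n+2)$-manifolds; your explanation of why Kreck's cancellation holds is only heuristic, but since you cite Theorem~D directly, the proof does not depend on it.
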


    It is worth noting that stable isomorphism is not strictly stronger or weaker than homotopy equivalence. On the one hand, there are families of smooth $4$-manifolds presented by Teichner in \cite{teichner} that have $\pi$-finite tangential $1$-type and are pairwise homotopy equivalent but pairwise not stably diffeomorphic. On the other hand, Kreck and Schafer observed in \cite{kreckandschafer} that Wall's work \cite{wall} implies that, for $n\geq 2$, there are families of $(2n-1)$-connected $4n$-manifolds that are pairwise stably diffeomorphic but are pairwise not homotopy equivalent. This is also true in $4$ dimensions in the topological category, which follows from Freedman's theorem \cite{freedmanquinn}. More examples of pairs of manifolds that are stably diffeomorphic but not homotopy equivalent were presented recently in \cite{conway1} and \cite{conway2}. So, ordinary TQFTs sometimes cannot see homotopy type, but they can sometimes see \emph{beyond} homotopy type.

In \cite{reutter}, Reutter showed that ordinary $4$-dimensional TQFTs cannot distinguish smooth structures. A theorem of Gompf \cite{gompf} states that any two smoothings of a compact, oriented $4$-manifold are stably diffeomorphic. Theorem \ref{thm: rsp A} (\cite[Theorem A]{rsp}) then implies that orientation-preserving homeomorphic $4$-manifolds cannot be distinguished by ordinary TQFTs. In particular, exotic $4$-spheres cannot be detected. On the other hand, Reutter and Schommer-Pries showed in \cite[Proposition~1.2]{rsp} that, in dimensions $8k+1$ and $8k+2$ for $k\geq1$, there are pairs of exotic spheres that can be distinguished. In even dimensions $2k>6$, homotopy spheres are diffeomorphic if and only if they are stably diffeomorphic \cite{rei}, but they do not have $\pi$-finite tangential $(k-1)$-type, so Theorem \ref{thm: rsp iff} is not applicable. This leads to Question 1.3 in \cite{rsp} which asks whether the $\pi$-finiteness condition for even-dimensions can be relaxed and whether exotic spheres in all dimension greater than $4$ can be distinguished. This question was partially answered by Gripaios and Randal-Williams, who proved in \cite{milnorsphere} that $(4k-1)$-dimensional homotopy spheres that bound parallelizable manifolds -- such as the Milnor $7$-sphere -- are indistinguishable to TQFTs. 

This paper was motivated in particular by the question of weakening the $\pi$-finiteness condition in $6$ dimensions. A connected, smooth $6$-manifold has $\pi$-finite tangential $2$-type if it has finite $\pi_1$ and $\pi_2$.

\begin{question}
\label{ques:6D}
Does Theorem \ref{thm: rsp iff} hold in $6$ dimensions if we drop the requirement that $\pi_2$ be finite? 
\end{question}

We answer Question \ref{ques:6D} by showing that the finiteness condition on $\pi_2$ cannot be dropped. We produce a pair of simply connected, smooth $6$-manifolds with infinite $\pi_2$ which are not diffeomorphic (and therefore not stably diffeomorphic by \cite[Theorem~D]{kreck}), but are nevertheless indistinguishable by ordinary TQFTs. This can be extended to higher dimensions. For all $m\geq6$, we construct a pair of simply connected, smooth $m$-manifolds which are neither diffeomorphic nor stably diffeomorphic (if $m$ is even) but are indistinguishable to ordinary TQFTs.

Since this construction arose from an attempt to weaken the $\pi$-finiteness condition in $6$ dimensions, it is natural to ask the following question.

\begin{question}
    \label{higher d?}
    Are there more highly connected analogues of the above construction that prove the necessity of the $\pi$-finiteness conditions in higher degrees?
\end{question}

We partially answer this question with the following generalization: For all $n\geq 2$ and $m\geq2n$, there are pairs of $(2n-1)$-connected smooth $(4n+m)$-manifolds which are neither diffeomorphic nor stably diffeomorphic (if $m$ is even) but are indistinguishable by ordinary TQFTs (see Section \ref{sec: high conn}).

\subsection{Summary of Results and Organization of the Paper}

In Section \ref{sec simpletqft}, we present a proof of Theorem \ref{thm: rsp A} based on that of \cite[Proposition~3.2]{reutter}.

In Section \ref{sec: Top simp conn}, we introduce pairs of non-homotopy equivalent topological $(4+m)$-manifolds that are simply connected for $m>1$ and cannot be distinguished by ordinary TQFTs. They are constructed using two simply connected topological $4$-dimensional "building block" manifolds denoted, for now, $M^4$ and $N^4$. 

\begin{propositionalpha}[Proposition \ref{prop:88 16 product not htpy eq or stab iso}]
\label{prop: intro man}
The manifolds $\ssm{M^4}$ and $\ssm{N^4}$ are not homotopy equivalent. Furthermore, for $m$ even and nonzero, they are still not homotopy equivalent after stabilization.\footnote{Stabilization refers to taking a finite number of connected sums with $\LL S^\ell \times S^\ell \RR$, where $\ell=\frac{1}{2}(4+m)$.}
\end{propositionalpha}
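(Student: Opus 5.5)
The plan is to extract the intersection form of the $4$-dimensional building block from the integral cohomology ring of $\ssm{M^4}$, in a way that is invariant under ring isomorphisms. A homotopy equivalence induces a graded ring isomorphism on $H^*(-;\mathbb{Z})$. So it suffices to show that the rings of $\ssm{M^4}$ and $\ssm{N^4}$ (and of their stabilizations) are not isomorphic. The building blocks are the simply connected topological $4$-manifolds with even unimodular positive definite rank-$16$ intersection forms $E_8\oplus E_8$ and $E_{16}$ (Freedman). These forms are not isomorphic, even up to an overall sign: $E_8\oplus E_8$ is generated by its norm-$2$ vectors, while the norm-$2$ vectors of $E_{16}$ span an index-$2$ sublattice ($D_{16}$). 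An isomorphism with $-E_{16}$ is ruled out by definiteness.

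For $m\neq 2$ I use the K\"unneth theorem: $H^*(\ssm{M})=H^*(M)\otimes H^*(S^m)$ with $H^1(M)=H^3(M)=0$. In each such case $H^2(\ssm{M})=H^2(M)$, and the cup product $H^2\otimes H^2\to H^4$ has image in the rank-one direct summand $H^4(M)\otimes 1\cong\mathbb{Z}$. This needs a small check when $m=4$, where $H^4$ has rank two. Let $u$ generate the saturation of the image of this cup product, which is determined up to sign. Then $x\smile y=Q(x,y)\,u$ recovers the form $\pm Q$ intrinsically from the ring. A ring isomorphism would therefore give $E_8\oplus E_8\cong\pm E_{16}$, which is a contradiction.

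The case $m=2$ needs separate treatment, because there $H^2=H^2(M)\oplus\mathbb{Z}s$ and $H^4=H^4(M)\oplus H^2(M)s$. For $x=a+\lambda s$ one computes $x^2=a^2+2\lambda\,as$. Since $Q$ is definite, $x^2=0$ forces $a=0$. Hence the square-zero classes in $H^2$ are exactly $\mathbb{Z}s$, a subgroup defined purely in ring-theoretic terms. Passing to $H^2/\langle s\rangle\cong H^2(M)$ and $H^4/(s\smile H^2)\cong\mathbb{Z}$, the induced pairing is $\pm Q$. This subquotient is preserved by any ring isomorphism, so we again get a contradiction.

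For the stabilized statement, take $m$ even and nonzero, and form $\#^p(S^\ell\times S^\ell)$ with $\ell=2+m/2\geq 3$. The connected sum changes cohomology only in degree $\ell$, and all new products land in the top degree. When $\ell\notin\{2,4\}$, i.e.\ $m\neq 4$, degrees $2$ and $4$ and the cup product between them are untouched, so the argument above applies verbatim (this includes the $m=2$ argument with $\ell=3$). The main obstacle is $m=4$, where $\ell=4$ and $H^4$ gains a hyperbolic summand. Here I still expect the image of $H^2\otimes H^2\to H^4$ to lie in the original summand $H^4(M)\otimes 1$, since the new classes have trivial products with $H^2$. Its saturation is then still a rank-one direct summand of the enlarged $H^4$, so the same invariant $x\smile y=Q(x,y)u$ is well defined and recovers $\pm Q$. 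The key point to verify carefully is that the saturation is computed inside the correct lattice, so that $u$ is still a primitive generator and the recovered form is not rescaled.
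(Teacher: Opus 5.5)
Your proposal is correct and takes essentially the paper's route: recover $\pm Q$ from the integral cohomology ring, use definiteness to isolate the square-zero line $\mathbb{Z}s$ when $m=2$, and note that stabilization with $S^\ell\times S^\ell$ leaves the relevant degree-$2$ data intact. The $m=4$ point you flag resolves immediately, and more cleanly than the paper's one-line stabilization lemma, which glosses over it: because $Q$ is unimodular, the image of $H^2\otimes H^2\to H^4$ is exactly the cyclic group generated by $u$ itself, so you can take $u$ to be a generator of that image and no saturation or rescaling issue arises.
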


We note the obvious fact that Proposition \ref{prop: intro man} implies that $\ssm{M^4}$ and $\ssm{N^4}$ are neither isomorphic nor stably isomorphic (for $m$ even and nonzero). 

The manifolds $M^4$ and $N^4$ are not smoothable, so we first specialize to the topological category. By Freedman's Theorem \cite{freedmanquinn}, $M^4$ and $N^4$ are stably homeomorphic and, by Theorem \ref{thm: rsp A}, are thus indistinguishable to ordinary TQFTs. Using dimensional reduction along $S^m$, we prove our main result in the topological category.

\begin{theoremalpha}[Theorem \ref{thm: main thm top}]
    \label{thm: intro main top}
    Ordinary $(4+m)$-dimensional topological (non-smooth) TQFTs cannot distinguish $\ssm{M^4}$ and $\ssm{N^4}$.

\end{theoremalpha}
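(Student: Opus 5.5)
Let $\mathcal{Z}:\topcat{4+m}\to\vectc$ be an ordinary $(4+m)$-dimensional topological TQFT. The plan is to reduce the question to dimension $4$ by dimensional reduction along $S^m$ and then apply Theorem \ref{thm: rsp A} to the building blocks. Consider
\[\mathcal{W}:\topcat{4}\xrightarrow{-\times S^m}\topcat{4+m}\xrightarrow{\mathcal{Z}}\vectc .\]
The first thing to check is that $\mathcal{W}$ is again an ordinary TQFT. Being a composite of symmetric monoidal functors, it is a $4$-dimensional TQFT. For ordinarity, take $1\le q\le 4$ and a closed $(4-q)$-manifold $K$. Then $\mathcal{W}(S^{q-1}\times K)=\mathcal{Z}(S^{q-1}\times(K\times S^m))$. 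Its Frobenius algebra structure comes from $\mu\times K\times S^m$ and $\upsilon\times K\times S^m$, and these are exactly the bordisms that give the algebra structure on $\mathcal{Z}(S^{q-1}\times K')$ with $K'=K\times S^m$, a closed $(4+m-q)$-manifold. Since $\mathcal{Z}$ is ordinary, this algebra is semisimple, so $\mathcal{W}$ is ordinary. The whole argument here is a direct match with Definition \ref{def: ord}: the $q$ values $1,\dots,4$ for $\mathcal{W}$ correspond to the same $q$ for $\mathcal{Z}$.

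Next, $\mathcal{W}(M^4)=\mathcal{Z}(\ssm{M^4})$ and $\mathcal{W}(N^4)=\mathcal{Z}(\ssm{N^4})$, where closed manifolds are viewed as bordisms $\varnothing\rightsquigarrow\varnothing$, i.e.\ numbers. So it is enough to show that ordinary $4$-dimensional topological TQFTs do not distinguish $M^4$ from $N^4$. By Freedman's classification \cite{freedmanquinn}, simply connected closed topological $4$-manifolds are classified by their intersection form and Kirby--Siebenmann invariant. The building blocks $M^4$ and $N^4$ (with forms $E_8\oplus E_8$ and $E_{16}$, both even, unimodular, of rank $16$ and signature $16$) have Kirby--Siebenmann invariant $\sigma/8 \bmod 2=0$. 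After adding one copy of $S^2\times S^2$, i.e.\ summing with $H$, the forms become indefinite, even and unimodular with the same rank and signature, hence isomorphic: both equal $2(-E_8)^{\perp}$-type $\cong 2E_8\oplus H$. Freedman's theorem then gives $M^4\#(S^2\times S^2)\cong N^4\#(S^2\times S^2)$, so the two are stably homeomorphic. Theorem \ref{thm: rsp A}, in the form proved in Section \ref{sec simpletqft}, which I expect holds verbatim in the topological bordism category, then gives $\mathcal{W}(M^4)=\mathcal{W}(N^4)$.

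The main obstacle is making sure that Theorem \ref{thm: rsp A} and its proof apply to $\topcat{4}$ rather than only to smooth bordism. The proof via Reutter's argument uses only the semisimplicity of $\mathcal{W}(S^1\times S^2)$ (reduction along $K=S^2$ with $q=2$, or the analogous $S^{k}\times S^{k-1}$ algebra) together with a cut-and-paste decomposition of $M\#(S^2\times S^2)$ along $S^3$ and $S^1\times S^2$. I would check that every bordism used there (the connected-sum bordisms, and $S^2\times D^2$ glued along $S^1\times S^2$) exists in the topological category, which it does since all the pieces are standard smooth pieces. Since ordinarity of $\mathcal{W}$ was verified above, the argument transfers. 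Combining the two steps gives $\mathcal{Z}(\ssm{M^4})=\mathcal{Z}(\ssm{N^4})$.
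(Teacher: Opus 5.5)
Your proof is correct and follows the paper's own route. You reduce along $S^m$, use Freedman to see that $M^4$ and $N^4$ become homeomorphic after adding one $S^2\times S^2$ summand, and apply Proposition~\ref{prop: cant dist stab iso} to the reduced theory $\mathcal{W}$; your explicit check that $\mathcal{W}$ is again ordinary fills in a step the paper leaves implicit.

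One small slip, which affects nothing: the invertibility step in Proposition~\ref{prop: torus invert} uses semisimplicity of $\mathcal{W}(S^2\times S^1)$ with the algebra structure from $q=3$, $K=S^1$, not from $q=2$, $K=S^2$. Since you verified every ordinarity condition, this does not matter.
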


In Section \ref{sec:smoothing background}, we provide some background in smoothing theory. In \ref{ss:microbundles}, we review microbundles. In \ref{ss:KS background}, we discuss classifying spaces for microbundles and a major smoothing theory result of Kirby and Siebenmann (Theorem \ref{thm:KS10.1 sm version}). This result states that, in high dimensions, the smoothability of a topological manifold is equivalent to the existence of a vector bundle refinement of its tangent microbundle.

Section \ref{sec:FS} considers manifolds with bundle-theoretic smoothing data called formally smooth (FS) manifolds. High-dimensional FS manifolds are smoothable, but, since smoothing theory fails in low dimensions, there are $4$-manifolds which have FS structures but no smooth structures, including our $4$-dimensional building blocks referenced above. In \ref{ss: FS bord}, we discuss operations on and bordisms between FS manifolds. In \ref{ss:FS TQFTs}, we introduce a new bordism category of FS manifolds, denoted $\fscat{n}$. We show that our topological counter-examples $\ssm{M^4}$ and $\ssm{N^4}$ from Section \ref{sec: Top simp conn} have FS structures and extend our main theorem to the FS category.

\begin{theoremalpha}[Theorem \ref{thm: main thm FS}]
\label{thm: intro FS main}
      Ordinary $(4+m)$-dimensional FS TQFTs cannot distinguish $\ssm{M^4}$ and $\ssm{N^4}$.
\end{theoremalpha}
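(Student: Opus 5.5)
The plan follows the proof of Theorem \ref{thm: intro main top}: dimensional reduction along $S^m$ in the FS bordism category, combined with the FS analogue of Theorem \ref{thm: rsp A} for the $4$-dimensional building blocks. Let $\mathcal{Z}:\fscat{4+m}\to\vectc$ be an ordinary FS TQFT. Since $S^m$ is smooth, it carries a canonical FS structure. We first check that $-\times S^m$ gives a symmetric monoidal functor $\fscat{4}\to\fscat{4+m}$. On tangent microbundles, $\tau(X\times S^m)=\tau X\times\tau S^m$, so a vector bundle refinement of $\tau X$ together with the smooth structure of $S^m$ yields a refinement of the product. This is compatible with gluing, so
\[
\mathcal{W}:\fscat{4}\xrightarrow{-\times S^m}\fscat{4+m}\xrightarrow{\mathcal{Z}}\vectc
\]
is a $4$-dimensional FS TQFT. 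It is ordinary, because $\mathcal{W}(S^{q-1}\times K)=\mathcal{Z}(S^{q-1}\times (K\times S^m))$ is semisimple by the ordinarity of $\mathcal{Z}$ applied to the closed manifold $K\times S^m$.

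Next we show that $M^4$ and $N^4$, with the FS structures used in Section \ref{sec:FS}, are stably isomorphic as FS manifolds. By Freedman they are stably homeomorphic: $M^4\#^p(S^2\times S^2)\cong N^4\#^p(S^2\times S^2)$. We must upgrade this homeomorphism $h$ to an FS isomorphism, i.e. arrange that $h$ pulls back the FS structure on the right to one concordant to that on the left. FS structures on a $4$-manifold $X$ (lifts of $X\to\BTOP(4)$ to $\bo(4)$, or stably to $\bo$) are, up to concordance, governed by obstruction theory with coefficients in $\pi_*(\TOP/\oo)$. In dimension $\le 4$ the only relevant group is $\pi_3(\TOP/\oo)=\mathbb{Z}/2$, so for simply connected $X$ the lifts form a torsor over $H^3(X;\mathbb{Z}/2)=0$. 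The lift is therefore essentially unique, provided we work stably; if the FS structure is unstable, we also have to control the fiber $\oo/\oo(4)$ and $\TOP/\TOP(4)$ issues in this range. Given uniqueness, $h$ is automatically compatible with the FS structures after concordance. This step is the main obstacle. I would try first to reduce to the stable statement by showing that the definition of $\fscat{n}$ in Section \ref{sec:FS} only depends on stable refinements up to concordance, and then apply $H^3(M^4\#^p(S^2\times S^2);\mathbb{Z}/2)=0$.

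Finally, we need the FS version of Theorem \ref{thm: rsp A}. Its proof, recalled in Section \ref{sec simpletqft} following \cite{reutter}, uses only the semisimple algebra $\mathcal{W}(S^3)$ and the fact that $\mathcal{W}(X\#Y)$ is computed by pairing the vectors of the punctured manifolds through this algebra. The argument is formal in the bordism category, so it applies verbatim to $\fscat{4}$. This requires only that connected sum with $S^2\times S^2$ and the punctured-sphere bordisms $\mu,\upsilon$ be FS bordisms, which holds since they are smooth. Hence $\mathcal{W}(M^4)=\mathcal{W}(N^4)$, i.e. $\mathcal{Z}(\ssm{M^4})=\mathcal{Z}(\ssm{N^4})$, with the FS structures on the products being the product structures constructed in the first step.
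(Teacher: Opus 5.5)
Your argument is essentially the paper's. The paper makes $-\times S^m$ a functor $\fscat{4}\to\fscat{4+m}$ via Remark \ref{rem: smooth has canon FS} and Proposition \ref{prop: product of FS}. It gets stable FS isomorphism of the building blocks by transporting FS structures along Freedman's stable homeomorphism (Proposition \ref{prop: homeo is fs iso}) and using uniqueness of FS structures when $H^3(-;\mathbb{Z}/2)=0$ (Corollary \ref{cor: meight and msixt are fs}), and it then applies the FS version of Proposition \ref{prop: cant dist stab iso}.

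The unstable-versus-stable issue you flag as the main obstacle is settled by Theorem \ref{thm: 4D incl conn} (Freedman--Quinn: $\TOPn{4}/\On{4}\to\TOP/\oo$ is $5$-connected). Consequently, lifts of $X\to\bTOPn{4}$ to $\bon{4}$ over a $4$-complex follow the same obstruction theory as stable lifts, with uniqueness governed by $H^3(X;\mathbb{Z}/2)$. Your proposed workaround of showing that $\fscat{n}$ "only depends on stable refinements" cannot be read off from the definition, because FS structures are unstable $n$-microbundle refinements; that claim would itself rest on exactly this theorem.
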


Section \ref{sec:smooth} contains two main results. First, we construct a functor \[\mathscr{F}_n:\smcat{n}\to \fscat{n}\] and show that, for $n\geq6$, it is essentially surjective and fully faithful, yielding the following result.

\begin{theoremalpha}[Theorem \ref{thm: bord equiv}]
    For $n\geq6$, the categories $\fscat{n}$ and $\smcat{n}$ are equivalent.
\end{theoremalpha}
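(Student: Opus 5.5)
We will show that the functor $\mathscr{F}_n:\smcat{n}\to\fscat{n}$ is essentially surjective and fully faithful for $n\geq 6$. This gives the equivalence by the standard criterion, assuming the axiom of choice to pick inverses on objects. We first fix the construction of $\mathscr{F}_n$. A smooth manifold $X$, possibly with boundary and collars, is sent to its underlying topological manifold together with the canonical FS structure. That structure is the vector bundle refinement $TX\to\bon{n}$ of the tangent microbundle $\micro{X}$, with the canonical microbundle isomorphism between $TX$ (viewed as a microbundle) and $\micro{X}$. A smooth bordism $W$ is sent to the same topological bordism with its induced FS structure. Compatibility with gluing and disjoint union is immediate, because tangent bundles and the identification $TW\simeq\micro{W}$ restrict along collars and glue. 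So $\mathscr{F}_n$ is a symmetric monoidal functor.

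\textbf{Essential surjectivity.} The objects are closed FS $(n-1)$-manifolds with $n-1\geq5$. By Theorem~\ref{thm:KS10.1 sm version}, a vector bundle refinement of the tangent microbundle of a topological manifold of dimension $\geq5$ comes from a smooth structure. More precisely, smoothings up to concordance (equivalently isotopy) correspond bijectively to homotopy classes of lifts of $\micro{Y}$ along $\bo\to\BTOP$. Given an FS object $(Y,\xi,\phi)$, we therefore choose a smoothing $Y_s$ whose induced lift is homotopic to $(\xi,\phi)$. The homeomorphism $Y_s\to Y$ together with this homotopy of lifts gives an isomorphism $\mathscr{F}_n(Y_s)\cong(Y,\xi,\phi)$ in $\fscat{n}$. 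To make this work, we need the definition of isomorphism of FS objects to allow exactly this: a homeomorphism plus a homotopy of bundle data. If FS isomorphisms are instead realized as cylinder bordisms, we use the FS mapping cylinder and appeal to the bordism relation.

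\textbf{Fullness and faithfulness.} Morphisms are equivalence classes of bordisms. For fullness, take smooth objects $Y_0,Y_1$ and an FS bordism $W$ between $\mathscr{F}_n(Y_0)$ and $\mathscr{F}_n(Y_1)$. We apply the relative form of Kirby--Siebenmann, which holds since $\dim W=n\geq6$: the FS lift on $W$ restricts on a collar of $\partial W$ to the lift coming from the given smoothing of $\partial W$. The relative theorem then produces a smoothing of $W$, extending the smoothing near the boundary, whose lift is homotopic rel boundary to the given one. That smooth bordism maps to a bordism equivalent to $W$.

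For faithfulness, suppose two smooth bordisms $W,W'$ become equivalent in $\fscat{n}$. That means there is an FS homeomorphism rel boundary between them, i.e.\ a homeomorphism $h$ covering a homotopy of bundle lifts. We transport the smoothing of $W'$ along $h$ to obtain a second smoothing of $W$. This smoothing agrees with the original near $\partial W$ and induces a lift homotopic rel boundary. By the relative classification, the two smoothings are concordant rel boundary, and in dimension $\geq6$ concordance implies isotopy (Kirby--Siebenmann). So $h$ is isotopic rel boundary to a diffeomorphism $W\to W'$, and $W,W'$ agree in $\smcat{n}$.

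\textbf{Main obstacle.} The main obstacle is faithfulness. It requires the \emph{relative} and \emph{isotopy} (not just existence) version of smoothing theory. It also requires careful bookkeeping so that the FS notion of equivalent bordisms, including collars and homotopies of microbundle isomorphisms, matches exactly the data classified by $[W,\TOP/\oo]$ rel boundary. I would first check that Theorem~\ref{thm:KS10.1 sm version} is stated, or can be cited, in its relative form for manifolds with boundary of dimension $\geq6$ (objects $\geq5$). I would then verify that the homotopies in the FS definition are rel collar, so that the classification bijection applies directly.
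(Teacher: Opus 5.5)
This is essentially the paper's proof. For a fixed smooth boundary $X_\Sigma$, the paper shows that $\mathscr{H}_{X_\Sigma}$ is essentially surjective (by relative Kirby--Siebenmann existence) and essentially injective (by the relative classification plus concordance implies isotopy); here $\mathscr{H}_{X_\Sigma}$ goes from smooth manifolds with boundary $X_\Sigma$, up to diffeomorphism rel boundary, to FS manifolds with boundary $\mathscr{H}_\varnothing(X_\Sigma)$, up to FS isomorphism rel boundary. It then reads off essential surjectivity, fullness and faithfulness of $\mathscr{F}_n$ exactly as you do.

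Your definitional caveat is well placed, but the paper's corresponding leap is in the existence direction (essential surjectivity of $\mathscr{H}$, which feeds both essential surjectivity and fullness of $\mathscr{F}_n$), not in faithfulness. There the paper passes directly from vertically homotopic lifts over $\BTOP$ to an FS isomorphism over the identity, even though FS isomorphisms are defined by a strictly commuting diagram $\bm{G}_{X_2}\bm{F}=\bm{D}f\,\bm{G}_{X_1}$. Your insistence that the equivalence relation on FS data absorb homotopies of the identifications $\bm{G}$ is what that step needs to be legitimate.
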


Our $4$-manifold building blocks are FS but not smooth, so we cannot perform our dimensional reduction argument between smooth bordism categories. Since smooth structures may not be unique, there is no well-defined functor $-\times S^m$ from the topological bordism category to the smooth one. Thus, we dimensionally reduce in the FS category and then use a weak inverse of $\mathscr{F}_{4+m}$ to map to the smooth category: \[\fscat{4}\xrightarrow{-\times S^m}\fscat{n}\xrightarrow{\mathscr{F}_{4+m}^{-1}}\smcat{4+m}\to\vectc.\]

\noindent This allows us to extend our main result to the smooth case.

\begin{theoremalpha}[Theorem \ref{thm: sm cant dist htpy}]
      For all $m\geq2$, ordinary $(4+m)$-dimensional smooth TQFTs cannot distinguish $\ssm{M^4}$ and $\ssm{N^4}$.
\end{theoremalpha}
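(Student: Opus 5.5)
The plan is to reduce to the FS statement (Theorem \ref{thm: intro FS main}) through the equivalence of bordism categories (Theorem \ref{thm: bord equiv}). Set $n=4+m\geq 6$ and fix an ordinary smooth TQFT $\mathcal{Z}:\smcat{n}\to\vectc$. By Theorem \ref{thm: bord equiv}, the functor $\mathscr{F}_n:\smcat{n}\to\fscat{n}$ is an equivalence, so it has a weak inverse $\mathscr{F}_n^{-1}$. I will first check that $\mathscr{F}_n^{-1}$ can be chosen symmetric monoidal. The standard route is to note that $\mathscr{F}_n$ is a strong symmetric monoidal functor, since it sends disjoint union to disjoint union. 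Any weak inverse of a symmetric monoidal equivalence then inherits a symmetric monoidal structure, which can be transported along the unit and counit. Hence $\mathcal{Z}':=\mathcal{Z}\circ\mathscr{F}_n^{-1}:\fscat{n}\to\vectc$ is an $n$-dimensional FS TQFT.

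Next I will show that $\mathcal{Z}'$ is ordinary. The FS manifolds $S^{q-1}\times K$, with $K$ a closed FS $(n-q)$-manifold, are sent by $\mathscr{F}_n^{-1}$ to smooth manifolds isomorphic to $S^{q-1}\times K'$. Here $K'$ is a smoothing of $K$; it exists because the FS structure restricts to $K$, and by essential surjectivity together with product compatibility of $\mathscr{F}$ we have $\mathscr{F}_{n}(S^{q-1}\times K')\cong S^{q-1}\times \mathscr{F}(K')$. Low-dimensional $K$ might not be smoothable. If so, I would fall back on the observation that the proof of Theorem \ref{thm: intro FS main} only uses semisimplicity of $\mathcal{Z}'(S^{3}\times S^m)$, and more precisely of the reduction along $S^m$ and the stable isomorphism argument of Theorem \ref{thm: rsp A}. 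Those objects are products of spheres and are plainly smooth.

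Next I check that the algebra structure is preserved. The bordisms $\mu\times K$ and $\upsilon\times K$ are products, so $\mathscr{F}_n^{-1}$ carries them to the corresponding smooth product bordisms, up to the natural isomorphisms. The induced Frobenius algebras $\mathcal{Z}'(S^{q-1}\times K)$ and $\mathcal{Z}(S^{q-1}\times K')$ are therefore isomorphic as algebras, and the latter is semisimple because $\mathcal{Z}$ is ordinary.

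Finally, Theorem \ref{thm: intro FS main} applied to $\mathcal{Z}'$ gives $\mathcal{Z}'(\ssm{M^4})=\mathcal{Z}'(\ssm{N^4})$. Now $\mathscr{F}_n^{-1}(\ssm{M^4})$ and $\mathscr{F}_n^{-1}(\ssm{N^4})$ are smoothings of the two topological manifolds. Any smooth manifold $X$ diffeomorphic to a smoothing of $\ssm{M^4}$ satisfies $\mathscr{F}_n(X)\cong \ssm{M^4}$ in $\fscat{n}$, with the FS structure induced by the chosen one. For such $X$ we get $\mathcal{Z}(X)=\mathcal{Z}'(\mathscr{F}_n X)$, and likewise for $N$. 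So $\mathcal{Z}$ takes the same value on the corresponding smoothings, which is the claim. The main obstacle I expect is the ordinarity transfer in the second step. If the hypotheses there genuinely require all FS $K$, I would check that the proof of Theorem \ref{thm: intro FS main} only invokes semisimplicity for the smooth cross sections, and phrase the argument accordingly.
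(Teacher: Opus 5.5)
Your proposal is correct and follows the paper's proof. Both compose $\mathcal{Z}$ with a symmetric monoidal weak inverse $\mathscr{F}_{4+m}^{-1}$ of the equivalence $\mathscr{F}_{4+m}$ and then apply the FS theorem, with the smooth manifolds $\ssm{M^4}$ and $\ssm{N^4}$ understood, as in the paper, to be the images under $\mathscr{F}_{4+m}^{-1}$ of the product FS structures; read your last paragraph as a statement about these particular smoothings, not about arbitrary smoothings of the topological products. The paper simply asserts that $\mathcal{Z}\circ\mathscr{F}_{4+m}^{-1}$ is ordinary, whereas you note that this transfer is not automatic for non-smoothable $4$-dimensional $K$ (e.g. $K=\meight$). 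Your fallback is a more careful version of the same step: the argument only needs semisimplicity of the value on $S^3\times S^m$, and that does transfer because $S^m$ and the $4$-dimensional bordisms $\mu,\upsilon$ are smooth.
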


Section \ref{sec: dim 5} addresses the $5$-dimensional case. When reducing from a $5$-dimensional TQFT to a $4$-dimensional one, the only option is to reduce along $S^1$, so we cannot produce any simply connected counter-examples (or, more generally, with finite $\pi_1$). In fact, Kreck and Teichner in \cite[Theorem~3]{kt} showed that all simply connected closed smooth $5$-manifolds can be distinguished by a particular class of ordinary TQFTs, called positive unitary TQFTs. We show that, on the other hand, ordinary TQFTs cannot distinguish \emph{non}-simply connected $5$-manifolds.

\begin{theoremalpha}[Theorem \ref{thm: 5 dim smooth}]
    In the topological, FS, and smooth categories, ordinary $5$-dimensional TQFTs fail to distinguish $M^4\times S^1$ and $N^4\times S^1$.
\end{theoremalpha}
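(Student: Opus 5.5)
The proof proceeds by dimensional reduction along $S^1$, in the same way as the higher-dimensional cases, and then transfers the result from the topological to the FS and smooth categories.

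\emph{Topological case.} Let $\mathcal{Z}:\topcat{5}\to\vectc$ be an ordinary TQFT, and let $\mathcal{W}=\mathcal{Z}\circ(-\times S^1):\topcat{4}\to\vectc$ be its dimensional reduction along $S^1$. We check that $\mathcal{W}$ is ordinary. For a closed $(4-q)$-manifold $K$, the algebra $\mathcal{W}(S^{q-1}\times K)$ equals $\mathcal{Z}(S^{q-1}\times(K\times S^1))$. Its multiplication and unit bordisms are the $\mu\times K$ and $\upsilon\times K$ structure bordisms crossed with $S^1$. Hence this algebra is the $\mathcal{Z}$-algebra associated to the closed $(5-q)$-manifold $K\times S^1$, which is semisimple because $\mathcal{Z}$ is ordinary. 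By Freedman's theorem, $M^4$ and $N^4$ are stably homeomorphic. Theorem \ref{thm: rsp A} then gives $\mathcal{W}(M^4)=\mathcal{W}(N^4)$, that is, $\mathcal{Z}(M^4\times S^1)=\mathcal{Z}(N^4\times S^1)$. This argument is identical to the proof of Theorem \ref{thm: main thm top} with $m=1$, so I would simply cite that proof and note that it never used $m>1$.

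\emph{FS case.} The same reduction works in $\fscat{\bullet}$. The FS structures on $M^4,N^4$ from Section \ref{sec:FS} are used together with the product FS structure on $-\times S^1$, as in Theorem \ref{thm: main thm FS}. One point must be checked: the stable homeomorphism has to be upgradable to a stable FS-isomorphism, so that Theorem \ref{thm: rsp A} (in its FS form) applies. This was already needed for Theorem \ref{thm: main thm FS}, so I expect it to be available.

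\emph{Smooth case, the main obstacle.} The equivalence $\mathscr{F}_n$ of Theorem \ref{thm: bord equiv} is only available for $n\geq 6$. The smooth proof for $4+m\geq 6$ used it to pass from $\fscat{4+m}$ to $\smcat{4+m}$, and this is unavailable for $n=5$. My plan is to avoid it by lifting one dimension. Any $5$-dimensional smooth TQFT $\mathcal{Z}$ can be reduced along a further circle: the composite
\[\smcat{4}\xrightarrow{-\times T^2}\smcat{6}\]
does not make sense, since the building blocks are not smooth. Instead, I would argue directly that $M^4\times S^1$ and $N^4\times S^1$ are smoothable, since the Kirby--Siebenmann obstruction vanishes on the product with $S^1$ (Theorem \ref{thm:KS10.1 sm version} applies in dimension $5$). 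I would then show that suitable smoothings are stably related through $5$-dimensional smooth bordisms of the form $W\times S^1$, where $W$ ranges over the bordisms realizing the stable homeomorphism. These products are smoothable for the same reason, and their smoothings are compatible on the boundaries because smoothing theory holds rel boundary in dimension $\geq 5$.

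Concretely, I would build a functor from the subcategory of $\fscat{4}$ generated by these finitely many bordisms, such as $\mu\times K$, $\upsilon\times K$, and the pieces of $M^4$ and $N^4$, into $\smcat{5}$, by choosing smoothings of their products with $S^1$ compatibly. The algebraic argument of Theorem \ref{thm: rsp A} (via the proof in Section \ref{sec simpletqft}) only uses the values on such a finite collection of gluings. Checking this compatibility of smoothings in dimension $5$, where $\mathscr{F}_5$ is not known to be an equivalence, is the step I expect to be hardest.
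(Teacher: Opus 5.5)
Your topological and FS arguments are correct and match the paper, which simply invokes Theorems \ref{thm: main thm top} and \ref{thm: main thm FS} with $m=1$. Your explicit check that the reduction along $S^1$ is ordinary is a welcome addition.

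In the smooth case your architecture is the paper's: a reduction functor $\fscat{4}\to\smcat{5}$, $W\mapsto W\times S^1$ with chosen smoothings, bypassing the failure of $\mathscr{F}_5$ to be essentially surjective. But the step you defer is where all the content lies, and choosing smoothings that are merely compatible on boundaries does not produce a functor. Smoothings of $W\times S^1$ rel boundary exist, but they are not unique: up to concordance rel boundary they are classified by $H^3(W\times S^1,\partial;\mathbb{Z}/2)$, which is typically nonzero. If you choose them bordism by bordism, composites of chosen smoothings need not be diffeomorphic rel boundary to the chosen smoothing of the composite. Then the relations used in Section \ref{sec simpletqft} need not survive. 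These include $\mu\circ(\upsilon\sqcup\mathrm{id})\cong\mathrm{id}$, the decomposition $M\# N\cong\varepsilon\circ\mu\circ(M'\sqcup N')$, and the Reutter--Schommer-Pries isomorphism in Proposition \ref{prop: torus invert}.

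The missing idea is to let the FS data pin down the smoothing. For a $4$-dimensional FS bordism $(W,\xi_W,\bm{G}_W)$, take the product FS structure $(W\times S^1,\xi_W\boxplus\tau_{S^1},\bm{G}_W\boxplus\bm{\mathrm{Id}}_{S^1\times S^1})$. Choose a smoothing rel boundary whose smooth tangent microbundle matches it in the sense of an FS isomorphism.
\begin{itemize}
\item Existence follows from Theorem \ref{thm:KS10.1 sm version}, which is valid in dimension $5$ because $\partial\subset C$.
\item Uniqueness up to diffeomorphism rel boundary follows from the same theorem together with concordance-implies-isotopy.
\end{itemize}
Together these say that $\mathscr{F}_5$ is full and faithful, even though it is not essentially surjective.

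Faithfulness then gives functoriality for free: the glued smoothing and the chosen smoothing of the glued bordism have the same FS image, so they are the same morphism of $\smcat{5}$. On objects, fix one smooth structure for each closed $3$-manifold; this is harmless because $3$-manifolds have a unique FS structure up to isomorphism. This defines $\mathscr{G}$ on all of $\fscat{4}$, symmetric monoidally. Hence $\mathcal{Z}\circ\mathscr{G}$ is a genuine ordinary $4$-dimensional FS TQFT, and the FS stable-isomorphism argument applies directly.

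Restricting to a finitely generated subcategory is therefore unnecessary. It would also force you to re-derive the decomposition into simple theories and the ordinarity hypotheses there. Finally, the smooth manifolds $M^4\times S^1$ and $N^4\times S^1$ in the statement have to be these FS-compatible smoothings $\mathscr{G}(M^4)$ and $\mathscr{G}(N^4)$, not arbitrary ones.
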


In the topological and FS categories, the $5$-dimensional situation is no different from those in higher dimensions. However, the smooth case is subtle since $\mathscr{F}_5$ is not essentially surjective. It is, however, fully faithful, which allows us to bypass $\fscat{5}$ by constructing a direct dimensional reduction functor $\fscat{4}\xrightarrow{-\times S^1}\smcat{5}$.

Section \ref{sec: non spin} begins with a brief discussion of spinnability for topological manifolds. We then show the following.

\begin{propositionalpha}[Corollaries \ref{cor: are spin}, \ref{cor: are not spin}, Proposition \ref{prop: non spin not htpy eq}]
\label{thm: intro nonspin}
    For all $m$, the following are true:
    \begin{enumerate}
        \item $\ssm{M^{4}}$ and $\ssm{N^{4}}$ are spinnable.
        \item $\ssm{\LL M^4 \# \mathbb{C}P^2\RR}$ and $\ssm{\LL N^4\# \mathbb{C}P^2\RR}$ are not spinnable.
        \item $\ssm{\LL M^4 \# \mathbb{C}P^2\RR}$ and $\ssm{\LL N^4\# \mathbb{C}P^2\RR}$ are not homotopy equivalent (or homotopy equivalent after stabilization if $m$ is even and nonzero), yet cannot be distinguished by ordinary TQFTs in the topological, FS, or smooth categories.
    \end{enumerate}
    
\end{propositionalpha}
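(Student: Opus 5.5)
The plan is to identify $M^4$ and $N^4$ concretely (macros suggest $\meight$ and $\msixt$: the closed simply connected topological $4$-manifolds with intersection forms $E_8\oplus E_8$ and $E_{16}$, as given by Freedman) and handle the three items in order.

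\textbf{Item 1.} For spinnability, both forms are even, so the Kirby--Siebenmann-free Wu formula shows $w_2$ of each building block vanishes. Indeed, for a simply connected topological $4$-manifold, $w_2$ is the mod $2$ reduction of a characteristic element of the form, and $0$ is characteristic for an even form. Hence $M^4$ and $N^4$ are spinnable (via topological spin structures, which exist since $\pi_1=0$ and $w_1=w_2=0$). Since $S^m$ is stably parallelizable, the tangent microbundle of $\ssm{M^4}$ is stably the pullback of that of $M^4$. So $w_2(\ssm{M^4})=p^*w_2(M^4)=0$ by the Whitney formula, and $w_1=0$ by orientability.

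\textbf{Item 2.} For non-spinnability, $\mathbb{C}P^2$ has odd intersection form, so $M^4\#\mathbb{C}P^2$ has odd form $Q\oplus\langle 1\rangle$, and $w_2\neq 0$ (it pairs nontrivially with the generator $x$ of the $\mathbb{C}P^2$ summand, since $x\cdot x=1$ is odd). Pulling back along the projection $\ssm{(M^4\#\mathbb{C}P^2)}\to M^4\#\mathbb{C}P^2$ is injective on $H^2(-;\mathbb{Z}/2)$ by K\"unneth. So $w_2$ of the product is nonzero and it is not spinnable.

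\textbf{Item 3, homotopy type.} I would mimic the proof of Proposition \ref{prop:88 16 product not htpy eq or stab iso}. The intersection forms $Q_1=E_8\oplus E_8\oplus\langle1\rangle$ and $Q_2=E_{16}\oplus\langle1\rangle$ are odd and indefinite? No: they are positive definite, and positive definite odd forms need not be classified by rank and signature. So the key check is that $Q_1\not\cong Q_2$ over $\mathbb{Z}$. I would argue via the characteristic sublattice: for definite odd $L=E\oplus\langle1\rangle$ with $E$ even, the vectors of norm $1$ are $\pm e$ exactly (even lattices $E_8\oplus E_8$, $E_{16}$ have minimum $2$). Hence $E=e^\perp$ is recovered as an invariant, and $E_8\oplus E_8\not\cong E_{16}$.

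I then expect the earlier argument to transfer, reading off $Q$ from a homotopy invariant of $\ssm{X}$, e.g.\ the cup product pairing $H^2\times H^2\to H^4$ together with the $H^4\otimes H^m\to H^{4+m}$ structure, which is a homotopy invariant. Stabilization for $m$ even, nonzero, adds $S^\ell\times S^\ell$ with $\ell=(4+m)/2\geq3$, which does not affect $H^2,H^4$ when $\ell\neq2,4$. The case $\ell=4$ ($m=4$) needs the same extra care as in the earlier proof, and I will follow that treatment verbatim with $Q$ replaced by $Q\oplus\langle1\rangle$.

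\textbf{Item 3, TQFT indistinguishability.} $M^4\#\mathbb{C}P^2$ and $N^4\#\mathbb{C}P^2$ are stably homeomorphic by Freedman, since $M^4,N^4$ already are and connected sum commutes with stabilization. They carry FS structures because $\mathbb{C}P^2$ is smooth and FS structures are preserved under connected sum. The proofs of Theorems \ref{thm: main thm top}, \ref{thm: main thm FS}, \ref{thm: sm cant dist htpy}, \ref{thm: 5 dim smooth} used only stable isomorphism of the $4$-dimensional blocks in the relevant category plus dimensional reduction, so they apply verbatim.

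\textbf{Main obstacle.} I expect the main obstacle to be the homotopy-invariance step, i.e.\ confirming the earlier invariant survives the odd summand; the norm-$1$ vector argument is what I would try first.
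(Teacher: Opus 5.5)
Your proposal is correct and has the same architecture as the paper: evenness of the building blocks, an odd $\langle 1\rangle$ summand from $\mathbb{C}P^2$, non-isometry of the resulting forms fed into Lemmas \ref{lem: diff int not htpy eq after cross} and \ref{lem: diff int not stab htp}, and stable homeomorphism plus dimensional reduction for the TQFT statement. Two ingredients are handled differently.

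\textbf{Spinnability.} The paper stays with its Wu-class definition. Lemma \ref{lem: prod of spin iff} uses the Cartan formula to see that $\sqi{1},\sqi{2}$ act on $H^*\LL M\times S^m;\mathbb{Z}/2\RR$ exactly as on $H^*\LL M;\mathbb{Z}/2\RR$. It therefore never needs Stiefel--Whitney classes of topological microbundles or stable parallelizability of $S^m$. Your route through $w\LL M\times S^m\RR=p^*w(M)$ is valid, but it additionally uses three facts:
\begin{itemize}
\item the product formula for tangent microbundles;
\item the Whitney sum formula for topological Stiefel--Whitney classes;
\item the topological Wu formula, to convert $w_1,w_2$ back into the paper's $v_1,v_2$.
\end{itemize}
Your characteristic-element argument is really a statement about $v_2$ anyway.

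\textbf{Non-isometry of the forms.} This is the more substantial difference. The paper verifies that $E_8\oplus E_8\oplus\langle 1\rangle\not\cong E_{16}\oplus\langle 1\rangle$ by the Magma computation in Appendix \ref{appendix}. Your norm-one argument is a complete hand proof. In $E\oplus\langle 1\rangle$ with $E$ positive definite and even, $u\cdot u+k^2=1$ forces $u=0$ and $k=\pm 1$. So any isometry carries $\pm e$ to $\pm e$, hence carries $E=e^\perp$ isometrically onto the other even lattice, which contradicts $E_8\oplus E_8\not\cong E_{16}$. This is computer-free, and it shows more generally that $\langle 1\rangle$ cancels between positive definite lattices with no vectors of norm one.

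\textbf{Two small points on the homotopy step.} For $m=2$, the invariant you need is exactly the one in Lemma \ref{lem: diff int not htpy eq after cross}. That lemma uses definiteness to single out $\mathbb{Z}[\sigma]$ as the square-zero classes, so you can apply it directly to the simply connected manifolds $\meight\#\mathbb{C}P^2$ and $\msixt\#\mathbb{C}P^2$ rather than re-deriving it.

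For $m=4$, the paper's Lemma \ref{lem: diff int not stab htp} does not actually contain the extra care you anticipate. It is easily supplied, however. The subgroup of $H^4$ spanned by cup products of degree-two classes is a rank-one summand that stabilization does not change. On that summand the cup pairing is $\pm$ the intersection form, which is a homotopy invariant.
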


In Section \ref{sec: high conn}, we discuss some more highly connected analogues of our counter-examples built from $M^4$ and $N^4$. For all $n\geq2$, there is a pair of stably-diffeomorphic, $(2n-1)$-connected, closed, smooth $4n$-manifolds denoted $M^{4n}$ and $N^{4n}$. Again using dimensional reduction, we prove the following result. The proof is identical in the topological, FS, and smooth categories.

\begin{propositionalpha}[Proposition \ref{prop: high conn 88 16 product not htpy eq or stab iso}, Proposition \ref{prop: high conn main theorem}]
    For all $n\geq2$ and all $m$, the manifolds $\ssm{M^{4n}}$ and $\ssm{N^{4n}}$ are not homotopy equivalent (or homotopy equivalent after stabilization, if $m$ is even and nonzero), but are nevertheless indistinguishable by ordinary TQFTs. For $m\geq2n$, they are $(2n-1)$-connected.
\end{propositionalpha}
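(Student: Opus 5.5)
Here $M^{4n}$ and $N^{4n}$ are the $(2n-1)$-connected closed smooth $4n$-manifolds with intersection forms $E_8\oplus E_8$ and $E_{16}$. For $n\ge2$ they are obtained by plumbing disk bundles over $S^{2n}$ according to the lattice, and then coning off the boundary homotopy sphere, which is possible after taking a connected sum with a suitable exotic sphere in the smooth case. I would split the statement into two independent claims, namely non-homotopy-equivalence and TQFT-indistinguishability, followed by a connectivity check.

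\textbf{Indistinguishability.} Both lattices are even, unimodular, of rank $16$ and signature $16$. By Wall's classification of $(2n-1)$-connected $4n$-manifolds (the observation of Kreck--Schafer quoted in the introduction), after adding $S^{2n}\times S^{2n}$ summands both forms become isomorphic to $E_8\oplus E_8\oplus H^{p}\cong E_{16}\oplus H^{p}$. The remaining Wall invariant is the map $\alpha:H_{2n}\to\pi_{2n-1}(\mathrm{SO}(2n))$ classifying normal bundles of embedded spheres, and this is also chosen compatibly (e.g.\ trivial stable normal data). So $M^{4n}$ and $N^{4n}$ are stably diffeomorphic, and by Theorem~\ref{thm: rsp A} no ordinary $4n$-dimensional TQFT distinguishes them. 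Now let $\mathcal Z$ be an ordinary $(4n+m)$-dimensional TQFT and let $\mathcal W=\mathcal Z\circ(-\times S^m)$ be its dimensional reduction along $S^m$. Then $\mathcal W$ is again ordinary: for any closed $K$, $\mathcal W(S^{q-1}\times K)=\mathcal Z(S^{q-1}\times (K\times S^m))$, with multiplication $\mu\times K\times S^m$, and this algebra is semisimple because $\mathcal Z$ is ordinary. Hence $\mathcal Z(M^{4n}\times S^m)=\mathcal W(M^{4n})=\mathcal W(N^{4n})=\mathcal Z(N^{4n}\times S^m)$. This is the same argument as Theorem~\ref{thm: main thm top}, now carried out directly in the smooth (or topological/FS) category, since here the building blocks are smooth.

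\textbf{Not homotopy equivalent.} I would follow Proposition~\ref{prop:88 16 product not htpy eq or stab iso}. By Künneth, $H^{2n}(M^{4n}\times S^m)\supseteq H^{2n}(M^{4n})$. When $m\neq 2n$ and $m\ne 4n$, the cup-product pairing $H^{2n}\times H^{2n}\to H^{4n}\cong\mathbb Z$ recovers the lattice, so a homotopy equivalence would force $E_8\oplus E_8\cong E_{16}$. That is false: $E_8\oplus E_8$ is generated by its roots while $E_{16}$ (i.e.\ $D_{16}^+$) is not. The cases $m=2n$, $m=4n$, and stabilization for even $m$ are where degrees collide and extra classes appear, and I expect this to be the main obstacle. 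There the invariant I would use is the Pontryagin dual description: $H^{2n}$ carries the form $x\cdot y\mapsto\langle xy\,s,[\,]\rangle$ on a degree-$(4n+m)$ pairing with the $S^m$ class. Equivalently, consider the \emph{torsion-free} quotient of $H^{2n}$ annihilated by cupping with the generator of $H^m$ coming from $S^m$, or use the ring structure with that generator, which is characterized homotopically as an indecomposable class in degree $m$ with square zero. Stabilization adds hyperbolic $H$ summands in middle degree $(4n+m)/2$, which differs from $2n$ unless $m=0$. Hence the form on $H^{2n}$, up to identifying the $S^m$-generator (which is unique up to sign modulo decomposables), is a homotopy invariant, and it still distinguishes the two lattices.

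\textbf{Connectivity.} $M^{4n}$ is $(2n-1)$-connected and $S^m$ is $(m-1)$-connected, so the product is $\min(2n-1,m-1)$-connected. This equals $2n-1$ when $m\ge 2n$.
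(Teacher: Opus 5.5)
Your architecture is the paper's. Homotopy inequivalence comes from cup-product forms on $H^{2n}$, with the sphere class singled out as the square-zero class when $m=2n$, as in Lemma~\ref{lem: diff int not htpy eq after cross}. Indistinguishability comes from dimensional reduction along $S^m$, stable diffeomorphism and Theorem~\ref{thm: rsp A}, and connectivity from K\"unneth. Two of your additions are welcome: the check that the reduced theory $\mathcal{W}$ is again ordinary, and the root-system proof that $E_8\oplus E_8\not\cong E_{16}$. (You need $\not\cong\pm E_{16}$, since homotopy equivalences may reverse orientation; definiteness excludes the sign.)

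\textbf{The gap: $m=4n$ after stabilization.} You compared the middle degree $(4n+m)/2$ only with $2n$, but for $m=4n$ it equals both $m$ and $4n$. Put $X=M^{4n}\times S^{4n}$ and $X'=X\#^p(S^{4n}\times S^{4n})$.
\begin{itemize}
\item $H^{4n}(X')$ has basis $\mu,s,e_1,f_1,\dots,e_p,f_p$, where $\mu$ is the pullback of the generator of $H^{4n}(M^{4n})$.
\item The decomposables in degree $4n$ are only $\mathbb{Z}\mu$, so $s$ is not unique up to sign modulo decomposables.
\item Nor is $s$ characterized as an indecomposable square-zero class: $e_1$ is one, and $\langle xye_1,[X']\rangle=0$ for all $x,y\in H^{2n}$.
\end{itemize}
So your invariant is not shown to be a homotopy invariant, and this case ($m=4n$ even and nonzero) is covered by the statement.

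\textbf{The repair simplifies your argument.} For every $m\ne 2n$, stabilized or not and including $m=4n$, use the plain pairing $H^{2n}\times H^{2n}\to H^{4n}$.
\begin{itemize}
\item Here $H^{2n}=H^{2n}(M^{4n})$ and $xy=q(x,y)\mu$.
\item Since $q$ is unimodular, the image is exactly $\mathbb{Z}\mu$, a rank-one direct summand (the degree-$4n$ decomposables).
\item A ring isomorphism induced by a homotopy equivalence must send $\mu$ to $\pm\mu$, which gives $q_1\cong\pm q_2$. You never need $H^{4n}\cong\mathbb{Z}$.
\end{itemize}
Only $m=2n$ requires $s$, and there you should say why $\pm s$ is the only primitive square-zero class in $H^{2n}$. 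We have $(a+ts)^2=a^2+2t\,as$, with the two terms in different K\"unneth summands, and $a^2=0$ forces $a=0$ because $q$ is definite. This is exactly where the paper's lemma uses definiteness, and stabilization (in degree $3n$) does not interfere.

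\textbf{Building blocks.} Do not lean on your description of how $M^{4n}$ and $N^{4n}$ are built.
\begin{itemize}
\item With trivial stable normal data the plumbings are parallelizable, with boundary $2\Sigma_M\neq 0$ in $bP_{4n}$. That boundary cannot be capped off smoothly.
\item Connected sum with an exotic $4n$-sphere does not help, since the obstruction lies in $\Theta_{4n-1}$.
\item For odd $n$, only $p_n$ is rationally nonzero on such a manifold, so $\sigma=-2^{2n+1}(2^{2n-1}-1)\hat A$. Signature $16$ therefore cannot occur smoothly at all.
\end{itemize}
Take existence and stable diffeomorphism from Wall and Kreck--Schafer, as the paper does. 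Apart from the non-isometry, your argument only uses that the forms are definite and unimodular.
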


The \ref{appendix}ppendix contains Magma code that was used to verify that $\ssm{\LL M^4 \# \mathbb{C}P^2\RR}$ and $\ssm{\LL N^4\# \mathbb{C}P^2\RR}$ are not homotopy equivalent.

\subsection{Unitary and Once-Extended TQFTs are Ordinary}

For a manifold $M$, let $\overline{M}$ denote the manifold with opposite orientation. If $M:X\rightsquigarrow Y$ is a bordism, flipping the orientation yields a bordism $\overline{M}: Y\rightsquigarrow X$.

\begin{definition}
    A TQFT is \textbf{unitary} if it factors through the category $\hilb$ of complex Hilbert spaces and satisfies the condition that, for any bordism $M:X \rightsquigarrow Y$, the linear map $\mathcal{Z}(\overline{M}):\mathcal{Z}(Y)\to \mathcal{Z}(X)$ is the adjoint of $\mathcal{Z}(M):\mathcal{Z}(X)\to\mathcal{Z}(Y)$, denoted $\mathcal{Z}(M)^\dagger$.
\end{definition}

\begin{lemma}
    Dimensional reductions of unitary TQFTs are unitary.
    \begin{proof}
        Let $\mathcal{Z}:\bord_n\to \vectc$ be a unitary TQFT, and let $\mathcal{W}$ be the dimensional reduction of $\mathcal{Z}$ along some closed $(n-q)$-manifold $K$. Let $M$ be a $q$-dimensional bordism. Then, $\mathcal{W}(\overline{M})=\mathcal{Z}(\overline{M}\times K)=\mathcal{Z}(\overline{M\times K})$, since $\overline{M}\times K$ and $ \overline{M\times K}$ are orientation-preserving isomorphic. Since $\mathcal{Z}$ is unitary, $\mathcal{Z}(\overline{M\times K})=\mathcal{Z}(M\times K)^\dagger=\mathcal{W}(M)^\dagger$. Thus, $\mathcal{W}$ is unitary.
    \end{proof}
\end{lemma}

\begin{proposition}
\label{prop: unitary are ord}
    Unitary TQFTs are ordinary.
    \begin{proof}
     Let $\mathcal{Z}$ be a unitary $n$-dimensional TQFT, and let $\mathcal{W}$ be the dimensional reduction of $\mathcal{Z}$ along some closed $(n-q)$-manifold $M$. By the above lemma, $\mathcal{W}$ is unitary. The proof of Theorem 2.9 in \cite{reutter} shows that any unitary $q$-dimensional TQFT maps $S^{q-1}$ to a semisimple algebra. Thus, $\mathcal{W}(S^{q-1})=\mathcal{Z}(M\times S^{q-1})$ is semisimple.
    \end{proof}
\end{proposition}

A once-extended TQFT is a symmetric monoidal $2$-functor from the bicategory of bordisms to the bicategory of complex $2$-vector spaces (see \cite{christhesis} for a precise definition). The proof of Theorem 2.10 in \cite{reutter} shows that any once-extended $q$-dimensional TQFT maps $S^{q-1}$ to a semisimple algebra. An argument analogous to the proof of Proposition \ref{prop: unitary are ord} proves the following result.

\begin{proposition}
    Once-extended TQFTs are ordinary. \qed
\end{proposition}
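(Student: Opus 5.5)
The plan is to imitate the proof of Proposition \ref{prop: unitary are ord}, replacing unitarity by the property of being once-extended. Let $\mathcal{Z}$ be a once-extended $n$-dimensional TQFT, i.e.\ a symmetric monoidal $2$-functor out of the bordism bicategory $\bord_{n,n-1,n-2}$ into complex $2$-vector spaces. Its underlying ordinary TQFT is obtained by restricting to endomorphisms of the empty $(n-2)$-manifold. Fix $1\le q\le n$ and a closed $(n-q)$-manifold $K$. We must show that $\mathcal{Z}(S^{q-1}\times K)$, with the algebra structure induced by $\mu\times K$ and $\upsilon\times K$, is semisimple.

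The first step is the analogue of the lemma that dimensional reductions of unitary TQFTs are unitary. Taking the product with $K$ gives a symmetric monoidal $2$-functor
\[-\times K:\bord_{q,q-1,q-2}\to\bord_{n,n-1,n-2}.\]
On objects it sends a closed $(q-2)$-manifold $Y$ to $Y\times K$. On $1$- and $2$-morphisms it sends bordisms, and bordisms between bordisms, to their products with $K$. Collars, gluing and disjoint union all commute with $-\times K$ up to coherent isomorphism, exactly as in the $1$-categorical case. Composing this with $\mathcal{Z}$ therefore gives a once-extended $q$-dimensional TQFT $\mathcal{W}$. Its restriction to the ordinary bordism category is the dimensional reduction of $\mathcal{Z}$ along $K$ from the Definition. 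In particular, $\mathcal{W}(S^{q-1})=\mathcal{Z}(S^{q-1}\times K)$ as algebras, since the multiplication and unit bordisms are $\mu\times K$ and $\upsilon\times K$.

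The second step is to invoke the argument in the proof of \cite[Theorem~2.10]{reutter}. That argument shows that any once-extended $q$-dimensional TQFT sends $S^{q-1}$ to a semisimple algebra. Roughly, $S^{q-1}$ is decomposed as two hemispheres glued along $S^{q-2}$, which exhibits $\mathcal{W}(S^{q-1})$ as the endomorphism algebra (trace) of the identity on the $2$-vector space $\mathcal{W}(S^{q-2})$. Such an algebra is a finite product of copies of $\mathbb{C}$, hence semisimple. Applying this to $\mathcal{W}$ shows that $\mathcal{Z}(S^{q-1}\times K)$ is semisimple for every $q$ and $K$, so $\mathcal{Z}$ is ordinary.

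The main obstacle is the first step: making precise that $-\times K$ is a genuine symmetric monoidal $2$-functor on the bordism bicategories. This requires checking compatibility with the chosen collar and halo conventions and with the coherence $2$-cells. I would handle it by working with the model in \cite{christhesis}. In that model, products with a fixed closed manifold visibly preserve the bordism data, and the coherence isomorphisms are canonical diffeomorphisms, so the $2$-functor axioms follow as in the $1$-categorical case. The low-dimensional edge cases $q=1,2$ need a separate look. For $q=1$ the claim is only that $\mathcal{W}(S^0)\cong\mathcal{W}(\mathrm{pt})\otimes\mathcal{W}(\mathrm{pt})^*$ is a matrix algebra. For $q=2$ it is the standard fact that once-extended $2$-dimensional theories give semisimple commutative Frobenius algebras. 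Both are covered by the same argument from \cite{reutter}.
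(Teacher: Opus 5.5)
Your proposal is correct and follows the paper's argument: show that the reduction $\mathcal{W}=\mathcal{Z}\circ(-\times K)$ of a once-extended theory is again once-extended (the analogue of the lemma for unitary theories), then apply the argument of \cite[Theorem~2.10]{reutter} to conclude that $\mathcal{W}(S^{q-1})=\mathcal{Z}(S^{q-1}\times K)$ is semisimple. One small correction to your heuristic sketch, which does not affect the proof since you, like the paper, rely on Reutter's argument: splitting $S^{q-1}$ into hemispheres identifies $\mathcal{W}(S^{q-1})$ with the endomorphism algebra of the object $\mathcal{W}(D^{q-1})$ of the semisimple category $\mathcal{W}(S^{q-2})$, not with the trace of the identity on $\mathcal{W}(S^{q-2})$, which is instead $\mathcal{W}(S^{q-2}\times S^1)$.
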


\begin{remark}
    In dimension $\geq 4$, all currently known examples of TQFTs are either unitary or once-extended, and are thus ordinary. 
\end{remark}

\subsection{Acknowledgments}
I would like to thank my PhD advisor, Christopher Schommer-Pries, for his consistent guidance and encouragement and for presenting me with this problem. I am also deeply grateful to Stephan Stolz for his support and many invaluable discussions. I would also like to thank Alexander Kupers for proposing the use of smooth microbundles in this paper and Lorenzo Riva for helpful comments and suggestions. This work was partially supported by the National Science Foundation under Grant No. DMS-2204297. Occasional discussions with ChatGPT were  used to improve understanding of existing literature and mathematical concepts related to the project as well as to debug \LaTeX \hspace{.5mm}code in some parts of this document.


\section{Simple TQFTs and Stable Isomorphism} \label{sec simpletqft}

\begin{definition}
    An $n$-dimensional TQFT $\mathcal{Z}$ is \textbf{simple} if $\mathcal{Z}(S^{n-1})\cong \mathbb{C}$.
\end{definition}
For $n\geq 2$, the value of an $n$-dimensional ordinary TQFT on $S^{n-1}$ is commutative and semisimple, so it is a direct sum of copies of $\mathbb{C}$ by the Artin-Wedderburn theorem. By Corollary 5.11 of \cite{rsp}, any ordinary TQFT in dimension $\geq2$ can be decomposed as a finite direct sum of simple theories, which are multiplicative over connected sums in the following sense.

\begin{proposition}[{\cite[Proposition~3.2]{reutter}}]
\label{prop:mult under conn sum}
    Let $M$ and $N$ be connected $n$-dimensional manifolds with $M$ closed, and let $\mathcal{Z}$ be a simple $n$-dimensional TQFT. Then,
    \begin{equation*}
        \mathcal{Z}(M\#N)=\LL \mathcal{Z}(S^n)\RR^{-1}\mathcal{Z}(M)\mathcal{Z}(N).
    \end{equation*}
    \begin{proof}
        By removing open $n$-disks from $M$ and $N$ (away from the boundary), we define the bordisms $M'=M\setminus \Int (D^n):\varnothing\rightsquigarrow S^{n-1}$ and $N'=N\setminus \Int (D^n): \partial N \rightsquigarrow S^{n-1}$. We will denote the bordism $D^n:S^{n-1}\rightsquigarrow\varnothing$ by $\varepsilon$ since it corresponds to the counit of the Frobenius algebra $\mathcal{Z}\LL S^{n-1}\RR$. We can then decompose $M$ and $N$ as \[M : \varnothing \overset{M'}\longrightsquigarrow S^{n-1} \overset{\varepsilon}{\longrightsquigarrow} \varnothing\] and \[N : \partial N \overset{N'}{\longrightsquigarrow} S^{n-1} \overset{\varepsilon}{\longrightsquigarrow} \varnothing.\]
     \noindent We may decompose the connected sum $M\#N$ as
     \begin{equation*}
         M\#N: \varnothing \sqcup \partial N \overset{M'\sqcup N'}{\extralongrightsquigarrow} S^{n-1} \sqcup S^{n-1} \overset{\mu}{\longrightsquigarrow} S^{n-1} \overset{\varepsilon}{\longrightsquigarrow} \varnothing, 
     \end{equation*}

\noindent as is depicted below.

\begin{center}
\begin{tikzpicture} [scale=0.9]
\draw[thick, shift=({-0.25,0.5})] (-6.5,1) to[out=-90, in=-90] (-3.5,1);
\draw[thick, shift=({-0.25,0.5})] (-6.5,1) to[out=90, in=90] (-3.5,1);
\draw[thick, shift=({-0.25,0.5})] (-5.4,1.05) to[out=-50, in=-130] (-4.6,1.05);
\draw[thick, shift=({-0.25,0.5})] (-5.3,0.95) to[out=40, in=140] (-4.7,0.95);

\node at (-5.25,2) {$M$};
\node at (-5.25,-1) {$N$};
\node at (-5.25,0) {$\#$};

\draw[thick] (-6.75,-1.5) ellipse(0.25cm and 0.5cm);
\draw[thick, shift=({-0.25,-0.5})] (-6.5,-1.5) to[out=0, in=180] (-4.75,-1.92) to[out=0, in=-90] (-3.5,-1);
\draw[thick, shift=({-0.25,-0.5})] (-6.5,-0.5) to[out=0, in=180] (-4.75, -0.08) to[out= 0, in=90] (-3.5,-1);
\draw[thick, shift=({-0.25,-0.5})] (-5.8,-0.95) to[out=-60, in=-120] (-5.2,-0.95);
\draw[thick, shift=({-0.25,-0.5})] (-5.7,-1.05) to[out=50, in=130] (-5.3,-1.05);
\draw[thick, shift=({-0.25,-0.5})] (-4.8,-0.95) to[out=-60, in=-120] (-4.2,-0.95);
\draw[thick, shift=({-0.25,-0.5})] (-4.7,-1.05) to[out=50, in=130] (-4.3,-1.05);

\node at (-2,0) {$\cong$};

\draw[thick] (0,1) to[out=90, in=180] (1.5,1.88)to[out=0, in=180] (3,1.5);
\draw[thick] (0,1) to[out=-90, in=180] (1.5,0.18) to[out=0, in=180] (3,0.5) to[out=0, in=90] (3.5,0);
\draw[thick] (1.1,1.05) to[out=-50, in=-130] (1.9,1.05);
\draw[thick] (1.2,0.95) to[out=40, in=140] (1.8,0.95);

\node at (1.5,1.5) {$M'$};
\node at (1.625,-0.5) {$N'$};
\node at (4,0) {$\mu$};
\node at (5.5,0) {$\varepsilon$};

\draw[thick] (0.250,-1) ellipse(0.25cm and 0.5cm);
\draw[thick] (0.25,-0.5) to[out=0, in=180] (1.625,-0.12)to[out=0, in=180] (3,-0.5) to[out=0, in =-90] (3.5,0);
\draw[thick] (0.25,-1.5) to[out=0, in=180] (1.625,-1.82) to[out=0, in=180] (3,-1.5);

\draw[thick] (3,1) ellipse(0.25cm and 0.5cm);
\draw[thick] (3,-1) ellipse(0.25cm and 0.5cm);
\draw[thick] (5,0) ellipse(0.25cm and 0.5cm);
\draw[thick] (3,1.5) to[out=0, in=180] (5,0.5);
\draw[thick] (3,-1.5) to[out=0, in=180] (5,-0.5);
\draw[thick] (5,0.5) to[out=0, in=90] (6,0) to[out=-90, in=0] (5,-0.5);

\draw[thick] (0.825,-0.95) to[out=-60, in=-120] (1.425,-0.95);
\draw[thick] (0.925,-1.05) to[out=50, in=130] (1.325,-1.05);
\draw[thick] (1.825,-0.95) to[out=-60, in=-120] (2.425,-0.95);
\draw[thick] (1.925,-1.05) to[out=50, in=130](2.325,-1.05);
\end{tikzpicture}
\end{center} 

     Applying $\mathcal{Z}$ to $M\sqcup N$ and $M\#N$ yields the following linear maps: \[\mathcal{Z}(M\sqcup N): \mathbb{C} \otimes\mathcal{Z} \LL \partial N \RR  \xlongrightarrow{\mathcal{Z}(M')\otimes \mathcal{Z}(N')} \mathcal{Z}(S^{n-1}) \otimes \mathcal{Z}(S^{n-1}) \xlongrightarrow{\mathcal{Z}(\varepsilon)\otimes \mathcal{Z}(\varepsilon)} \mathbb{C},\] \[\mathcal{Z}(M\#N): \mathbb{C} \otimes\mathcal{Z} \LL \partial N \RR\xlongrightarrow{\mathcal{Z}(M')\otimes \mathcal{Z}(N')} \mathcal{Z}(S^{n-1}) \otimes \mathcal{Z}(S^{n-1}) \xlongrightarrow{\mathcal{Z(\mu)}}\mathcal{Z}\LL S^{n-1}\RR \xlongrightarrow{\mathcal{Z}(\varepsilon)} \mathbb{C}. \] It now remains to show that $\mathcal{Z}(\varepsilon)\otimes \mathcal{Z}(\varepsilon)$ differs from the composite \[\mathcal{Z}(S^{n-1}) \otimes \mathcal{Z}(S^{n-1}) \xlongrightarrow{\mathcal{Z(\mu)}}\mathcal{Z}\LL S^{n-1}\RR \xlongrightarrow{\mathcal{Z}(\varepsilon)} \mathbb{C}\] by a factor of $\mathcal{Z}(S^n)$. Since $\mathcal{Z}\LL S^{n-1}\RR$ is 1-dimensional, the unit $\mathcal{Z}(\upsilon)$ is invertible, so the isomorphisms of bordisms depicted below complete the proof.

     \begin{center}
         \begin{tikzpicture}
            
\node at (6.875,0) {$\cong$};
\node at (6.875,-5) {$\cong$};

\node at (2.5,1) {$\upsilon$};
\node at (4.5,-4) {$\upsilon$};

\node at (4,0) {$\mu$};

\node at (5.5,0) {$\varepsilon$};
\node at (8.5,0) {$\varepsilon$};
\node at (8.5,-6) {$\varepsilon$};
\node at (5.5,-6) {$\varepsilon$};
\node at (5.5,-4) {$\varepsilon$};

\node at (8.75,-4) {$S^n$};

\draw[thick] (3,1) ellipse(0.25cm and 0.5cm);
\draw[thick] (3,-1) ellipse(0.25cm and 0.5cm);
\draw[thick] (5,0) ellipse(0.25cm and 0.5cm);
\draw[thick] (5,0.5) to[out=0, in=90] (6,0) to[out=-90, in=0] (5,-0.5);
\draw[thick] (3,1.5) to[out=0, in=180] (5,0.5);
\draw[thick] (3,-1.5) to[out=0, in=180] (5,-0.5);
\draw[thick] (3,-0.5) to[out=0, in=-90] (3.5,0) to[out=90, in=0] (3,0.5);

\draw[thick, shift=({7,4})] (-4,-2.5) to[out=180, in=90] (-5,-3) to[out=-90, in=180] (-4,-3.5);

\draw[thick, shift=({3,0})] (5,0) ellipse(0.25cm and 0.5cm);
\draw[thick,shift=({3,0})] (5,0.5) to[out=0, in=90] (6,0) to[out=-90, in=0] (5,-0.5);

\draw[thick, shift=({9,-1})] (-4,-2.5) to[out=180, in=90] (-5,-3) to[out=-90, in=180] (-4,-3.5);
\draw[thick, shift=({0,-4})] (5,0) ellipse(0.25cm and 0.5cm);
\draw[thick,shift=({0,-4})] (5,0.5) to[out=0, in=90] (6,0) to[out=-90, in=0] (5,-0.5);
\draw[thick, shift=({0,-6})] (5,0) ellipse(0.25cm and 0.5cm);
\draw[thick,shift=({0,-6})] (5,0.5) to[out=0, in=90] (6,0) to[out=-90, in=0] (5,-0.5);

\draw[thick,shift=({3.75,-4})] (5,0) circle(1 cm);
\draw[thick,shift=({3.75,-4})] (5,0) ellipse(1 cm and 0.25 cm);
\draw[thick, shift=({3,-6})] (5,0) ellipse(0.25cm and 0.5cm);
\draw[thick,shift=({3,-6})] (5,0.5) to[out=0, in=90] (6,0) to[out=-90, in=0] (5,-0.5);
\end{tikzpicture}
     \end{center}
     \end{proof}
\end{proposition}

\begin{proposition}
\label{prop: torus invert}
     If $\mathcal{Z}$ is a simple TQFT, $\mathcal{Z}(S^k\times S^k)$ is invertible.
     \begin{proof}
       Reutter and Schommer-Pries showed (\cite[proof of Proposition 5.17]{rsp}) that the bordisms \[\LL W_{S^k}\times S^{k-1}\RR\circ\LL S^k\times D^k\RR: \varnothing \rightsquigarrow S^k\times S^{k-1} \] 
       \noindent and 
       \[\LL S^1\times S^{2k-1}\RR\#\LL S^k\times S^k\RR\#\LL S^k\times D^k\RR:\varnothing \rightsquigarrow S^k\times S^{k-1}\]
       \noindent are isomorphic, where $W_{S^k}:=\LL S^k \times S^1\RR \setminus \LL D^{k+1}\sqcup D^{k+1}\RR$. 
       
       Thus, 
             \[\mathcal{Z}\LL W_{S^k}\times S^{k-1}\RR \mathcal{Z}\LL S^k\times D^k\RR = \LL \mathcal{Z}\left(S^{2k}\right)\RR ^{-2}\mathcal{Z}\LL S^1\times S^{2k-1}\RR \mathcal{Z}\LL S^k\times S^k\RR \mathcal{Z}\LL S^k\times D^k\RR.\]

Hence, $\mathcal{Z}\LL S^k\times D^k\RR$ is an eigenvector with eigenvalue \[\LL \mathcal{Z}\LL S^{2k}\RR \RR ^{-2}\mathcal{Z}\LL S^1\times S^{2k-1}\RR \mathcal{Z}\LL S^k\times S^k\RR.\] By \cite[Proposition~3.3]{reutter} the map $\mathcal{Z}\LL W_{S^k}\times S^{k-1}\RR$ is an invertible endomorphism of the algebra $\mathcal{Z}(S^k\times S^{k-1})$. Thus, all factors of the eigenvalue, in particular $\mathcal{Z}\LL S^k \times S^k\RR$, are invertible.
     \end{proof}
\end{proposition}
It follows from Propositions \ref{prop:mult under conn sum} and \ref{prop: torus invert} that stably isomorphic closed manifolds are indistinguishable by simple TQFTs, and thus by ordinary TQFTs in dimension $\geq 2$: 

\begin{proposition}
[{\cite[Proposition~5.12]{rsp}}]
\label{prop: cant dist stab iso}
    Even-dimensional ordinary TQFTs cannot distinguish stably isomorphic closed manifolds. \qed
\end{proposition}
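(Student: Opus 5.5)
The plan is to reduce to simple theories and then compare the two stabilized manifolds via connected-sum multiplicativity. Let $M$ and $N$ be closed $2k$-dimensional manifolds with $M\#^p(S^k\times S^k)\cong N\#^p(S^k\times S^k)$. We may assume $2k\geq 2$; in dimension $0$ stable isomorphism is just isomorphism. By Corollary 5.11 of \cite{rsp}, any ordinary $2k$-dimensional TQFT $\mathcal{Z}$ decomposes as a finite direct sum $\mathcal{Z}\cong\bigoplus_i \mathcal{Z}_i$ of simple theories. On closed manifolds this gives $\mathcal{Z}(X)=\sum_i \mathcal{Z}_i(X)$. It therefore suffices to show $\mathcal{Z}_i(M)=\mathcal{Z}_i(N)$ for each simple summand. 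So fix a simple $\mathcal{Z}$.

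First treat connected $M$ and $N$. Apply Proposition \ref{prop:mult under conn sum} repeatedly, with $S^k\times S^k$ as the closed connected summand. This gives
\[
\mathcal{Z}\LL M\#^p(S^k\times S^k)\RR=\LL \mathcal{Z}(S^{2k})^{-1}\mathcal{Z}(S^k\times S^k)\RR^{p}\,\mathcal{Z}(M),
\]
and the same formula for $N$. The formula tacitly uses that $\mathcal{Z}(S^{2k})$ is invertible. This follows because $\mathcal{Z}(S^{2k})=\mathcal{Z}(\varepsilon)\mathcal{Z}(\upsilon)$, and in a one-dimensional Frobenius algebra both the unit and the counit are nonzero; the nondegeneracy of the pairing forces the counit to be nonzero. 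Proposition \ref{prop: torus invert} says $\mathcal{Z}(S^k\times S^k)$ is invertible, so the scalar factor $\LL\mathcal{Z}(S^{2k})^{-1}\mathcal{Z}(S^k\times S^k)\RR^p$ is nonzero. Since the left-hand sides agree for $M$ and $N$, we can cancel this factor and obtain $\mathcal{Z}(M)=\mathcal{Z}(N)$.

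The main obstacle is the disconnected case, because Definition \ref{def: stab iso} only makes sense once a component for the connected sum has been chosen. I would interpret stabilization componentwise. Isomorphic manifolds have matched components, and the components of $M\#^p(S^k\times S^k)$ correspond bijectively to those of $M$. We then match each component of $M$ with one of $N$ so that the matched components are stably isomorphic. Since $\mathcal{Z}$ is monoidal, $\mathcal{Z}(M)$ is the product of its values on the components, and the connected case applies to each matched pair. If the paper's convention is the literal one, with all $p$ summands attached to a single chosen component, the same cancellation argument applies to that component alone. In either case, summing over the simple summands $\mathcal{Z}_i$ finishes the proof.
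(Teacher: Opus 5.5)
Your argument for connected $M$ and $N$ is correct and follows the paper's route. You reduce to simple theories via Corollary~5.11 of \cite{rsp}, apply Proposition~\ref{prop:mult under conn sum} $p$ times, and cancel the nonzero factor $\big(\mathcal{Z}(S^{2k})^{-1}\mathcal{Z}(S^k\times S^k)\big)^p$ using Proposition~\ref{prop: torus invert}. Your check that $\mathcal{Z}(S^{2k})=\mathcal{Z}(\varepsilon)\mathcal{Z}(\upsilon)\neq 0$, via nondegeneracy of the Frobenius pairing, is a welcome detail that the paper leaves implicit.

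Your treatment of disconnected manifolds, however, contains an error, and in the ``literal'' branch the claim is actually false.

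\emph{The summing step.} Direct sums of TQFTs are formed componentwise, since the sum must still send $\varnothing$ to $\mathbb{C}$. So for $X=\bigsqcup_j X_j$ one has $\mathcal{Z}(X)=\prod_j\sum_i\mathcal{Z}_i(X_j)$. Your identity $\mathcal{Z}(X)=\sum_i\mathcal{Z}_i(X)$ holds only for connected $X$, so ``summing over the simple summands'' does not finish the disconnected case. With componentwise stabilization this is easy to repair: apply the connected result to the full $\mathcal{Z}$ on each matched pair of components, then multiply.

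\emph{The literal convention.} Here each simple $\mathcal{Z}_i$ does satisfy $\mathcal{Z}_i(M)=\mathcal{Z}_i(N)$, since its value is multiplied by $c_i^p$ whichever component is stabilized. But the isomorphism need not match the stabilized components, and the numbers $\mathcal{Z}_i(M)$ do not determine $\mathcal{Z}(M)$. A counterexample in dimension $4$:
\begin{itemize}
\item Take $M=S^4\sqcup\LL\mathbb{C}P^2\#^p(S^2\times S^2)\RR$ and $N=\mathbb{C}P^2\sqcup\#^p(S^2\times S^2)$ with $p\geq1$. Stabilizing the $S^4$ component of $M$ and the $\mathbb{C}P^2$ component of $N$ gives isomorphic manifolds.
\item Let $\mathcal{Z}=\mathcal{Z}_\lambda\oplus\mathbf{1}$, where $\mathcal{Z}_\lambda(W)=\lambda^{\chi(W)}$ is the Euler theory and $\mathbf{1}$ is the trivial theory. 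For real $\lambda\neq0$ both summands are unitary, so $\mathcal{Z}$ is ordinary by Proposition~\ref{prop: unitary are ord}.
\item Then
\[
\mathcal{Z}(M)-\mathcal{Z}(N)=(\lambda^2+1)(\lambda^{3+2p}+1)-(\lambda^3+1)(\lambda^{2+2p}+1)=(\lambda-1)\lambda^2(\lambda^{2p}-1),
\]
which is nonzero for $\lambda=2$.
\end{itemize}

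So drop the ``in either case'' sentence. State the result for connected manifolds, or for disconnected ones whose components match up in stably isomorphic pairs. The connected case is what the paper intends (its Proposition~\ref{prop:mult under conn sum} already assumes connectedness) and all it ever uses.
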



\section{Topological Results for Infinite $\bm{\pi_2}$}
\label{sec: Top simp conn}

Let $E_8$ denote the unique positive-definite, even, unimodular lattice of rank 8, and let $E_{16}$ denote the unique positive-definite, even, unimodular lattice of rank 16 that is not isometric to $E_8\oplus E_8$. By Freedman's theorem \cite{freedmanquinn} There are unique (up to homeomorphism) closed, simply connected topological 4-manifolds, denoted $\meight$ and $\msixt$, with intersection forms\footnote{We define the intersection form of a closed $2k$-manifold $M$ to be the bilinear form on $H^k(M)\times H^k(M)$ given by $(\alpha,\beta)\mapsto\langle \alpha\smile\beta,[M]\rangle$, where $[M]$ is the fundamental class of $M$.} $E_8\oplus E_8$ and $E_{16}$, respectively. Note that $\meight$ can be realized by taking the connected sum of two copies of $M^4_{E_8}$, the unique (up to homeomorphism) closed, simply connected topological $4$-manifold with intersection form $E_8$. 

\begin{proposition}
\label{prop:can't dist 88 and 16}
    Ordinary $4$-dimensional topological (non-smooth) TQFTs cannot distinguish $\meight$ and $\msixt$.
    \begin{proof}
        \sloppy Intersection forms are additive over the connected sum operation, so $\meight\#\LL S^2\times S^2\RR$ and $\msixt\#\LL S^2\times S^2\RR$ have intersection forms $E_8\oplus E_8 \oplus H$ and $E_{16}\oplus H$, respectively, where $H$ is the hyperbolic form on $\mathbb{Z}^2$. Both lattices are even, indefinite, and have the same rank and signature, and are thus isometric. By Freedman's Theorem \cite{freedmanquinn}, $\meight\#\LL S^2\times S^2\RR \cong \msixt\#\LL S^2\times S^2\RR$. Thus, $\meight$ and $\msixt$ are stably homeomorphic and are indistinguishable by ordinary $4$-dimensional topological TQFTs, by Proposition \ref{prop: cant dist stab iso}.
    \end{proof}
\end{proposition}

\begin{proposition}
    \label{prop:88 16 product not htpy eq or stab iso}
    The manifolds $\meight\times S^m$ and $\msixt\times S^m$ are not homotopy equivalent. If $m$ is even and nonzero they are also not homotopy equivalent after stabilization. 
\end{proposition}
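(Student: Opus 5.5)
The plan is to distinguish the two products by a homotopy invariant extracted from the integral cohomology ring: the cup-product pairing on degree-$2$ classes. A homotopy equivalence induces a graded ring isomorphism on $H^*(-;\mathbb{Z})$ and preserves the positive-definiteness of a form up to an overall sign, so it suffices to recover the intersection form of the $4$-dimensional factor, up to sign, from the ring. By Künneth (all groups are free), $H^*(M^4\times S^m)\cong H^*(M^4)\otimes H^*(S^m)$. Let $x\in H^m(S^m)$ denote the generator pulled back to the product.

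\textbf{Step 1 (the invariant).} For a simply connected closed manifold $X$, let $I(X)\subseteq H^4(X)$ be the image of the cup product map $H^2(X)\otimes H^2(X)\to H^4(X)$. We want a lattice $L(X)$ with a form valued in $I(X)$, defined purely ring-theoretically.
\begin{itemize}
\item For $m\neq 2$, $H^2(M^4\times S^m)=H^2(M^4)$. All products of these classes land in the summand $H^4(M^4)\otimes H^0(S^m)\cong\mathbb{Z}$, including when $m=4$, where $H^4$ has an extra summand spanned by $x$. Hence $I\cong\mathbb{Z}$, and $L=H^2$ with the pairing into $I$ is the intersection form of $M^4$, up to the sign choice of a generator of $I$.
\item For $m=2$, take $y=a+cx$ with $a\in H^2(M^4)$. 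Then $y^2=a^2+2c\,ax$. If $y^2=0$, then $a^2=0$ in $H^4(M^4)$, so $a=0$ by definiteness. Thus the elements of square zero form exactly $\mathbb{Z}x$, which is intrinsic to the ring. We then set $L=H^2/\mathbb{Z}x$, pairing into $H^4/(H^2\cdot x)\cong\mathbb{Z}$, and this recovers the intersection form again.
\end{itemize}
In all cases the invariant is a definite unimodular form isometric to $\pm(E_8\oplus E_8)$, resp.\ $\pm E_{16}$. Any ring isomorphism must carry one invariant to the other.

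\textbf{Step 2 (lattices differ).} $E_8\oplus E_8$ and $E_{16}$ are both positive definite, so no sign change can make them isometric to each other's negatives. They are not isometric to each other by definition of $E_{16}$. Concretely, the roots (norm-$2$ vectors) of $E_{16}$ form the irreducible system $D_{16}$, while those of $E_8\oplus E_8$ form the reducible system $E_8\sqcup E_8$. This proves the first claim.

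\textbf{Step 3 (stabilization).} For $m$ even and nonzero, set $\ell=2+m/2\geq 3$. Forming $X\#^p(S^\ell\times S^\ell)$ adds free classes in degree $\ell$ only. Their products with classes of positive degree below the top vanish, except pairings into the top degree $2\ell\geq 6$. Since $\ell\neq 2$, the group $H^2$ is unchanged. The only other affected case is $m=4$, where $\ell=4$ and $H^4$ acquires new summands; but products of degree-$2$ classes still land in the old part, so $I$ and the square-zero analysis are unchanged. For $m=2$ we have $\ell=3$, so $H^2$ and $H^4$ are untouched. Therefore the invariant of Step 1 survives stabilization, and Step 2 finishes the proof.

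I expect the main obstacle to be the bookkeeping in the small cases $m=2$ and $m=4$, where extra cohomology sits in degrees $2$ or $4$. That is why the invariant is defined via the image $I$ and the square-zero subgroup rather than via naive degree counts.
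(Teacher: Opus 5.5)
Your argument is correct and is essentially the paper's proof (Lemmas \ref{lem: diff int not htpy eq after cross} and \ref{lem: diff int not stab htp}). Both recover $\pm$ the intersection form of the $4$-dimensional factor from the cup product on $H^2$, use definiteness to single out $\mathbb{Z}x$ as the only square-zero subgroup when $m=2$, and observe that stabilizing by $S^\ell\times S^\ell$ with $\ell\geq 3$ leaves $H^2$ and the relevant products untouched. Your bookkeeping is a bit more careful than the paper's: taking the target to be the image $I$ handles the extra $H^4$ summand when $m=4$, and the pairing into $H^4/(H^2\cdot x)$ is cleaner than the paper's appeal to the triple cup product. (The degenerate case $m=0$, where $S^0$ doubles $H^2$, falls outside your Step 1 as stated, but it is trivial by comparing components.)
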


The proof of Proposition \ref{prop:88 16 product not htpy eq or stab iso} follows immediately from the two lemmas below.

\begin{lemma}
    \label{lem: diff int not htpy eq after cross}
    Let $M_1$ and $M_2$ be $(k-1)$-connected $2k$-manifolds with intersection forms $q_1$ and $q_2$, where $q_1$ is not isometric to $\pm q_2$. For $m\neq k$, $\ssm{M_1}$ and $\ssm{M_2}$ are not homotopy equivalent. The statement also holds for $m=k$ if $q_1$ and $q_2$ are definite.
   
    \begin{proof}
      If $m\neq k$, $S^m$ contributes nothing to $H^k(\ssm{M_i})$, so the result is immediate. If $m=k$, then, $S^k$ contributes a $\mathbb{Z}$ summand to $H^k\LL \ssm{M_i}\RR$, generated by some class $\sigma_i$, and we have canonical splittings $H^k\LL \ssm{M_i}\RR\cong H^k(M_i)\oplus \mathbb{Z}[\sigma_i]$. If $q_i$ is definite, $\mathbb{Z}[\sigma_i]$ is the only cup-square-zero subgroup of $H^k\LL \ssm{M_i}\RR$. Consider the trilinear form $Q_{M_i}$ on $H^k\LL \ssm{M_i}\RR$ given by evaluating the triple cup product on the fundamental class of $\ssm{M_i}$. This form descends to $q_i$ on the quotient $H^k(\ssm{M_i})/\mathbb{Z}[\sigma_i]\cong H^k(M_i)$. Thus, if $q_1$ is not isometric to $\pm q_2$, $Q_1$ is not isometric to $\pm Q_2$. Any homotopy equivalence $\ssm{M_1}\to \ssm{M_2}$ would yield an isometry from $Q_{M_1}$ to $\pm Q_{M_2}$, which completes the proof.
        \end{proof}
\end{lemma}
\begin{lemma}
    \label{lem: diff int not stab htp}
    Let $X_1$ and $X_2$ be non homotopy equivalent $2k$-manifolds. If $i<k$ and there are multilinear forms on $H^i(X_1)$ and $H^i(X_2)$ witnessing the inequivalence (e.g. $Q_{M_1}$ and $Q_{M_2}$ in the lemma above), then $X_1$ and $X_2$ are still not homotopy equivalent after finitely many connected sums with $S^k\times S^k$.
    \begin{proof}
        Since $k>i$, taking repeated connected sums with $S^k\times S^k$ has no effect on $H^i$.
    \end{proof}
\end{lemma}

\begin{theorem}
    \label{thm: main thm top}
    Ordinary $(4+m)$-dimensional topological (non-smooth) TQFTs fail to distinguish $\ssm{\meight}$ and $\ssm{\msixt}$.
    \begin{proof}
        Let $\mathcal{Z}$ be an ordinary $(4+m)$-dimensional topological TQFT and let $\mathcal{W}$ be the dimensional reduction of $\mathcal{Z}$ along $S^m$: \[\mathcal{W}: \topcat{4} \xlongrightarrow{-\times S^m}\topcat{4+m}\xlongrightarrow{\mathcal{Z}} \vectc.\]
\noindent By Proposition \ref{prop:can't dist 88 and 16}, $\mathcal{W}(\meight)=\mathcal{W}(\msixt)$, so $\mathcal{Z}(\ssm{\meight}) = \mathcal{Z}(\ssm{\msixt})$. 
\end{proof}
\end{theorem}


\section{Some Background in Smoothing Theory}
\label{sec:smoothing background}

\subsection{Microbundles}
\label{ss:microbundles}

Topological microbundles, a generalization of vector bundles, were introduced by Milnor in \cite{milnorpart1} in order to define an analogue of tangent bundles for topological manifolds.

\begin{definition}
 [{\cite{milnorpart1}}]
\label{def: top micro}
 A \textbf{topological} $\bm{k}$\textbf{-microbundle} $\scriptstyle \mathfrak{X}$ is a quadruple $(E,B,
 p,s)$, where $E$ and $B$ are topological spaces, and $p: E \rightarrow  B$ and $s:B \rightarrow  E$ are continuous maps subject to the following conditions.
    \begin{enumerate}
        \item The composition $ps$ is the identity on $B$.
        \item Local triviality: For every $b\in B$, there are open neighborhoods $U\subset B$ of $b$ and $V \subset E$ of $s(b)$ such that $p(V)\subset U$, $s(U)\subset V$, and there is a homeomorphism $V\rightarrow U\times \mathbb{R}^k$ such that the following diagram commutes, where $p_1$ is the projection onto the first factor and $\times \{0\}$ is the map $u\mapsto(u,0)$.
\begin{center}        
 \begin{tikzcd}
  &U \\
V \arrow{ur}{p|_V} \arrow{rr} && U\times\mathbb{R}^k \arrow[swap]{ul}{p_1} \\
&U \arrow{ul}{s|_U} \arrow[swap]{ur}{\times \{0\}}
\end{tikzcd}
 \end{center}      
    \end{enumerate}
We call $E$ the total space and $B$ the base space, and we will generally identify $B$ with $s(B)$.
\end{definition}

\begin{definition}[{\cite{milnorpart1}}]
    \label{def: micro restriction and pullback}
    Let $\exx = \LL E,B,p,s\RR$ be a topological microbundle.
    \begin{enumerate}
        \item If $A\subset B$, the \textbf{restriction} of $\exx$ to $A$, denoted $\exx|_A$ is $\LL p^{-1}(A),A,p\big|_{p^{-1}(A)},s\big|_{A}\RR$.
        \item If $f:B'\to B$ is a continuous map, the \textbf{pullback} of $B$ along $f$, denoted $f^*\exx$, is $\LL E', B', p',s'\RR$, where $E'=\{(b',e)\in B'\times E : f(b')=p(e)\}$, $p'(b',e)=b'$, and $s'(b')=(b',sf(b'))$.
    \end{enumerate}
    It is easy to verify that $\exx|_A$ and $f^*\exx$ are indeed topological microbundles.
\end{definition}

\begin{definition}[{\cite{milnorpart1}}]
\label{def: whitney sums} 
Let $\exx = \LL E,B,p,s \RR$ and $\exx' = \LL E',B',p',s' \RR$ be topological vector bundles. The \textbf{external Whitney sum}\footnote{In \cite{milnorpart1}, this is called the Cartesian product and is denoted $\exx \times \exx'$.} of $\exx$ and $\exx '$ is $\LL E\times E', B\times B', p\times p', s\times s'\RR$, denoted by $\exx \boxplus \exx'$. It is again easy to verify that $\exx \boxplus \exx'$ is a topological microbundle. If $B=B'$, we define the ordinary \textbf{Whitney sum}, denoted $\exx \oplus \exx'$, to be the pullback $\Delta^* \LL \exx \boxplus \exx'\RR$, where $\Delta$ is the diagonal map $x\mapsto (x,x)$.
     
\end{definition}

\begin{definition}
    [{\cite{milnorpart1}}] Let $X$ be a topological $n$-manifold without boundary. The \textbf{topological tangent microbundle of} $\bm{X}$ is the topological $n$-microbundle $\micro{X}:=(X\times X,X,p_1,\Delta)$. 
\end{definition}

Local triviality is shown using coordinate charts homeomorphic to $\mathbb{R}^n$, so we cannot immediately extend this definition to manifolds with boundary. Instead, we use the following construction.

\begin{definition}
    Let $X$ be a topological manifold with boundary. We define $\bm{\widehat{X}}$ to be $X$ with a half-open collar attached to the boundary: $\widehat{X} = X \cup_{\partial X\times \{0\}}\left(\partial X \times [0,1)\right)$.
\end{definition}

\begin{definition}[{\cite[Essay IV, 10.2]{ks}}]
\label{def: deltawithbound}
   \sloppy The \textbf{tangent microbundle of a manifold with boundary} $X$ is defined as $\micro{X}:=\micro{\widehat{X}}\big|_X$. Note that the total space of $\micro{X}$ is equal to $X\times \widehat{X}$.
\end{definition}

\begin{definition}
\label{def: micro morphism}
\sloppy  A \textbf{topological microbundle morphism}\footnote{This is a more general notion than what is presented by Milnor and Kirby-Siebenmann. They use definitions analogous to what we call "$k$-microbundle maps" (see Definition \ref{def: micro map}).} from $(E,B,p,s)$ to $(E',B',p',s')$ is an equivalence class of triples $\bm{F}=\LL W, W', F \RR$, where $W \subset E$ and $W' \subset E'$ are open neighborhoods of $B$ and $B'$, respectively, and $(F,f):\LL W,B\RR \to \LL W',B'\RR$ is a continuous map making the following diagram commute.

 \begin{center}        
 \begin{tikzcd}
 \\
 & B \arrow{r}{f}& B' \\
W \arrow{ur}{p} \arrow{rrr}{F} &&& W' \arrow[swap]{ul}{p'} \\
&B \arrow{ul}{s}  \arrow{r}{f} &B'\arrow[swap]{ur}{s'}
\\
\end{tikzcd}
 \end{center}     
Two such triples $\left(W_1,W_1',F_1\right)$ and $\left(W_2,W'_2,F_2\right)$ are equivalent if there exists another triple $\left(W,W',F\right) $ such that $W\subset W_i$, $W'\subset W'_i$, and $F_i \big|_W=F$. We say $\bm{F}$ is a microbundle morphism over or covering $f$. Note that a microbundle isomorphism has a representative such that $(F,f)$ is a homeomorphism. 
\end{definition}

\begin{remark}
\label{rem:restrict micro iso}
  \sloppy  A topological microbundle $(E,B,p,s)$ is isomorphic to $(V,B,p\big|_V,s)$, where $V\subset E$ is any open neighborhood of $s(B)$.
\end{remark}

\begin{definition}
    \label{def: induced map on tan}
   \sloppy A continuous map between manifolds $f:X_1\to X_2$ induces a natural microbundle morphism $\bm{D}f:\micro{X_1}\to\micro{X_2}$ over $f$, which we call the \textbf{topological micro-differential}. If $X_1$ and $X_2$ have no boundary, $\bm{D}f$ is represented by $\LL X_1\times X_1,X_2\times X_2, f\times f\RR$. If they do have boundary, then there is a canonical map $\widehat{f}:\widehat{X_1}\to\widehat{X_2}$ which induces $\bm{D}\widehat{f}:\micro{\widehat{X_1}}\to\micro{\widehat{X_2}}$. Then, $\bm{D}f=\bm{D}\widehat{f}|_{\micro{X_1}}$.
\end{definition}

\begin{definition}[{\cite{milnorpart1}}{\cite{ks}}]
\label{def: micro map}
    A \textbf{topological} $\bm{k}$\textbf{-microbundle map} between rank $k$ microbundles is a microbundle morphism that gives open embeddings on fibers.
\end{definition}

The standard way to adapt the category of microbundles to a smooth setting is to replace topological spaces and continuous maps in the definitions above with smooth manifolds and smooth maps. However, it will be useful in our case to weaken this definition to allow for a notion of smooth microbundles over arbitrary topological spaces. Our definition mimics that of vector bundles over topological spaces, whose transition maps are linear in fiber directions. Smooth microbundles, defined precisely below, are topological microbundles whose transition maps are smooth in fiber directions. 

\begin{definition}[{\cite{lurienotes}}]
    \label{def: smooth micro}
   \sloppy A \textbf{smooth} $\bm{k}$\textbf{-microbundle} over a topological space $B$ is a topological $k$-microbundle $\LL E,B,p,s\RR$ admitting a maximal atlas of charts around $s(B)$ meeting the following compatibility condition. Let $b$ be a point in $B$, $U\subset B$ an open neighborhood of $b$, and $V,V'\subset E$ locally trivial neighborhoods over $U$ with trivializations $f:V\xrightarrow{\cong} U\times \mathbb{R}^k$ and $f':V'\xrightarrow{\cong} U\times \mathbb{R}^k$. Pick an $\varepsilon>0$ such that $f^{-1}(U\times B^k_\varepsilon)\subset V'$, where $B^k_\varepsilon$ is the open ball of radius $\varepsilon$ centered at the origin. We may identify $\mathbb{R}^k$ with $B^k_\varepsilon$. Then, for all $b\in U$, $ f'\circ f$ gives an origin-preserving embedding $\mathbb{R}^k\to \mathbb{R}^k$. We require that this embedding be smooth for each $b\in U$. 
\end{definition}

\begin{definition}
    A \textbf{smooth microbundle morphism} is a topological microbundle morphism between the underlying topological microbundles that gives smooth maps on fibers. \textbf{Smooth} $\bm{k}$\textbf{-microbundle maps} are defined analogously. 
\end{definition}

\begin{remark}
\label{rem: induced by smooth is smooth}
    It is straightforward to show that restrictions and pullbacks of smooth microbundles are again smooth microbundles.
\end{remark}

\begin{definition}
    If $X$ is a manifold with smooth structure $\Sigma$, the \textbf{smooth tangent microbundle}, denoted $\tau_{X_\Sigma}$, is the smooth microbundle given by $\micro{X}$ together with the atlas of charts determined by the product smooth structure on $X\times X$.
\end{definition}

\begin{remark}
     If $f:X_1\to X_2$ is a smooth map between smooth manifolds, then the topological micro-differential defined in Definition \ref{def: induced map on tan} is a smooth microbundle morphism.
\end{remark}

\begin{definition} [{\cite{milnorpart1}},{\cite[Example~5]{lurienotes}}]
    Let $E\xrightarrow{p} B$  be a rank $k$ vector bundle. Taking $s:B\rightarrow E$ to be the zero-section, $(E,B,p,s)$ is a topological $k$-microbundle over $B$, called the \textbf{underlying topological microbundle} of the vector bundle. If we add the data of the atlas of local trivializations of the vector bundle, we get a smooth microbundle, which we call the \textbf{underlying smooth microbundle}.
\end{definition}

\begin{proposition}[{\cite[Example~5]{lurienotes}}]
\label{prop: sm mb iso to vb}
    Let $B$ be paracompact. Then, there is a bijection between isomorphism classes of smooth $k$-microbundles over $B$ and isomorphism classes of rank $k$ vector bundles over $B$.
    \begin{proof}

Let $\LL E,B,p,s\RR$ be a smooth microbundle. By Remark \ref{rem:restrict micro iso}, we may assume that $E$ is entirely covered by an atlas of charts meeting the smoothness condition laid out in Definition \ref{def: smooth micro}. We define the relative tangent bundle $T_{E/B}$ over $E$ as follows: The atlas of charts gives us a family of smooth structures on the microbundle fibers which vary continuously with respect to $b\in B$. For $e\in E$, let $T_{E/B}\big|_e$ be the space of derivations at $e$ of smooth functions on $p^{-1}(p(e))$, and let $T_{E/B}={\bigsqcup_{e\in E}}T_{E/B}\big|_e$, equipped with the obvious projection map. The atlas of charts on $E$ determines a family of local trivializations whose transition functions are continuous and are linear isomorphisms on fibers, yielding a vector bundle structure on $T_{E/B}$. The pullback $s^*T_{E/B}$ is then a vector bundle over $B$. The map $f:(E,B,p,s)\mapsto s^*T_{E/B}$ determines a bijection on isomorphism classes: 

\emph{Well-Defined}: If two smooth microbundles over $B$ are isomorphic, the pushforward gives a vector bundle isomorphism on the images under $f$.

\emph{Surjective}: Let $E\xrightarrow{p}B$ be a vector bundle with underlying smooth microbundle $\exx$. Then, $f(\exx)$ is canonically isomorphic to $E\xrightarrow{p}B$.

\emph{Injective}: Let $\LL E,B,p,s\RR$ be a smooth microbundle. Since $B$ is paracompact, we may choose a continuously varying (with respect to $B$) family of Riemannian metrics on each fiber of $p$. This determines a smooth microbundle isomorphism $\bm{F}:s^*T_{E/B}\to \LL E,B,p,s\RR$ over the identity by taking the exponential map on each fiber. Let $\LL E',B,p',s'\RR$ be another smooth microbundle over $B$. We may define an analogous smooth microbundle isomorphism $\bm{F}':(s')^*T_{E'/B}\to\LL E',B,p',s'\RR$. If there is a vector bundle isomorphism $s^*T_{E/B}\xrightarrow{\cong}(s')^*T_{E'/B}$, then, since vector bundle isomorphisms are smooth microbunde isomorphisms if we forget linear structure, we may construct the following smooth microbundle isomorphism: \[\LL E,B,p,s\RR\xrightarrow{\bm{F}^{-1}}s^*T_{E/B}\xrightarrow{\cong}(s')^*T_{E'/B}\xrightarrow{\bm{F}'}\LL E',B,p',s'\RR.\]
    \end{proof}
\end{proposition} 

\emph{Conventions:}
We use lowercase Fraktur letters to denote topological microbundles and lower case Greek letters to denote smooth microbundles. We will use bold capital Roman letters for microbundle morphisms. For a representative of a microbundle morphism $\bm{F}$, we will typically use $F$ to denote the corresponding map between (subsets of) total spaces and $f$ to denote the map between base spaces. A microbundle morphism between smooth microbundles will be assumed to be smooth unless otherwise specified. Suppose $\exx$ is a topological microbundle over $B$ and $\bm{G}: \exx \to \exx'$ is a microbundle morphism. If $A\subset B$, we write $\bm{G}|_A$ to denote $\bm{G}|_{\exx|_A}$.

\subsection{Classifying Spaces for Microbundles and the Kirby-Siebenmann Classification of Manifold Structures}
\label{ss:KS background}

    We define $\TOPn{n}$ to be the topological group of basepoint-preserving homeomorphisms from $\mathbb{R}^n$ to itself with the compact-open topology. Its classifying space $\bTOPn{n}$ classifies topological $n$-microbundles in the following relative sense. 

\begin{theorem}
[{\cite[Essay~IV~Proposition~8.1]{ks}}]
    \label{thm: exist class spaces}
    There exists an $n$-dimensional topological microbundle $\gamtopn{n}$ over $\bTOPn{n}$ satisfying the following universality condition.
    \\ 
    Suppose we have the following:
    \begin{enumerate}
        \item A Euclidean neighborhood retract\footnote{A topological space $X$ is a Euclidean neighborhood retract (ENR) if there exists a closed embedding $i:X\to\mathbb{R}^N$ for some $N$ and an open neighborhood $U$ of $i(X)$ such that $i(X)$ is a retract of $U$. In particular, topological manifolds are ENRs (see \cite{enr}, Corolary 4.6).} $M$,
        \item An open neighborhood $U$ of a closed subset $C$ in $M$,
        \item A topological $n$-microbundle $\exx$ over $M$,
        \item A topological $n$-microbundle map $\bm{F}_0:\exx |_U\to \gamtopn{n}$.
    \end{enumerate}
    Then, there is a topological $n$-microbundle map $\bm{F}:\exx \to \gamtopn{n}$ that agrees with $\bm{F}_0$ near $C$.
\end{theorem}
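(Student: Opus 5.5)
We construct $\bTOPn{n}$ and $\gamtopn{n}$ by the Milnor-style construction for fibre bundles and then deduce the relative universality from the homotopy classification of bundles. The approach has three stages.

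First, we replace microbundles by genuine bundles. By the Kister--Mazur theorem, every topological $n$-microbundle $\exx$ over an ENR $M$ contains an $\mathbb{R}^n$-bundle $E_0\subset E$ with zero section $s$. This bundle is a neighborhood of $s(M)$ and has structure group $\TOPn{n}$. It is unique up to isotopy, and any germ of microbundle map is represented near the zero section by such a bundle embedding. We let $\gamtopn{n}$ be the underlying microbundle of the universal $\mathbb{R}^n$-bundle
\[
E\TOPn{n}\times_{\TOPn{n}}\mathbb{R}^n \to \bTOPn{n},
\]
where $E\TOPn{n}$ is Milnor's contractible free $\TOPn{n}$-space. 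Since $\TOPn{n}$ acts by origin-preserving homeomorphisms, this bundle has a canonical zero section, so it is indeed a microbundle.

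Second, we translate the extension problem. A microbundle map $\exx\to\gamtopn{n}$ is a fibrewise open embedding onto a neighborhood of the zero section. Up to the germ equivalence of Definition \ref{def: micro morphism}, and after Kister--Mazur, it is the same as an $\mathbb{R}^n$-bundle map $E_0\to \gamtopn{n}$, that is, a fibrewise homeomorphism. Such bundle maps covering some map $M\to\bTOPn{n}$ correspond to sections of the associated bundle
\[
P\times_{\TOPn{n}} E\TOPn{n}\to M ,
\]
where $P$ is the principal $\TOPn{n}$-bundle of $E_0$. This associated bundle has contractible fibre $E\TOPn{n}$. The given map $\bm{F}_0$ on $U$ yields a section over $U$, after shrinking $E_0$ over $U$ so that $\bm{F}_0$ is a genuine fibrewise homeomorphism. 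To arrange this we use the isotopy uniqueness in Kister--Mazur, together with a collar argument near $C$.

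Third, we extend the section. Since $M$ is an ENR, it is metrizable and locally contractible, and it is a retract of an open subset of $\mathbb{R}^N$. A section over a closed neighborhood $C'\subset U$ of $C$ therefore extends over $M$ by obstruction theory for a bundle with contractible fibre. Alternatively, we can use the fact that a fibration with contractible fibres over a paracompact ENR has the section extension property: pull the bundle back to a neighborhood retract in $\mathbb{R}^N$, which is locally finite-dimensional, and extend cell by cell over a triangulation there. This produces $\bm{F}$ agreeing with $\bm{F}_0$ on $C'$, hence near $C$.

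The main obstacle is the second stage. A microbundle map is only a germ of fibrewise open embeddings, not a fibrewise homeomorphism, so we must isotope $\bm{F}_0$ (rel a smaller neighborhood of $C$) to a bundle isomorphism between the Kister--Mazur bundles. We would try to do this using the relative (parametrized) form of the Kister--Mazur theorem, which supplies a canonical isotopy from a fibrewise embedding to a fibrewise homeomorphism, fixed where the embedding is already onto.
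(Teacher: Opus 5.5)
Your proposal is correct and is essentially the standard argument behind this cited result. The paper gives no proof of its own, and Kirby--Siebenmann obtain the TOP case in the same way: pass from microbundles to $\mathbb{R}^n$-bundles via Kister--Mazur, take the universal $\mathbb{R}^n$-bundle over Milnor's $\bTOPn{n}$, and extend sections of a bundle with contractible fibre over an ENR.

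The stage-two obstacle you flag is milder than you fear. Kister's isotopies are the identity on a neighbourhood of the zero section, so they never change germs. Applying Kister over $U$ twice, first to shrink $E_0|_U$ into the domain of $\bm{F}_0$ and then to enlarge the image to all of $f_0^*\gamtopn{n}$, turns $\bm{F}_0$ into a genuine bundle isomorphism on $E_0|_U$ with the same germ. No base-relative form of Kister--Mazur is needed.
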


Taking the colimit over $n$ yields the stable topological microbundle $\gamtop$ over the classifying space $\BTOP$, which stably classifies topological microbundles.

By Proposition \ref{prop: sm mb iso to vb}, smooth $n$-microbundles are classified by the universal vector bundle $\gamon{n}$ over the space $\bon{n}$ and are stably classified by the universal stable vector bundle $\gamo$ over $\bo$.\footnote{In \cite{ks}, Kirby and Siebenmann use $\BDIFF(n)$, defined as the Grassmannian of $n$-planes in $\mathbb{R}^{n+k}$, for a fixed, large $k$, and a smooth microbundle which is universal for base spaces of dimension $<k$. However, since smooth microbundles are equivalent to vector bundles, we use $\bon{n}$ and the universal vector bundle.}

 The forgetful map $\On{n}\to\TOPn{n}$ induces the following topological classifying map of $\gamon{n}$: 
    
    \begin{center}
        \begin{tikzcd}
            \gamon{n}\arrow{d} \arrow{r}{\bm{J}_{\On{n}}}& \gamtopn{n}\arrow{d}\\
            \bon{n} \arrow{r}{j_{\On{n}}} & \bTOPn{n}
        \end{tikzcd}
    \end{center}
     We also have the stable version:

 \begin{center}
        \begin{tikzcd}
            \gamo\arrow{d} \arrow{r}{\bm{J}_{\oo}}& \gamtop\arrow{d}\\
            \bo \arrow{r}{j_{\oo}} & \BTOP
        \end{tikzcd}
    \end{center}

     Let $M$ be a topological manifold and let $a:M\to\BTOP$ be a stable classifying map of $\micro{M}$. In high dimensions, the ability to endow $M$ with a smooth structure is equivalent to the existence of a lift of $a$ to $\bo$ through $j_{\oo}$.

\begin{theorem}
  [{\cite[Essay IV, Theorem 10.1]{ks}}]
    \label{thm:KS10.1 sm version}
    Let $M$ be an $n$-dimensional topological manifold and $U$ an open neighborhood of a closed subset $C\subset M$. Suppose we have the following:
    \begin{enumerate}
        \item A smooth structure $\Sigma$ on $U$,
        \item A smooth\footnote{Since $\micro{M}\big|_U$ is isomorphic to $\micro{U}$ by Remark \ref{rem:restrict micro iso}, $\Sigma$ determines a smooth microbundle structure on $\micro{M}\big|_U$.} $n$-microbundle map $\bm{A}_0:\micro{M}\big|_U\to \gamon{n}$ over a map $a_0:U\to\bon{n}$,
        \item A topological $n$-microbundle map $\bm{A}:\micro{M}\to \gamtopn{n}$ over a map $a:M\to\bTOPn{n}$ such that the composition $\bm{J}_{\On{n}}\bm{A}_0$ is equal to $\bm{A}$ near $C$.
    \end{enumerate}
    \noindent If $n\geq6$ or if $n=5$ with $\partial M \subset C$, then there is a bijective correspondence between concordance classes of smooth structures on $M$ extending $\Sigma$ and vertical homotopy classes of lifts (relative to $C$) of $a:M\to\bTOPn{n} \subset \BTOP$ to $\bo$ equal to $a_0:U\to\bon{n}\subset\bo$ near $C$. Let $\Gamma$ be a smooth structure on $M$ extending $\Sigma$. Under this bijection, the concordance class of $\Gamma$ is sent to the vertical homotopy class of lifts that classifies the smooth tangent microbundle of $M_\Gamma$.
\end{theorem}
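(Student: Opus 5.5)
This statement is cited from Kirby and Siebenmann, so I will not rederive it. Instead, I plan to show how it follows from the standard machinery of \cite[Essay IV]{ks}, adapted to the microbundle conventions of Section \ref{ss:microbundles}.

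\textbf{Step 1: the unstable statement (the Product Structure Theorem).} For $n\geq 6$, or $n=5$ with $\partial M\subset C$, there is a bijection between concordance classes of smooth structures on $M$ extending $\Sigma$ and vertical homotopy classes of lifts of the tangent classifying map $a:M\to\bTOPn{n}$ to $\bon{n}$, relative to $a_0$ near $C$.
\begin{itemize}
\item Given a smooth structure $\Gamma$, the smooth tangent microbundle $\tau_{M_\Gamma}$ is a smooth microbundle refining $\micro{M}$.
\item Proposition \ref{prop: sm mb iso to vb} identifies $\tau_{M_\Gamma}$ with a vector bundle.
\item The universality of $\gamon{n}$, together with Theorem \ref{thm: exist class spaces} applied relative to $C$, gives a lift agreeing with $a_0$ near $C$.
\item A concordance of smooth structures on $M\times I$ produces a vertical homotopy of lifts.
\end{itemize}
So the map from structures to lifts is well defined.

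\textbf{Step 2: surjectivity and injectivity.}
\begin{itemize}
\item \emph{Surjectivity.} I would use the Kirby--Siebenmann handle-by-handle argument. Over each coordinate chart, a lift to $\bon{n}$ amounts to a vector bundle reduction of $\micro{\mathbb{R}^n}$. The Product Structure Theorem ($n\geq 5$, or $n+q\geq 6$ after crossing with $\mathbb{R}^q$) lets one straighten the lift into an honest smooth structure.
\item \emph{Gluing.} One proceeds by induction over a countable cover. The relative form, fixing things near $C$, is what permits extension chart by chart.
\item \emph{Injectivity.} This is the same argument applied to $M\times I$ relative to $M\times\{0,1\}$. This is exactly why the hypothesis $\partial M\subset C$ is needed in dimension $5$: the manifold $M\times I$ has dimension $6$, but boundary control is required.
\end{itemize}

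\textbf{Step 3: stabilization.} The remaining step is to replace $\bTOPn{n}\to\BTOP$ and $\bon{n}\to\bo$ by their stable versions. Here I would use two inputs:
\begin{itemize}
\item the stability theorem, that $\TOP/\oo$ is the colimit of $\TOPn{n}/\On{n}$ with $\pi_i(\TOP/\oo,\TOPn{n}/\On{n})=0$ for $i\le n$ when $n\geq5$;
\item the fact that $M$ has the homotopy type of an $n$-dimensional CW complex.
\end{itemize}
Obstruction theory then shows that vertical homotopy classes of unstable and stable lifts rel $C$ agree. The final clause of the statement follows by construction, since the lift attached to $\Gamma$ is the classifying map of $\tau_{M_\Gamma}$.

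The main obstacle is Step 2. The genuinely deep input there is the Product Structure Theorem and the torus trick underlying it, and I would simply invoke it from \cite{ks} rather than reprove it.
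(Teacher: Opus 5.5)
The paper gives no argument of its own for this statement; it simply cites Kirby--Siebenmann. So deferring Steps~1--2 to \cite[Essay~IV]{ks} is in line with the paper.

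The one step you actually argue, the stabilization in Step~3, is off by one. Vanishing of $\pi_i(\TOP/\oo,\TOPn{n}/\On{n})$ for $i\le n$, over a base built from cells of dimension $\le n$ rel $C$, only gives \emph{surjectivity}: every stable lift is vertically homotopic rel $C$ to an unstable one. For injectivity you must compress a \emph{stable} vertical homotopy between two unstable lifts into the unstable fibration. Such a homotopy is a section over $M\times I$, rel $M\times\partial I$ and rel a neighborhood of $C\times I$. That relative complex has $(n+1)$-cells, so you also need $\pi_{n+1}(\TOP/\oo,\TOPn{n}/\On{n})=0$, which your stated range does not supply. The input that does the job is the $(n+2)$-connectivity of $\TOPn{n}/\On{n}\to\TOP/\oo$ for $n\geq5$ (Theorem~\ref{thm: incl conn}, from Essay~V of Kirby--Siebenmann). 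It gives both halves.

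Separately, your account of the $n=5$ hypothesis is misplaced. The condition $\partial M\subset C$ is needed already for \emph{existence}, not just in the $M\times I$ injectivity argument. For example, take $M=\meight\times[0,1]$ and $C=\varnothing$.
\begin{itemize}
\item Since $\ks(\meight)=0$, and $\ks$ is the only obstruction in this range (Remark~\ref{rem: KS only obst}), the tangent classifying map lifts to $\bo$.
\item Yet $M$ has no smooth structure: a smoothing would smooth the boundary copies of $\meight$, contradicting Donaldson's theorem (Remark~\ref{rem: donaldson}).
\end{itemize}
So the bijection genuinely fails without the boundary hypothesis, and it fails on the surjectivity side.
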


\begin{theorem} [{\cite[Essay~V,~Theorem~5.5]{ks}}] 
\label{thm: ks V 5.5}
    The homotopy groups $\pi_i\LL \TOP/\oo\RR$ of the homotopy fiber of $j_{\oo}$ are isomorphic to $\pi_i\LL\kk\LL\mathbb{Z}/2,3\RR\RR$ for $i\leq 6$ and $\Theta_i$ for $i\geq 5$, where $\Theta_i$ is the group of h-cobordism classes of homotopy $n$-spheres under connected sum.  
\end{theorem}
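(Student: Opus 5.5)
The plan is to factor the map $j_{\oo}:\bo\to\BTOP$ through $\mathrm{BPL}$ and compute $\TOP/\oo$ from the two pieces $\mathrm{PL}/\oo$ and $\TOP/\mathrm{PL}$. The forgetful maps $\oo\to\mathrm{PL}\to\TOP$ give a homotopy fibration
\[\mathrm{PL}/\oo\longrightarrow \TOP/\oo\longrightarrow \TOP/\mathrm{PL},\]
and hence a long exact sequence in homotopy groups,
\[\cdots\to\pi_{i+1}(\TOP/\mathrm{PL})\to\pi_i(\mathrm{PL}/\oo)\to\pi_i(\TOP/\oo)\to\pi_i(\TOP/\mathrm{PL})\to\pi_{i-1}(\mathrm{PL}/\oo)\to\cdots.\]
The statement then follows from two separate inputs about the fiber and the base.

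\emph{Input 1, the base.} We show $\TOP/\mathrm{PL}\simeq\kk(\mathbb{Z}/2,3)$. This is the Kirby--Siebenmann computation. Using the torus trick together with handle straightening, one reduces the homotopy groups of $\TOP/\mathrm{PL}$ to PL structures on $T^n\times D^k$ relative to the boundary, in dimensions $\geq 5$. These structures are in turn classified by homotopy tori. Wall--Hsiang--Shaneson show that the homotopy tori are detected only by $H^3(T^n;\mathbb{Z}/2)$. This gives $\pi_3=\mathbb{Z}/2$ and $\pi_i=0$ otherwise, with the nontriviality of $\pi_3$ witnessed via Rokhlin's theorem. Since there is a single nonzero homotopy group, the space is an Eilenberg--MacLane space.

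\emph{Input 2, the fiber.} We show $\pi_i(\mathrm{PL}/\oo)\cong\Gamma_i$, the group of smooth structures on $S^i$ modulo those extending over $D^{i+1}$. This is Hirsch--Mazur smoothing theory. We then identify $\Gamma_i\cong\Theta_i$ for $i\geq5$, by the $h$-cobordism theorem and Smale's work. For small $i$ we use $\Gamma_i=0$ for $i\leq 6$:
\begin{itemize}
\item $\Gamma_1=\Gamma_2=\Gamma_3=0$ classically (Munkres, Smale);
\item $\Gamma_4=0$ by Cerf;
\item $\Gamma_5=\Gamma_6=0$ by Kervaire--Milnor, since $\Theta_5=\Theta_6=0$.
\end{itemize}

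Granting these, the exact sequence finishes the argument. For $i\leq6$, the groups $\pi_i(\mathrm{PL}/\oo)$ and $\pi_{i-1}(\mathrm{PL}/\oo)$ both vanish, so $\pi_i(\TOP/\oo)\cong\pi_i(\TOP/\mathrm{PL})=\pi_i\kk(\mathbb{Z}/2,3)$. For $i\geq5$, both $\pi_i(\TOP/\mathrm{PL})$ and $\pi_{i+1}(\TOP/\mathrm{PL})$ vanish, so $\pi_i(\TOP/\oo)\cong\pi_i(\mathrm{PL}/\oo)\cong\Theta_i$. In the overlap $i=5,6$ the two descriptions agree, since both groups are $0$.

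The main obstacle is Input 1. Both the identification of $\pi_3(\TOP/\mathrm{PL})$ as $\mathbb{Z}/2$ and the vanishing of the other groups rest on the full torus-trick machinery and the classification of homotopy tori. Here I would simply invoke \cite{ks} (Essay V) rather than reprove it; the deduction from the exact sequence is routine.
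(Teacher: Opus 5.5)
Your argument is correct. The paper gives no proof of its own here; it simply quotes Kirby--Siebenmann. Your derivation is the standard route by which that cited result is obtained: the long exact sequence of $\mathrm{PL}/\oo\to\TOP/\oo\to\TOP/\mathrm{PL}$, combined with $\TOP/\mathrm{PL}\simeq\kk(\mathbb{Z}/2,3)$, $\pi_i(\mathrm{PL}/\oo)\cong\Gamma_i$, $\Gamma_i=0$ for $i\leq 6$, and $\Gamma_i\cong\Theta_i$ for $i\geq 5$. As in the paper, the real content is the black-boxed computation of $\TOP/\mathrm{PL}$, and the rest is as routine as you say. (In your sketch of that computation, the homotopy-tori input also needs the finite-covering trick to kill $\pi_k$ for $k<3$.)
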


\begin{remark}
\label{rem: KS only obst}
Suppose $n\leq 7$. If we have the data described in Theorem \ref{thm:KS10.1 sm version}, the only obstruction to having a lift relative to $C$ of $a:M\to\bTOPn{n} \subset \BTOP$ to $\bo$ is a class $\ks\LL M,C\RR \in H^4\LL M,C; \mathbb{Z}/2\RR$, called the Kirby-Siebenmann class. If $n=6$ or $7$ (or $5$ with $C\subset \partial M$), then, by Theorem \ref{thm:KS10.1 sm version}, $\ks\LL M,C\RR$ is the only obstruction to extending $\Sigma$ to a smooth structure on $M$.
\end{remark}


\section{Formally Smooth Results for Infinite $\bm{\pi_2}$}
\label{sec:FS}

\subsection{Formally Smooth Manifolds}
\label{ss: FS manifolds}

\begin{definition}
    \label{def: fs man}
    A \textbf{formally smooth (FS)} $\bm{n}$\textbf{-manifold} is a tuple $\LL X,\xi_X,\bm{G}_X, \bm{H}_{\partial X}\RR$, where
    
    \begin{itemize}
        \item $X$ is a topological $n$-manifold with (possibly empty) boundary $\partial X \xhookrightarrow{\iota}X$,
        
        \item $\xi_X$ is a smooth $n$-microbundle over $X$,
        
        \item $\bm{G}_X:\xi_X \rightarrow \micro{X}$ is a topological microbundle isomorphism over the identity on $X$,
        
        \item $\bm{H}_{\partial X}:\xi_{\partial X}\oplus\varepsilon \xrightarrow{\cong}\xi_X\big|_{\partial X}$ is a smooth microbundle isomorphism, where $\xi_{\partial X}$ is a smooth $(n-1)$-microbundle over $\partial X$ and $\varepsilon$ is the trivial smooth line microbundle bundle,
    \end{itemize}
    
\noindent subject to the following condition. Since $\micro{X}\big|_{\partial X}=\partial X\times\widehat{X}=\partial X \times \LL X \cup_{\partial X \times \{0\}} \partial X\times[0,1)\RR$, we have a natural decomposition \[\LL \micro{X}|_{\partial X}\RR \setminus\micro{\partial X} = \underbrace{\big( \partial X \times\LL X\setminus\partial X\RR \big)}_{=:\LL \micro{X}|_{\partial X}\RR^-} \sqcup \underbrace{\big( \partial X \times \LL \partial X \times (0,1)\RR\big)}_{=:\LL \micro{X}|_{\partial X}\RR^+}.\] Viewing $\xi_{\partial X}$ as a submicrobundle of $\xi_{\partial X}\oplus \varepsilon$ in the obvious way, we also have a natural decomposition \[\LL \xi_X|_{\partial X}\RR \setminus \bm{H}_{\partial X}\LL\xi_{\partial X}\RR = \underbrace{\bm{H}_{\partial X}\left(\xi_{\partial X} \oplus\varepsilon^-\right)}_{=:\LL \xi_X|_{\partial X}\RR^-}\sqcup \underbrace{\bm{H}_{\partial X}\left(\xi_{\partial X} \oplus\varepsilon^+\right)}_{=:\LL  \xi_X|_{\partial X}\RR^+},\] where $\varepsilon^-$ and $\varepsilon^+$ are the negative and positive components of $\varepsilon$, respectively. We require that $\bm{G}_{X}|_{\partial X}$ send $\LL \xi_X|_{\partial X}\RR^-$ to $\LL \micro{X}|_{\partial X}\RR^-$ and $\LL  \xi_X|_{\partial X}\RR^+$ to $\LL \micro{X}|_{\partial X}\RR^+$. We then define the boundary of $\LL X,\xi_X,\bm{G}_X, \bm{H}_{\partial X}\RR$ to be $\LL \partial X, \xi_{\partial X},\bm{G}_{\partial X}\RR$, where $\bm{G}_{\partial X}: \xi_{\partial X}\to \micro{\partial X}$ is given by $\bm{G}_X\bm{H}_{\partial X}\big|_{\xi_{\partial X}}$.

\end{definition}

\begin{remark}
\label{rem: smooth has canon FS}
    Let $X$ be a smooth manifold equipped with a collar neighborhood of its boundary $N\xrightarrow{\cong}\partial X\times I$. Then, $X$ has a canonical FS structure, where $\xi_X$ is the smooth tangent microbundle and $\bm{G}_X$ is the identity. Taking $\xi_{\partial X}$ to be the smooth tangent microbundle of $\partial X$, the collar neighborhood $N$ determines the smooth microbundle isomorphism $\bm{H}_{\partial X}$.
\end{remark}

We will typically suppress boundary data in our notation and write $(X,\xi_X,\bm{G}_X)$ for an FS manifold with or without boundary, unless it is unclear to do so.

\begin{definition}
    \label{def:fs morphism}
    An \textbf{FS morphism} from $\LL X_1,\xi_{X_1},\bm{G}_{X_1}\RR$ to $\LL X_2,\xi_{X_2},\bm{G}_{X_2}\RR$ is a smooth microbundle morphism $\bm{F}:\xi_{X_1}\to \xi_{X_2}$ over a continuous map $f:X_1 \to X_2$, such that the following diagram commutes.

\begin{center}
 \begin{tikzcd}

 \micro{X_1} \arrow{rrrrr}{\bm{D}f} \arrow[swap]{dddr} &&&&&\micro{X_2} \arrow{dddl}\\
 & \xi_{X_1} \arrow[swap]{ul}{\bm{G}_{X_1}} \arrow{dd} \arrow{rrr}{\bm{F}}&&& \xi_{X_2} \arrow{ur}{\bm{G}_{X_2}} \arrow{dd} \\
 \\
 &X_1 \arrow{rrr}{f} &&& X_2

 \end{tikzcd}
 \end{center}  

    \noindent Composition is well-defined via appropriate restrictions of the domains and codomains of the representative maps between total spaces. We note that an \textbf{FS isomorphism} is an FS morphism such that $\bm{F}$ is a smooth microbundle isomorphism over a homeomorphism $f$.
\end{definition}

\begin{proposition}
\label{prop: homeo is fs iso}
    Let $X_1$ and $X_2$ be homeomorphic topological manifolds. If $X_2$ has an FS structure, then $X_1$ has an FS structure such that they are FS isomorphic.

    \begin{proof}
            
    Let $\LL X_2, \xi_{X_2}, \bm{G}_{X_2}\RR$ be an FS structure on $X_2$ and let $f:X_1\to X_2$ be a homeomorphism. Define $\xi_{X_1}$ as the pullback smooth microbundle $f^*\xi_{X_2}$, and define $\bm{F}$ to be the induced smooth microbundle isomorphism over $f$. Then, there is an FS structure $\LL X_1,\xi_{X_1},\bm{G}_{X_1}\RR$, where $\bm{G}_{X_1}=\bm{D}f^{-1}\bm{G}_{X_2}\bm{F}$. By construction, $\bm{F}$ constitutes an FS isomorphism from $\LL X_1,\xi_{X_1},\bm{G}_{X_1}\RR$ to $\LL X_2,\xi_{X_2},\bm{G}_{X_2}\RR$.
    \end{proof}
\end{proposition}

\begin{proposition}
\label{prop: product of FS}
    The Cartesian product of two FS manifolds, with at most one having nonempty boundary, has a natural FS structure.
    \begin{proof}
      \sloppy  Let $\LL X_1,\xi_{X_1},\bm{G}_{X_1}\RR$ and $\LL X_2,\xi_{X_2},\bm{G}_{X_2}\RR$ be FS manifolds. Then, $X_1\times X_2$ has a natural FS structure $\LL X_1\times X_2, \xi_{X_1}\boxplus  \xi_{X_2},\bm{G}_{X_1}\boxplus \bm{G}_{X_2}\RR$. Suppose $\LL X_2,\xi_{X_2},\bm{G}_{X_2}\RR$ has boundary $\LL \partial X_2,\xi_{\partial X_2},\bm{G}_{\partial X_2}\RR$ with a smooth microbundle isomorphism $\bm{H}_{\partial X_2}: \xi_{\partial X_2}\oplus \varepsilon\to \xi_{X_2}|_{\partial X_2}$. Then, the boundary of  $\LL X_1\times X_2, \xi_{X_1}\boxplus \xi_{X_2},\bm{G}_{X_1}\boxplus \bm{G}_{X_2}\RR$ is $\LL X_1\times\partial X_2, \xi_{ X_1}\boxplus \xi_{\partial X_2},\bm{G}_{X_1}\boxplus \bm{G}_{\partial X_2}\RR$ with a smooth microbundle isomorphism $\bm{H}_{\partial \LL X_1\times X_2\RR}=\mathrm{id}_{\xi_{X_1}}\boxplus \bm{H}_{\partial X_2}: \xi_{X_1}\boxplus \LL \xi_{\partial X_2}\oplus \varepsilon\RR\to \xi_{X_1}\boxplus \xi_{X_2}|_{\partial X_2}$.
    \end{proof}
\end{proposition}

\begin{proposition}
\label{prop: triv ks is bts}
    A topological manifold of dimension $\leq 7$ can be endowed with an FS structure if and only if its Kirby-Siebenmann class vanishes. 
\end{proposition}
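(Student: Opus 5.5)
The plan is to reduce the statement to a lifting problem and then use the obstruction theory recorded in Theorem \ref{thm: ks V 5.5} and Remark \ref{rem: KS only obst}. Let $X$ be a topological $n$-manifold with $n\leq 7$.

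By Proposition \ref{prop: sm mb iso to vb} and Theorem \ref{thm: exist class spaces}, giving a smooth microbundle $\xi_X$ together with a topological isomorphism $\bm{G}_X:\xi_X\to\micro{X}$ is the same as giving a lift of a classifying map $a:X\to\bTOPn{n}$ of $\micro{X}$ through $j_{\On{n}}$. Concretely, a vector bundle $\xi_X$ with $\bm{G}_X$ gives a map to $\bon{n}$ covered by a bundle map. Composing with $\bm{J}_{\On{n}}$ produces a classifying map of $\micro{X}$, hence a lift up to homotopy. Conversely, from a lift $b:X\to\bon{n}$ with $j_{\On{n}}b\simeq a$ we set $\xi_X=b^*\gamon{n}$. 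The covering homotopy for microbundles then yields a topological isomorphism $\xi_X\cong a^*\gamtopn{n}\cong\micro{X}$.

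Unstable versus stable lifts need some care. Since $\TOPn{n}/\On{n}\to\TOP/\oo$ is highly connected in this range, and since Kirby--Siebenmann stability (\cite[Essay V]{ks}) holds for $n\geq 5$, a stable lift destabilizes to an $n$-dimensional one. In low dimensions I would instead treat $\micro{X}\oplus\varepsilon^k$ for large $k$ and carry the stabilization into the definition. This destabilization is the step I expect to be the main obstacle, particularly in dimension $4$, where $\TOPn{4}/\On{4}$ is not understood. To get around it, I would try passing to $X\times\mathbb{R}^k$, which has dimension $\geq 5$. There I would use that $\micro{X\times\mathbb{R}^k}$ restricted to $X$ is $\micro{X}\oplus\varepsilon^k$, together with the fact that $\bm{G}_X$ need only be a microbundle isomorphism. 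Any resulting rank-$n$ vector bundle reduction would then come from the product structure theorem.

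Once we are in the stable setting, the obstruction to lifting $X\to\BTOP$ to $\bo$ lies in the cohomology of $X$ with coefficients in $\pi_*(\TOP/\oo)$. By Theorem \ref{thm: ks V 5.5}, $\pi_i(\TOP/\oo)\cong\pi_i(\kk(\mathbb{Z}/2,3))$ for $i\leq 6$. So $\TOP/\oo$ agrees with $\kk(\mathbb{Z}/2,3)$ through dimension $6$, and its next nonzero group is $\Theta_7=\mathbb{Z}/28$. Since $X$ has dimension at most $7$, there is one primary obstruction, $\ks(X)\in H^4(X;\mathbb{Z}/2)$. The higher obstructions would lie in $H^{i+1}(X;\pi_i(\TOP/\oo))$, and these groups vanish for $i\leq 6$, $i\neq 3$. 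The group $\Theta_7$ could only contribute in $H^8(X)=0$. This is exactly Remark \ref{rem: KS only obst} applied with $C=\varnothing$, so a lift exists if and only if $\ks(X)=0$.

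The remaining step is the boundary datum $\bm{H}_{\partial X}$. If $\partial X\neq\varnothing$, I would choose a collar and work with $\widehat{X}$. The restriction of the lift to $\partial X$ is a lift for $\micro{X}|_{\partial X}\cong\micro{\partial X}\oplus\varepsilon$, where the splitting comes from the collar. From this I define $\xi_{\partial X}$ as the corresponding reduction of $\micro{\partial X}$; the Kirby--Siebenmann class of $\partial X$ is the restriction of $\ks(X)$, so this also vanishes. The $\pm$ compatibility condition can then be arranged by choosing $\bm{G}_X$ near $\partial X$ to respect the collar direction. For the converse, an FS structure gives a lift as above, so $\ks(X)=0$.
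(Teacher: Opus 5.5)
Your overall route is the paper's: get a stable lift $\widetilde a:X\to\bo$ from Remark \ref{rem: KS only obst}, destabilize it to a lift of $a_n:X\to\bTOPn{n}$, and pull back $\gamon{n}$. The gap is the destabilization in dimension $4$. This is exactly the case the paper needs (Corollary \ref{cor: meight and msixt are fs}), and your proposed workaround fails.

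The product structure theorem is a high-dimensional result and is false for $4$-manifolds, and $\meight$ itself is a counterexample. Since $\ks(\meight)=0$, the $5$-manifold $\meight\times\mathbb{R}$ is smoothable by Theorem \ref{thm:KS10.1 sm version}, yet $\meight$ has no smooth structure (Remark \ref{rem: donaldson}). More fundamentally, a smoothing of $X\times\mathbb{R}^k$ only gives a vector bundle reduction of the rank-$(4+k)$ microbundle $\micro{X}\oplus\varepsilon^k$. Passing from that to a rank-$4$ reduction of $\micro{X}$ is the very unstable problem you were trying to avoid. You also cannot ``carry the stabilization into the definition'': Definition \ref{def: fs man} demands a smooth $n$-microbundle topologically isomorphic to $\micro{X}$ itself, not to a stabilization of it.

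The missing ingredient is Freedman--Quinn's Theorem 8.7A, recorded as Theorem \ref{thm: 4D incl conn}: the map $\TOPn{4}/\On{4}\to\TOP/\oo$ is $5$-connected. Compare $\bon{4}$ with the homotopy pullback of $\bTOPn{4}\to\BTOP\leftarrow\bo$; over $\bTOPn{4}$ their homotopy fibers are $\TOPn{4}/\On{4}$ and $\TOP/\oo$, so this comparison map is $5$-connected. Hence, for $4$-dimensional $X$, the stable lift $\widetilde a$ comes from a lift $\widetilde{a_4}:X\to\bon{4}$ of $a_4$. Setting $\xi_X:=\widetilde{a_4}^{*}\gamon{4}$, with the induced topological isomorphism to $\micro{X}$, gives the FS structure.

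For $5\le n\le 7$, your appeal to the Kirby--Siebenmann $(n+2)$-connectivity (Theorem \ref{thm: incl conn}) is exactly what the paper uses. For $n\le 3$, simply use that every such manifold is smoothable; your stabilization fallback is not valid there either. Your boundary step also destabilizes the restricted lift from rank $n$ to rank $n-1$ over $\partial X$, so for $n=5$ it runs into the same $4$-dimensional issue and again needs Theorem \ref{thm: 4D incl conn}.
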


The proof follows from the results below of Kirby-Siebenmann and Freedman-Quinn.

\begin{theorem}[{\cite[Essay~V,~(4.6)~and~Theorem~5.3]{ks}}]
\label{thm: incl conn}
    \sloppy For $n\geq5$, the inclusion of homotopy fibers $\TOPn{n}/\On{n}\to \TOP/\oo$ is $(n+2)$-connected.
\end{theorem}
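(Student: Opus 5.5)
The plan is to compare the unstable and stable classifications of smoothings, so that the connectivity of $\TOPn{n}/\On{n}\to\TOP/\oo$ becomes a statement about smoothings of concrete test manifolds. Both $\TOPn{n}/\On{n}$ and $\TOP/\oo$ are homotopy fibers ($\bon{n}\to\bTOPn{n}$, resp.\ $j_{\oo}$). The inclusion factors as the colimit of the stabilization maps
\[
\TOPn{k}/\On{k}\to\TOPn{k+1}/\On{k+1},
\]
induced by $-\oplus\varepsilon$ on microbundles, i.e.\ by $-\times\mathbb{R}$ on the geometric side. It therefore suffices to show that for every $k\geq n\geq 5$ the map $\pi_i\LL\TOPn{k}/\On{k}\RR\to\pi_i\LL\TOPn{k+1}/\On{k+1}\RR$ is bijective for $i\leq n+1$ and surjective for $i=n+2$. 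A telescope argument then passes to the colimit.

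\emph{Step 1: interpret homotopy groups as smoothings.} Using the unstable form of Theorem \ref{thm:KS10.1 sm version}, for $k\geq 6$ (or $k=5$ rel boundary) I identify $\pi_i\LL\TOPn{k}/\On{k}\RR$ with concordance classes of smoothings of $S^i\times\mathbb{R}^k$, or of $D^i\times\mathbb{R}^k$ rel $\partial D^i\times\mathbb{R}^k$, that agree with the standard structure near a base slice. The key point is that the tangent microbundle of $D^i\times\mathbb{R}^k$ is trivialized by the product, so lifts of its classifying map to $\bon{k}$ are exactly maps $(D^i,\partial D^i)\to(\TOPn{k}/\On{k},*)$.

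\emph{Step 2: compare with the stabilized structures.} Under this identification, stabilization sends a smoothing $\Sigma$ of $D^i\times\mathbb{R}^k$ to $\Sigma\times\mathbb{R}$ on $D^i\times\mathbb{R}^{k+1}$. The Kirby--Siebenmann product structure theorem says that for total dimension $\geq 5$, $-\times\mathbb{R}$ induces a bijection on concordance classes of smoothings rel boundary. The catch is that this theorem is phrased for smoothings of a fixed manifold, whereas we need smoothings subject to the tangential constraint built into the unstable classification. I would handle the discrepancy with the relative version of the classification: take an unstable lift on $D^i\times\mathbb{R}^{k+1}$ and destabilize it by compressing the extra $\mathbb{R}$ direction using the product structure theorem. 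This gives surjectivity. For injectivity, apply the same theorem to concordances, replacing $D^i$ by $D^i\times I$; this costs one degree of connectivity.

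\emph{Step 3 (main obstacle): the top degrees $i=n,n+1,n+2$.} Here the product structure argument alone loses control, because the dimension of $D^i$ is comparable to $k$. For these degrees I would use the fibration
\[
\TOPn{k}/\On{k}\to\TOPn{k+1}/\On{k+1}\to \TOPn{k+1}/\TOPn{k}\ \text{(mod } \On{k+1}/\On{k}=S^k).
\]
The required connectivity then reduces to showing that $\TOPn{k+1}/\TOPn{k}\to S^k$ is $(k+2)$-connected for $k\geq5$. I expect this to be the hardest part. It needs the stable-range statement that $\pi_j\LL\TOPn{k+1}/\TOPn{k}\RR\cong\pi_j(S^k)$ for $j\leq k+1$, which in turn comes from the topological isotopy extension and handle-straightening (torus trick) arguments. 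The geometric input, and the reason for the hypothesis $n\geq 5$, is that homeomorphisms of $\mathbb{R}^{k+1}$ near a hyperplane $\mathbb{R}^k$ can be isotoped to preserve it. I would invoke this via the KS identification of $\pi_j\LL\TOPn{k+1},\TOPn{k}\RR$ with isotopy classes of locally flat codimension-one embeddings, together with the codimension-one splitting theorem. The five lemma on the long exact sequences then yields the claimed $(n+2)$-connectivity.
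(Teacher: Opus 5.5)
The paper does not prove this statement; it quotes it from Kirby--Siebenmann. Judged on its own, your proposal has two genuine gaps.

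First, Step 1 misidentifies the groups. Reducing to the stabilization maps $\TOPn{k}/\On{k}\to\TOPn{k+1}/\On{k+1}$ and passing to the colimit is fine; the problem comes after that. The tangent microbundle of $D^i\times\mathbb{R}^k$ has rank $i+k$, so its classifying map goes to $\bTOPn{i+k}$, not $\bTOPn{k}$. The classification theorem therefore identifies concordance classes of smoothings of $D^i\times\mathbb{R}^k$ rel $\partial D^i\times\mathbb{R}^k$ with $\pi_i(\TOPn{i+k}/\On{i+k})$. What $\pi_i(\TOPn{k}/\On{k})$ classifies is \emph{fibrewise} (sliced) smoothings of the product $\mathbb{R}^k$-bundle over $D^i$. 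Relating those to smoothings of the total space is precisely the stabilization $\TOPn{k}/\On{k}\to\TOPn{i+k}/\On{i+k}$ that you are trying to control, so Step 1 assumes the conclusion. A symptom is that nothing in Steps 1--2 relates $i$ to $k$. The product structure theorem would then make every stabilization map bijective in every degree, forcing $\TOPn{k}/\On{k}\simeq\TOP/\oo$ for all $k\geq 5$. That is false: Farrell--Hsiang's nonzero groups $\pi_j\mathrm{Diff}_\partial(D^k)\otimes\mathbb{Q}$ for large odd $k$ give, via Morlet's equivalence below, non-torsion homotopy in $\TOPn{k}/\On{k}$, whereas $\TOP/\oo$ has finite homotopy groups. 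The correct test manifolds are $k$-dimensional, e.g.\ $D^i\times\mathbb{R}^{k-i}$, and these exist only for $i\leq k$. With that indexing your product-structure argument does give bijectivity in degrees $i\leq k$, and that is exactly where it stops.

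Second, what remains is bijectivity in degree $k+1$ and surjectivity in degree $k+2$, and Step 3 does not reach them. Passing to $\TOPn{k+1}/\TOPn{k}$ is a restatement rather than a reduction. The square formed by $\On{k}\to\On{k+1}$ and $\TOPn{k}\to\TOPn{k+1}$ has a single total homotopy fibre, so $(k+2)$-connectivity of your map is equivalent to that of $S^k=\On{k+1}/\On{k}\to\TOPn{k+1}/\TOPn{k}$. Evaluation at a point off the hyperplane retracts $\TOPn{k+1}/\TOPn{k}$ onto $S^k$, so this means $\pi_j(\TOPn{k+1}/\TOPn{k})\cong\pi_j(S^k)$ for all $j\leq k+2$. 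That is one degree more than you state, and the omitted degree is where the content lies.

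That content is smooth. By Morlet's theorem (Alexander trick plus smoothing theory), $\Omega^{k+1}(\TOPn{k}/\On{k})\simeq\mathrm{Diff}_\partial(D^k)$ for $k\geq 5$. So bijectivity in degree $k+1$ says $\pi_0\mathrm{Diff}_\partial(D^k)\to\Theta_{k+1}$ is an isomorphism: it is onto by the h-cobordism theorem and injective by Cerf's theorem that pseudo-isotopy implies isotopy for $D^k$. Surjectivity in degree $k+2$ says $\pi_1\mathrm{Diff}_\partial(D^k)\to\pi_0\mathrm{Diff}_\partial(D^{k+1})$ is onto, which follows from connectedness of the pseudo-isotopy space of $D^k$ (Cerf again). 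The statement therefore essentially contains Cerf's theorem for discs. The purely topological tools you list (isotopy extension, the torus trick, codimension-one splitting) give no access to it, and this classical smooth input is what the cited Kirby--Siebenmann result ultimately rests on.
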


\begin{theorem}[{\cite[Theorem~8.7A]{freedmanquinn}}]
\label{thm: 4D incl conn}
    The inclusion $\TOPn{4}/\On{4}\to \TOP/\oo$ is $5$-connected.
\end{theorem}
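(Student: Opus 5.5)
The plan is to factor the inclusion through dimension $5$, where Theorem \ref{thm: incl conn} applies, and then to compare dimensions $4$ and $5$ directly. The stabilization maps give a composite
\[\TOPn{4}/\On{4}\longrightarrow \TOPn{5}/\On{5}\longrightarrow \TOP/\oo .\]
By Theorem \ref{thm: incl conn} with $n=5$, the second map is $7$-connected. It therefore suffices to show that the first map is $5$-connected.

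By Theorem \ref{thm: ks V 5.5}, $\pi_i(\TOP/\oo)$ is $\mathbb{Z}/2$ for $i=3$ and vanishes for all other $i\leq 6$. So the target of the first map has the homotopy groups of $\kk(\mathbb{Z}/2,3)$ through degree $6$. Surjectivity on $\pi_5$ is then automatic, since $\pi_5(\TOP/\oo)=0$. What remains is to show that
\[\pi_i\LL\TOPn{4}/\On{4}\RR\to\pi_i\LL\TOPn{5}/\On{5}\RR\]
is an isomorphism for $i\leq 4$.

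To analyze the first map I would compare the two fibrations coming from the sphere-type quotients. There is a map of fibration sequences
\[\On{5}/\On{4}\to \TOPn{5}/\TOPn{4}, \qquad \On{4}\to\On{5}\to S^4=\On{5}/\On{4},\]
and a diagram chase (five lemma on the long exact sequences of the relevant homotopy fibers) shows the following. The map $\TOPn{4}/\On{4}\to\TOPn{5}/\On{5}$ is $5$-connected provided $S^4=\On{5}/\On{4}\to\TOPn{5}/\TOPn{4}$ is $5$-connected. Concretely, this says that $\pi_i(\TOPn{5}/\TOPn{4})\cong\pi_i(S^4)$ for $i\leq 4$, with a surjection in degree $5$.

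Geometrically, $\TOPn{5}/\TOPn{4}$ is the space of germs of locally flat codimension-one $\mathbb{R}^4$'s through the origin in $\mathbb{R}^5$. A map $S^i\to\TOPn{5}/\TOPn{4}$ is a family of such germs, and comparing with $S^4$ amounts to showing that every such family is isotopic, compatibly in the parameter, to a family of linear hyperplanes. In other words, one needs a parametrized collaring/normal-bundle theorem for locally flat $4$-manifolds in $5$-manifolds, for families of dimension at most $5$.

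I would establish this by the standard route:
\begin{enumerate}
\item Translate the parametrized statement into a statement about $4$-manifolds times $D^i$. Here the total spaces have dimension $\geq 5$, so high-dimensional techniques apply.
\item Use the $5$-dimensional topological h-cobordism and controlled h-cobordism theorems of Freedman--Quinn, which rest on the $4$-dimensional disk embedding theorem, to straighten the family.
\item Use the existence and uniqueness of normal bundles for codimension-one locally flat embeddings of $4$-manifolds, again from Freedman--Quinn.
\end{enumerate}

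The main obstacle is exactly this last comparison in low parameter degrees. The cases $i\leq 4$ involve straightening families of $4$-dimensional objects, and that is precisely where topological $4$-manifold surgery (the controlled $5$-dimensional h-cobordism theorem) must be invoked in place of Kirby--Siebenmann's high-dimensional handle straightening. I would take these Freedman--Quinn inputs as given and check that the relative (parametrized, rel boundary) versions hold for parameter spaces $D^i$ with $i\leq 5$. Granting this, the five lemma gives $5$-connectivity of $\TOPn{4}/\On{4}\to\TOPn{5}/\On{5}$, and composing with the $7$-connected map from Theorem \ref{thm: incl conn} proves the statement.
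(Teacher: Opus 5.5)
The paper gives no proof of this statement; it imports it from Freedman--Quinn as a black box. So the question is whether your outline is a proof on its own, and it is not.

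\textbf{What is correct.} Your reductions are right. By the $n=5$ case of Theorem~\ref{thm: incl conn}, the map $\TOPn{5}/\On{5}\to\TOP/\oo$ is $7$-connected. Hence the statement is equivalent to $5$-connectivity of $\TOPn{4}/\On{4}\to\TOPn{5}/\On{5}$. Comparing total homotopy fibers of the square formed by $\bon{4},\bon{5},\bTOPn{4},\bTOPn{5}$ shows this is in turn equivalent to $5$-connectivity of $S^4=\On{5}/\On{4}\to\TOPn{5}/\TOPn{4}$, with the latter taken as a homotopy fiber.

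\textbf{Where the gap is.} Every one of these steps is an \emph{equivalence}, so you have only reformulated the theorem. The claim $\pi_i(\TOPn{5}/\TOPn{4})\cong\pi_i(S^4)$ for $i\le 4$ carries its entire content, and that is exactly the part you propose to ``take as given''.

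Your three-step route for that claim does not supply the missing argument.
\begin{itemize}
\item What you need is a parametrized, fiber-preserving statement. A $D^j$-family ($j\le 5$) of germs of locally flat hyperplanes in $\mathbb{R}^5$, linear over $\partial D^j$, must be deformed rel $\partial D^j$ to a linear family by homeomorphisms commuting with the projection to $D^j$.
\item The Freedman--Quinn $5$-dimensional (controlled) h-cobordism theorems concern a single cobordism. Suppose you grant that the total submanifold $\bigcup_t P_t\times\{t\}\subset\mathbb{R}^5\times D^j$ is standard as a codimension-one submanifold of a $(5+j)$-manifold. That is an unparametrized statement, and it gives no straightening over $D^j$.
\item Your step~1 (``translate into a statement about $4$-manifolds times $D^i$'') is precisely where this sliced-versus-unsliced problem would have to be solved. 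Nothing in the proposal says how.
\item Step~3 contributes no $4$-dimensional input either. Existence and uniqueness of codimension-one normal structures (bicollars) hold in every dimension, by Brown's theorem and the uniqueness of collars.
\end{itemize}

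The real substance of the cited theorem is converting Freedman's and Quinn's $4$- and $5$-dimensional results into this parametrized straightening, or equivalently into the computation of $\pi_i(\TOPn{4}/\On{4})$ for $i\le 4$. As written, your proposal assumes what it set out to prove.
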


 \begin{proof}[Proof of Proposition \ref{prop: triv ks is bts}]
        The forward direction follows immediately from Remark \ref{rem: KS only obst}. The backward direction in dimension $\leq3$ is trivial since all manifolds are smoothable. Let $X$ be a topological $n$-manifold with $4\leq n\leq 7$ and $\ks(X)=0$. Let $a_n:X\to\bTOPn{n}$ be a classifying map of $\micro{X}$ with stabilization $a:X\to\BTOP$. By Remark \ref{rem: KS only obst}, we have a lift $\widetilde{a}:X \to \bo$ of $a$. By Theorems \ref{thm: incl conn} and \ref{thm: 4D incl conn}, we may pull it back to a lift $\widetilde{a_n}:X\to\bon{n}$ of $a_n$, as is illustrated in the diagram below. The squares in the diagram commute and the triangles commute up to homotopy. 

\begin{center}
 \begin{tikzcd}
&&& \gamon{n} \arrow{rr} \arrow{dd} \arrow{dr} && \gamo \arrow{dr} \arrow{dd}\\
\micro{X} \arrow{dd} \arrow{rrrr} \arrow{rrru} &&&& \gamtopn{n} \arrow{dd} \arrow{rr} &&\gamtop \arrow{dd} \\
&&& \bon{n} \arrow{rr} \arrow{dr} && \bo \arrow{dr} \\
X \arrow[swap]{rrrr}{a_n} \arrow{rrru}{\widetilde{a_n}} &&&& \bTOPn{n} \arrow{rr} && \BTOP
\end{tikzcd}
 \end{center}  

\noindent Pulling $\gamon{n}$ back along $\widetilde{a_n}$ produces a smooth microbundle over $X$ that is topologically isomorphic to $\micro{X}$.
    \end{proof}

\sloppy For a connected $4$-manifold $M$, $H^4\LL M, \partial M;\mathbb{Z}/2\RR \cong \mathbb{Z}/2$, so we can view $\ks\LL M,\partial M\RR$ as an element of $\{0,1\}$, called the Kirby-Siebenmann invariant. The Kirby-Siebenmann invariant is additive under connected sum \cite[Theorem 9.2]{survey}. Thus, $\ks\LL \meight \RR = 2\ks \LL M^4_{E_8}\RR = 0$.\footnote{We can also see this as follows. Freedman's theorem tells us that, if the intersection form is even, the Kirby-Siebenmann invariant is equal to the signature divided by $8$. Since the signatures of $E_8\oplus E_8$ and $E_{16}$ are $16$, $\ks(\meight)=\ks(\msixt)=0$. This way, we also see that a single copy of $M^4_{E_8}$ has \emph{non}vanishing Kirby-Siebenmann invariant.} Since $\meight$ and $\msixt$ are stably homeomorphic, they differ by a connected summand that is smoothable and therefore has trivial Kirby-Siebenmann invariant. Thus, $\ks\LL\msixt\RR=\ks\LL\meight\RR=0$.

\begin{corollary}
\label{cor: meight and msixt are fs}
The topological manifolds $\meight$ and $\msixt$ have FS structures which are unique up to FS-isomorphism. 

\begin{proof}
     Existence follows immediately from Proposition \ref{prop: triv ks is bts}, and uniqueness follows from obstruction theorey since $H^3\LL\meight;\mathbb{Z}/2\RR=H^3\LL\msixt;\mathbb{Z}/2\RR=0$. There is therefore no ambiguity when considering $\meight$ and $\msixt$ as FS manifolds.
\end{proof}
\end{corollary}

\begin{remark}
    \label{rem: donaldson}
    Donaldson's Theorem \cite[Theorem~1]{donaldson1,donaldson2} states that, if the intersection form of a closed, oriented, connected, smooth $4$-manifold is positive-definite, it must be equivalent over the integers to the identity. The forms $E_8\oplus E_8$ and $E_{16}$ are not equivalent to the identity, so $\meight$ and $\msixt$ cannot be smoothed, despite having no bundle-theoretic smoothing obstructions.
\end{remark}

\begin{remark}
    \label{rem: unique iso class of fs on contractible}
    A contractible manifold has a unique isomorphism class of FS structures since all obstructions vanish and the relevant lifts are all homotopic. This highlights the extent to which the FS and smooth manifold categories are drastically different in $4$ dimensions. There are $4$-manifolds like $\meight$ and $\msixt$ with FS structures and no smooth structures, but also those with more isomorphism classes of smooth structures than of FS structures. For example, $\mathbb{R}^4$ has uncountably many non-diffeomorphic smooth structures, but only one isomorphism class of FS structures.
\end{remark}

    \subsection{Formally Smooth Bordisms and Gluing}
\label{ss: FS bord}
    
\begin{definition}
     Let $(X_1,\xi_{X_1},\bm{G}_{X_1})$ and $(X_2,\xi_{X_2},\bm{G}_{X_2})$ be closed FS manifolds. An \textbf{FS bordism} from $(X_1,\xi_{X_1},\bm{G}_{X_1})$ to $(X_2,\xi_{X_2},\bm{G}_{X_2})$ is an FS manifold $\LL M, \xi_M, \bm{G}_M\RR$ with boundary $\LL \partial M,\xi_{\partial M},\bm{G}_{\partial M}\RR$ together with the following data:

     \begin{itemize}
         \item A decomposition of the boundary: \[\LL \partial M,\xi_{\partial M},\bm{G}_{\partial M}\RR = \LL (\partial M)_1,\xi_{(\partial M)_1},\bm{G}_{(\partial M)_1}\RR \sqcup \LL (\partial M)_2,\xi_{(\partial M)_2},\bm{G}_{(\partial M)_2}\RR,\] 

         \item FS isomorphisms $\bm{F_i:}(X_i,\xi_{X_i},\bm{G}_{X_i})\to \LL (\partial M)_i,\xi_{(\partial M)_i},\bm{G}_{(\partial M)_i}\RR$, expressed as the following commutative diagram:
         \begin{center}
             \begin{tikzcd}
                  \micro{X_i} \arrow{rrrrr}{\bm{D}f_i} \arrow[swap]{dddr} &&&&&\micro{(\partial M)_i} \arrow{dddl}\\
 & \xi_{X_i} \arrow[swap]{ul}{\bm{G}_{X_i}} \arrow{dd} \arrow{rrr}{\bm{F}_i}&&& \xi_{(\partial M)_i} \arrow{ur}{\bm{G}_{(\partial M)_i}} \arrow{dd} \\
 \\
 &X_i \arrow{rrr}{f_i} &&& (\partial M)_i&,
             \end{tikzcd}
         \end{center}
         \item Smooth microbundle isomorphisms $\bm{L}_i:\xi_{X_i}\oplus\varepsilon\to\xi_{(\partial M)_i}\oplus\varepsilon$ over $f_i$ subject to the following conditions:
         \begin{enumerate}
             \item $\bm{L}_i$ restricts to $\bm{F}_i$ on $\xi_{X_i}$.
             \item $\bm{L}_1$ sends $\varepsilon^+$ to $\varepsilon^+$ and $\bm{L}_2$ sends $\varepsilon^+$ to $\varepsilon^-$. (This is the analogue of choosing an inward/outward pointing normal vector for each boundary component in the smooth case.)
         \end{enumerate} 
     \end{itemize}
\noindent We will often not work directly with the actual boundary and will notationally conflate the source and target manifolds with the corresponding parts of the boundary. We will therefore use the following conventions: Let $\iota:\partial M\to M$ and $\bm{H}_{\partial M}:\xi_{\partial M}\oplus \varepsilon\to \xi_M|_{\partial M}$ be the inclusion and smooth microbundle isomorphism as in Definition \ref{def: fs man}. Let "$\bm{H}_{(\partial M)_i}$" denote the restriction of $\bm{H}_{\partial M}$ to $\xi_{(\partial M)_i}\oplus \varepsilon$. We use the following shorthand notation:
\begin{itemize}
\doublespacing
    \item "$\iota_i:X_i\to M$" refers to the composition $X_i\xrightarrow{f_i}(\partial M)_i\xrightarrow{\iota}M$,
    \item "$\xi_M\big|_{X_i}$" or "$\iota_i^*\xi_M$" refers to $f_i^*\xi_M\big|_{(\partial M)_i}$,
    \item "$\bm{H}_{X_i}: \xi_{X_i}\oplus \varepsilon\to \iota_i^*\xi_M$" refers to $\xi_{X_i}\oplus\varepsilon\xrightarrow{\bm{L}_i}\xi_{(\partial M)_i}\oplus\varepsilon \xrightarrow{\bm{H}_{(\partial M)_i}}\xi_M|_{(\partial M)_i}$.
\end{itemize}

\end{definition} 

\begin{remark}
    Let $\LL X,\xi_X,\bm{G}_X\RR$ be a closed FS manifold. By Proposition \ref{prop: product of FS}, there is a canonical FS structure on $X\times I$, which ensures the existence of identity bordisms.
\end{remark}

           Let $\LL \varnothing\to M_1\xleftarrow{\iota_1}X , \xi_{M_1},\bm{G}_{M_1},\LL \bm{H}_{X}\RR _1\RR$ and $\LL X \xrightarrow{\iota_2} M_2 \leftarrow \varnothing, \xi_{M_2},\bm{G}_{M_2},\LL \bm{H}_{X}\RR _2 \RR$ be FS bordisms from $\varnothing$ to $\LL X, \xi_X,\bm{G}_X\RR$ and $\LL X, \xi_X,\bm{G}_X\RR$ to $\varnothing$, respectively. Note that the index set $\{1,2\}$ is now being used to label the two bordisms being glued rather than differentiating the in- versus out-boundary of a single bordism. The microbundle data can be glued so that the result is an FS bordism with underlying topological bordism $M_1\cup_X {M_2}$.
       We glue the smooth microbundles $\xi_{M_1}$ and $\xi_{{M_2}}$ as follows: Let $\LL W,W'_1,\LL H_X\RR_1\RR$ and $\LL W,W'_2,\LL H_X\RR_2\RR$ be representatives of $\LL \bm{H}_{X}\RR _1$ and $\LL \bm{H}_{X}\RR _2$ such that $\LL H_X\RR_1$ and $\LL H_X\RR_2$ are homeomorphisms. This provides an identification $\xi_{M_1}\big|_X\to\xi_{M_2}\big|_X$, which allows us to glue $\xi_{M_1}$ and $\xi_{M_2}$. We will refer to the resulting topological space as $\xi_{M_1}\Cup_X\xi_{M_2}$.

\begin{lemma}
    \label{vb gluing}
    The space $\xi_{M_1}\Cup_X\xi_{M_2}$ is a smooth microbundle over $M_1\cup_XM_2$.

    \begin{proof}
        For all points of $M_1\cup_X M_2$ away from the gluing locus, we may import the local trivializations from $\xi_{M_1}$ or $\xi_{M_2}$. 
        For a point $x\in X$, let $V_1\subset \xi_{M_1}$ and $V_2\subset \xi_{M_2}$ be trivial open neighborhoods of $\iota_1(x)$ and $\iota_2(x)$, respectively, such that $\LL V_1\cap \xi_{M_1}|_{X}\RR\subset W_1'$ and $\LL V_2\cap \xi_{M_2}|_{X}\RR\subset W_2'$. By "$\subset \xi_{M_i}$" and "$\cap \hspace{1mm}\xi_{M_i}|_{X}$", we mean subset of and intersection with the total space of the appropriate microbundle. Under the identification $\xi_{M_1}\big|_{X}\to\xi_{M_2}\big|_{X}$, $V_1$ and $V_2$ may not form a trivial open neighborhood of $x$, but we may choose one $V\subset V_1\Cup_X V_2$. Then, the local trivialization data from $\xi_{M_1}$ and $\xi_{M_2}$ glue into a local trivialization of $\xi_{M_1}\Cup_X\xi_{M_2}$ at $x$. Therefore, $\xi_{M_1}\Cup_X\xi_{M_2}$ is a topological microbundle. The fiberwise smoothness condition for smooth microbundles laid out in definition \ref{def: smooth micro} is met automatically since the gluing is done via a smooth microbundle isomorphism.
            \end{proof}

\end{lemma}

We can glue the tangent microbundles $\micro{M_1}$ and $\micro{M_2}$ in a way that allows for gluing the isomorphisms $\bm{G}_{M_1}:\xi_{M_1}\rightarrow\micro{M_1}$ and $\bm{G}_{M_2}:\xi_{M_2}\rightarrow\micro{M_2}$. Let $\LL Y_1,Y_1',G_{M_1}\RR$ and $\LL Y_2,Y_2',G_{M_2}\RR$ be representatives of $\bm{G}_{M_1}$ and $\bm{G}_{M_2}$ such that $G_{M_1}$ and $G_{M_2}$ are homeomorphisms and such that $\LL Y_1\cap \xi_M|_{X_1}\RR\subset W_1'$ and $\LL Y_2\cap \xi_M|_{X_1}\RR\subset W_2'$. Let $Y\subset \xi_{M_1}\Cup_X\xi_{M_2}$ be an open neighborhood of the zero section such that $\LL Y\cap \xi_{M_1} \RR\subset Y_1$ and $\LL Y\cap \xi_{M_2} \RR\subset Y_2$. Let $U=Y\cap\left(\xi_{M_1}\Cup_X\xi_{M_2}\right)\big|_X$. This gives an identification $h:G_{M_1}|_X\left(U\right)\to G_{M_2}|_X\left(U\right)$ with which we can glue $\micro{M_1}$ and $\micro{M_2}$ as illustrated in Figure 5.1. We again denote the glued space by $\micro{M_1}\Cup_X\micro{M_2}$.

\begin{figure}[H]
    \centering
    
    \label{fig:gluing diagram}
   
    \begin{tikzpicture}[scale=0.8]
  \draw[thick] (-7.5,0) -- (-2,0);
  \draw[thick][dashed] (-7.5,2) -- (-2,2);
  \draw[thick] (-2,-5.5) -- (-2,2);
  \draw[thick] (-7.5,-5) -- (-2,0);
  \draw[name path=lower left][thick][dotted] (-7.5,-2) to[in=130, out = 30] (-6,0.25) to[in=190,out =-50] (-5,-0.5) to[out=10, in=180] (-3,1.75) to[out=0, in = 200] (-2,1.5);
  \draw[name path=upper left][thick][dotted] (-7.5,-5.5) to[out=0, in=150] (-6.5,-4.5) to[out=-30, in=200] (-4,-3) to[out=20, in=200] (-2,-4);
  \draw[thick] (2,0) -- (7.5,0);
  \draw[thick][dashed] (2,-2) -- (8,-2);
  \draw[thick] (2,-2) -- (2,5.5);
  \draw[thick] (2,0) -- (7.5,5);
  \draw[name path=lower right][thick][dotted] (2,-1.75) to[out=-30, in=180] (4,-0.25) to[out=0, in=140] (4.5,-0.5) to[out=-40, in=180] (6,1) to[out=0, in=170] (7.5,2);
  \draw[thick][dotted][name path=upper right] (2,3) to[out=-40, in=220] (4,4) to[out=40, in=130] (5.5,4.5) to[out=-50, in=180] (7.5,5.5);

  \draw[thick][loosely dashed] (-2,1.5) -- (2,3);
  \draw[thick][loosely dashed] (-2,0) -- (2,0);
  \draw[thick][loosely dashed] (-2,-4) -- (2,-1.75);
  \tikzfillbetween[of=upper left and lower left]{gray, opacity=0.15};
  \tikzfillbetween[of=upper right and lower right]{gray, opacity=0.15};

  \node at (0,1.125) {$\mathrm{glue}$};
    \node at (0,-1.4375) {$\mathrm{glue}$};
 \node at (-2,-6) {$\widehat{M_1}$};
  \node at (-8.5,0) {$M_1$};
   \node at (2,6) {$\widehat{M_2}$};
  \node at (8.5,0) {$M_2$};
     \draw[very thick,decorate,decoration=brace] (-1.95,1.5) -- (-1.95,0);
       \node at (-1.5,0.75) {$A_1$};

     \draw[very thick,decorate,decoration=brace] (1.95,-1.75) -- (1.95,0);
       \node at (1.5,-0.875) {$A_2$};

      \draw[very thick, decorate, decoration = brace] (-2,-7) -- (-8,-7);
      \node at (-5,-7.75) {$\micro{M_1}$};
       \draw[very thick, decorate, decoration = brace] (8,-7) -- (2,-7);
      \node at (5,-7.75) {$\micro{M_2}$};
\end{tikzpicture}
\caption{Schematic of gluing tangent microbundles}
\end{figure}

\begin{lemma}
\label{lem:gluing micros}
 The space $\micro{M_1}\Cup_X\micro{M_2}$ is a topological microbundle over $M_1\cup_XM_2$ and is isomorphic to the tangent microbundle $\micro{M_1\cup_XM_2}$.
\begin{proof}
It is clear that we have a section and projection map given by gluing the diagonal maps and projections onto the first factor. It now suffices to show that an open neighborhood of the section is homeomorphic to an open neighborhood of the diagonal in $\micro{M_1\cup_XM_2}$ in a way that preserves fibers. Let $A_1=G_{M_1}|_X\left(U\right)\cap\left(\micro{M_1}|{_X}\right)^+$ and $A_2=G_{M_2}|_X\left(U\right)\cap\left(\micro{M_2}|{_X}\right)^+$. The homeomorphism $h$ gives fiber-preserving identifications of $A_1$ with an open neighborhood of the diagonal in $X\times M_2$ and of $A_2$ with an open neighborhood of the diagonal in $X\times M_1$. Let $N_1\xrightarrow{\cong}X\times I$ and $N_2\xrightarrow{\cong} X\times I$ be collar neighborhoods of $X$ in $M_1$ and $M_2$. We may then extend $h$ along these neighborhoods to get fiber-preserving identifications of $A_1\times I\subset \left(\micro{M_1}|{_X}\right)^+$ with an open neighborhood of the diagonal in $N_1\times M_2$ and of $A_2\times I\subset\left(\micro{M_2}|{_X}\right)^+$ with an open neighborhood of the diagonal in $N_2\times M_1$. Let $E$ be an open neighborhood of the section in $\micro{M_1}\Cup_X\micro{M_2}$ such that $\LL E\cap \left(\micro{M_1}|{_X}\right)^+ \RR \subset A_1\times I$ and $\LL E\cap \left(\micro{M_2}|{_X}\right)^+\RR \subset A_2\times I$. Then, the extensions of $h$ glue, along with $\mathrm{id}_{M_1\times M_1}$ and $\mathrm{id}_{M_2\times M_2}$ into a fiber-preserving homeomorphism $J$ from $E$ to an open neighborhood of the diagonal in $\micro{M_1\cup_XM_2}$. We can then pull back local trivializations from $\micro{M_1\cup_XM_2}$ to $\micro{M_1}\Cup_X\micro{M_2}$. Thus, $\micro{M_1}\Cup_X\micro{M_2}$ is indeed a topological microbundle over $M_1\cup_XM_2$ and $J$ determines a topological microbundle isomorphism $\bm{J}$ over the identity.
\end{proof}

\end{lemma}
This proves the following proposition.
\begin{proposition}
    \label{prop: gluing nullbord}
    The following data constitute an FS bordism:
    \begin{enumerate}
        \item The topological bordism $\left(\varnothing\rightarrow M_1\cup_X M_2\leftarrow \varnothing \right)$,
        \item The smooth microbundle $\xi_{M_1}\Cup_X\xi_{M_2}$,
        \item The topological microbundle isomorphism $\bm{J}\circ\left(\bm{G}_{M_1}\Cup_X\bm{G}_{M_2}\right):\xi_{M_1}\Cup_X\xi_{M_2}\to \micro{M_1\cup_XM_2}$. \qed
    \end{enumerate}
\end{proposition}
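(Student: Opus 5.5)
To prove Proposition \ref{prop: gluing nullbord}, I would check the three pieces of data in Definition \ref{def: fs man} one at a time, with the boundary data empty throughout. The resulting manifold $M_1\cup_X M_2$ is closed: both outer boundaries are empty, and $X$ is absorbed into the interior. So there is no $\bm{H}$ to construct and no $\pm$-compatibility condition on boundary collars to check. The manifold is just a closed FS manifold viewed as a bordism $\varnothing\rightsquigarrow\varnothing$.

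\emph{Topological bordism.} First, $M_1\cup_X M_2$ is a topological manifold without boundary. It is glued along the boundary components identified with $X$ via $\iota_1,\iota_2$. Using collars $N_1,N_2\cong X\times I$, which exist by Brown's collaring theorem, a neighborhood of $X$ in the union is homeomorphic to $X\times(-1,1)$. This settles item (1).

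\emph{Smooth microbundle.} By Lemma \ref{vb gluing}, $\xi_{M_1}\Cup_X\xi_{M_2}$ is a smooth $n$-microbundle over $M_1\cup_X M_2$. This settles item (2).

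\emph{Topological isomorphism.} I would glue the representatives $G_{M_1}$ and $G_{M_2}$, which were chosen to be homeomorphisms onto their images, along $U$. The gluing map $h$ on the $\tau$-side was defined precisely as $G_{M_2}|_X\circ(\text{identification via }(H_X)_1,(H_X)_2)\circ G_{M_1}|_X^{-1}$ on $G_{M_1}|_X(U)$. Consequently $G_{M_1}$ and $G_{M_2}$ agree under the two identifications and glue to a well-defined, fiber-preserving homeomorphism
\[\xi_{M_1}\Cup_X\xi_{M_2}\supset Y\to \micro{M_1}\Cup_X\micro{M_2}\]
covering the identity. Continuity follows from the pasting lemma on the closed pieces, and the inverse is constructed the same way. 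Composing with the isomorphism $\bm{J}$ of Lemma \ref{lem:gluing micros} gives a topological microbundle isomorphism over the identity, $\xi_{M_1}\Cup_X\xi_{M_2}\to\micro{M_1\cup_X M_2}$. This settles item (3).

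\emph{Main obstacle.} The delicate point is making sure the neighborhoods $Y$, $U$ and $E$ are compatible, so that the domain of the glued map $\bm{G}_{M_1}\Cup_X\bm{G}_{M_2}$ lands inside the domain $E$ of $J$. I would handle this by shrinking $Y$ further, which is legitimate by Remark \ref{rem:restrict micro iso}, so that its image lies in $E$. Since microbundle morphisms are germs, this shrinking does not change the isomorphism class.

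Finally, independence of the choices of representatives follows from the same germ equivalence in Definition \ref{def: micro morphism}. Together these steps establish all the data of an FS bordism.
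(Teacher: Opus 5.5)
Your proposal is correct and follows essentially the same route as the paper, which treats the proposition as an immediate consequence of three ingredients: Lemma \ref{vb gluing} for the glued smooth microbundle, the definition of $h$ through $G_{M_1}|_X$ and $G_{M_2}|_X$ (which is what lets $\bm{G}_{M_1}$ and $\bm{G}_{M_2}$ glue), and Lemma \ref{lem:gluing micros} for $\bm{J}$. Your additional care — collaring to see that $M_1\cup_X M_2$ is a closed manifold, and shrinking $Y$ so that the glued map lands in the domain $E$ of $J$ — only makes explicit what the paper leaves implicit.
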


\begin{remark}
    The gluing argument extends immediately to the case where the in-boundary of $M_1$ and the out-boundary of $M_2$ are nonempty. Although there were choices involved in defining $\xi_{M_1}\Cup_X\xi_{M_2}$, $\micro{M_1}\Cup_X\micro{M_2}$, and $\bm{J}\circ\left(\bm{G}_{M_1}\Cup_X\bm{G}_{M_2}\right)$, any other choices will result in a bordism of the same FS isomorphism class relative to the boundary.
\end{remark}

We thus have the following corollary.

\begin{corollary}
    \label{cor: gluing fs}
    Gluing of FS bordisms is well-defined up to FS isomorphism. \qed
\end{corollary}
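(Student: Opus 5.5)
The statement has two parts. First, gluing two composable FS bordisms along a common FS boundary component produces an FS bordism. Second, the FS isomorphism class of the result (relative to the boundary) does not depend on the auxiliary choices. The first part is Proposition~\ref{prop: gluing nullbord} together with the following remark, which extends it to nonempty in- and out-boundaries. The corollary is therefore really about independence of choices.

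\textbf{Listing the choices.} The gluing depends on:
\begin{itemize}
\item representatives $(W,W_i',(H_X)_i)$ of $(\bm{H}_X)_i$;
\item representatives $(Y_i,Y_i',G_{M_i})$ of $\bm{G}_{M_i}$;
\item the neighborhood $Y$;
\item the collars $N_i$ used in Lemma~\ref{lem:gluing micros} to build $\bm{J}$.
\end{itemize}
The FS bordisms being glued may also be replaced by FS-isomorphic ones rel boundary.

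\textbf{Step 1: representatives.} Any two representatives of a microbundle morphism agree on a smaller common neighborhood of the zero section. So shrinking all neighborhoods to a common one gives literally the same germ. By Remark~\ref{rem:restrict micro iso}, the microbundle $\xi_{M_1}\Cup_X\xi_{M_2}$ is unchanged up to canonical smooth isomorphism over the identity of $M_1\cup_X M_2$, and this isomorphism intertwines the glued maps $\bm{G}$. Hence the choices of $W$, $Y$, $Y_i$ affect nothing beyond restriction to smaller neighborhoods.

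\textbf{Step 2: collars.} Two collars of $X$ in $M_i$ are ambient isotopic rel $X$; this is the topological uniqueness of collars, due to Brown and Connelly. Write $\phi$ for the end of such an isotopy, a homeomorphism of $M_1\cup_X M_2$ fixing the boundary.
\begin{itemize}
\item Pull the FS structure back along $\phi$, as in Proposition~\ref{prop: homeo is fs iso}.
\item Compare the two resulting maps $\bm{J}$ and $\bm{J}'$ through $\bm{D}\phi$. Their germs near the zero section differ by a microbundle automorphism isotopic to the identity.
\item Compose the FS structure with the pullback via $\phi$. This gives an FS isomorphism between the two glued bordisms, taking $\bm{F}$ to be the induced smooth map covering $\phi$.
\end{itemize}

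\textbf{Step 3: replacing the inputs.} If $\bm{\Phi}_i$ are FS isomorphisms rel boundary between the inputs, they agree with the identity near $X$, after using the $\bm{L}_i$ compatibility. So they glue, exactly as in Lemma~\ref{vb gluing}, to a smooth microbundle isomorphism over the glued homeomorphism. Commutativity of the FS diagram is checked separately on each piece.

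The main obstacle is Step 2. The FS morphism condition requires strict commutativity with $\bm{D}f$, not commutativity up to homotopy, so the collar-dependence of $\bm{J}$ has to be absorbed exactly. My approach is to build the comparison isomorphism as the pullback along the isotopy homeomorphism $\phi$, so that the required square commutes by construction rather than only up to homotopy.
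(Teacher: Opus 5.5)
Your outline matches the paper's. The glued bordism comes from Lemmas~\ref{vb gluing} and~\ref{lem:gluing micros} and Proposition~\ref{prop: gluing nullbord}. The paper only \emph{asserts} that the result is independent of choices, in the remark just before the corollary, so everything rests on your attempt to prove that remark. Step~1 is fine. The gap is in Step~2, at exactly the point you flag.

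An FS isomorphism $(\phi,\bm{F})$ from the structure built with collars $N_i'$ to the one built with $N_i$ must satisfy $\bm{J}\circ\bm{G}\circ\bm{F}=\bm{D}\phi\circ\bm{J}'\circ\bm{G}$ on the nose, where $\bm{G}=\bm{G}_{M_1}\Cup_X\bm{G}_{M_2}$. This forces $\bm{F}=\bm{G}^{-1}\bm{J}^{-1}\,\bm{D}\phi\,\bm{J}'\bm{G}$. The whole question is therefore whether this topological microbundle isomorphism over $\phi$ is fiberwise smooth, and you give no argument for that.

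There is no ``induced smooth map covering $\phi$'' from $\xi=\xi_{M_1}\Cup_X\xi_{M_2}$ to itself. The map from Proposition~\ref{prop: homeo is fs iso} goes $\phi^*\xi\to\xi$, so you would still need a smooth isomorphism $\xi\cong\phi^*\xi$ over the identity that makes the square commute exactly. Homotopy invariance gives \emph{some} smooth isomorphism $\xi\cong\phi^*\xi$. Your remark that the residual automorphism is isotopic to the identity is a statement of the same homotopical kind. Neither yields strict commutativity.

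Concretely, take $\bm{J}$ as built in Lemma~\ref{lem:gluing micros}: the identity on the $M_i\times M_i$ parts, and collar extensions of $h$ on the cross parts. Over a collar point $p$, $\bm{J}'\bm{J}^{-1}$ is the identity on the $M_1$ side of the fiber and a different homeomorphism on the $M_2$ side. Moreover, $\bm{D}\phi$ is just the germ of a homeomorphism. Conjugating by the germ of $G_{M_1}$ at $p$ gives a local homeomorphism of $\mathbb{R}^n$ that nothing forces to be smooth. So ``the square commutes by construction'' is precisely the unproved claim. Your own second bullet leaves a residual automorphism that the third bullet never absorbs.

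Step~3 inherits the problem. The structure map on the glued bordism is $\bm{J}\circ(\bm{G}_{M_1}\Cup_X\bm{G}_{M_2})$. Fibers of $\micro{M_1\cup_X M_2}$ over points of $M_1$ near $X$ reach into $M_2$, so commutativity cannot be checked ``separately on each piece.'' You must take the target collars to be the images of the source collars, so that $\bm{J}$ is natural under the glued homeomorphism, and then appeal to Step~2 to change collars.

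To close the gap you need one of two things. Either give a genuine argument that the comparison map $\bm{G}^{-1}\bm{J}^{-1}\bm{D}\phi\,\bm{J}'\bm{G}$ is, or can be arranged to be, fiberwise smooth. Or work with a coarser equivalence relation on FS bordisms that allows isotopies or concordances of the structure map; under that relation your isotopy argument would be exactly what is required.
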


\begin{proposition}
\label{prop: conn sum of fs}
    The connected sum of connected FS manifolds is a well-defined operation up to FS isomorphism.
    \begin{proof}

       Let $M$ and $N$ be connected topological $n$-manifolds with FS structures. For simplicity, assume they have empty boundaries. Let $M'$ and $N'$ be the result of removing the interior of an embedded $n$-disk from $M$ and $N$, respectively. A choice of collar neighborhoods determines FS boundary data for $\partial M'\subset M'$ and $\partial N'\subset N'$. By Remark \ref{rem: unique iso class of fs on contractible}, there is a unique isomorphism class of FS structures on a disk, so we may view $M'$ and $N'$ as FS bordisms to and from, respectively, $S^{n-1}$ with the canonical FS structure. We may then glue as described above.
    \end{proof}
\end{proposition}

\subsection{Formally Smooth TQFTs}
\label{ss:FS TQFTs}

\begin{definition}
\label{def: FS bord cat}
We define the \textbf{FS Bordism Category}, denoted $\fscat{n}$, as follows: 
    
\end{definition}
\noindent 

The objects are closed, $(n-1)$-dimensional FS manifolds, and the morphisms are FS bordisms between them, modulo orientation-preserving FS isomorphisms relative to the boundary. In other words, the FS bordisms $\big( X_1 \xrightarrow{\iota_1} M \xleftarrow{\iota_2} X_2, \xi_M, \bm{G}_M, \bm{H}_{\LL X_1\sqcup X_2\RR} \big)$ and $\big( X_1 \xrightarrow{\iota'_1} M' \xleftarrow{\iota'_2} X_2, \xi_{M'}, \bm{G}_{M'},\bm{H}'_{\LL X_1\sqcup X_2\RR}\big)$ from $(X_1,\xi_{X_1},\bm{G}_{X_1})$ to $(X_2,\xi_{X_2},\bm{G}_{X_2})$ are equivalent if there exists an orientation-preserving homeomorphism $f: M \to M'$ and a smooth microbundle isomorphism $\bm{F}:\xi_{M} \to \xi_{M'}$ such that the following diagram commutes, where $\bm{H}_{X_i}$ and $\bm{H}'_{X_i}$ denote the restrictions of $\bm{H}_{\LL X_1\sqcup X_2\RR}$ and $\bm{H}'_{\LL X_1\sqcup X_2\RR}$ to $X_i$.

 \begin{center}
 \begin{tikzcd}
 \\
 \micro{M} \arrow{rrrrrr}{\bm{D}f} \arrow[swap]{ddddddr}&&&&&&\micro{M'} \arrow{ddddddl}\\
 &\xi_{M{}} \arrow{ddddd} \arrow{ul}[near start]{\bm{G}_{M}} \arrow{rrrr}{\bm{F}}&&&&\xi_{M'} \arrow[swap,near start]{ur}{\bm{G}_{M'}} \arrow[swap]{ddddd}\\
 \\&&\iota_i^*\xi_{M} \arrow{uul}   \arrow[swap]{ddr} & \arrow[swap]{l}{\bm{H}_{X_i}}\xi_{X_i}\oplus \varepsilon \arrow{dd} \arrow{r}{\bm{H}'_{X_i}} &\iota_i'^*\xi_{M'} \arrow{uur}  \arrow{ddl}
 \\
 \\&&&X_i \arrow[swap]{dll}{\iota_i} \arrow{drr}{\iota'_i}
 \\
 &M \arrow{rrrr}{f}&&&& M' &.
 \\
\end{tikzcd}
 \end{center}  

\begin{remark}
   \sloppy If $\LL X_1, \xi_{X_1},\bm{G}_{X_1}\RR$ and $\LL X_2, \xi_{X_2},\bm{G}_{X_2}\RR$ are closed FS $(n-1)$-manifolds which are FS isomorphic, then they are also isomorphic in $\fscat{n}$. An FS isomorphism between $\LL X_1, \xi_{X_1},\bm{G}_{X_1}\RR$ and $\LL X_2, \xi_{X_2},\bm{G}_{X_2}\RR$ allows us to view the identity bordism on $\LL X_1, \xi_{X_1},\bm{G}_{X_1}\RR$ as a bordism from $\LL X_1, \xi_{X_1},\bm{G}_{X_1}\RR$ to $\LL X_2, \xi_{X_2},\bm{G}_{X_2}\RR$.
\end{remark}

\begin{remark}
   By Proposition \ref{prop: conn sum of fs}, we have a well-defined notion of stable isomorphism of FS manifolds, and can thus extend Proposition \ref{prop: cant dist stab iso} to the FS category.  By Proposition \ref{prop: homeo is fs iso} and Corollary \ref{cor: meight and msixt are fs}, $\meight$ and $\msixt$ are stably isomorphic in the FS category, and thus cannot be distinguished by ordinary $4$-dimensional FS TQFTs.
\end{remark}

\begin{theorem}
   \label{thm: main thm FS}

Ordinary $(4+m)$-dimensional FS TQFTs fail to distinguish $\meight\times S^m$ and $\msixt\times S^m$.

    \begin{proof}
        The proof is analogous to that of Theorem \ref{thm: main thm top}. Let $\mathcal{Z}:\fscat{4+m}\to \vectc$ be an ordinary FS TQFT. By Remark \ref{rem: smooth has canon FS} and Proposition \ref{prop: product of FS}, the dimensional reduction functor $\fscat{4}\xrightarrow{-\times S^m}\fscat{4+m}$ is well-defined. By Proposition \ref{prop: cant dist stab iso}, the dimensional reduction of $\mathcal{Z}$ along $S^m$ cannot distinguish $\meight$ and $\msixt$, so $\mathcal{Z}$ cannot distinguish $\ssm{\meight}$ and $\ssm{\msixt}$.
        \end{proof}
\end{theorem}


\section{Smooth Results for Infinite $\bm{\pi_2}$}
\label{sec:smooth}

We will use "$X_\Sigma$" to denote a smooth manifold with underlying topological manifold $X$ and smooth structure $\Sigma$, and we will denote the smooth tangent microbundle of $X_\Sigma$ by $\tau_{X_\Sigma}$. Recall from Remark \ref{rem: smooth has canon FS} that, given a collar neighborhood of $\partial X$, there is a canonical FS structure on $X_\Sigma$ given by $\LL X,\tau_{X_\Sigma},\bm{\mathrm{Id}}_{X\times \widehat{X}}\RR$.

\begin{theorem}
\label{thm: bord equiv}
    For $n\geq6$, the categories $\smcat{n}$ and $\fscat{n}$ are equivalent.
   \end{theorem}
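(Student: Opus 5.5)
We will construct the functor $\mathscr{F}_n:\smcat{n}\to\fscat{n}$ explicitly and show that it is essentially surjective and fully faithful. On objects, $\mathscr{F}_n$ sends a closed smooth $(n-1)$-manifold $X_\Sigma$ to its canonical FS structure $\LL X,\tau_{X_\Sigma},\bm{\mathrm{Id}}\RR$ from Remark \ref{rem: smooth has canon FS}. On morphisms, it sends a smooth bordism, with its collars, to the canonical FS bordism. Here the collars determine $\bm{H}$, and the boundary identifications (diffeomorphisms) induce smooth microbundle isomorphisms via their differentials, which give the maps $\bm{L}_i$. Diffeomorphisms relative to the boundary become FS isomorphisms, so $\mathscr{F}_n$ is well defined on equivalence classes. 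Since smooth gluing along collars agrees with the FS gluing of Proposition \ref{prop: gluing nullbord} (take the glued tangent bundle, with $\bm{J}$ the identity), $\mathscr{F}_n$ respects composition, identities and the symmetric monoidal structure.

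\emph{Essential surjectivity.} Let $\LL X,\xi_X,\bm{G}_X\RR$ be a closed FS $(n-1)$-manifold with $n-1\geq5$. The isomorphism $\bm{G}_X$, together with Proposition \ref{prop: sm mb iso to vb}, gives a lift of the classifying map of $\micro{X}$ through $\bon{n-1}\to\bTOPn{n-1}$. By Theorem \ref{thm:KS10.1 sm version}, with $C=\varnothing$, there is a smooth structure $\Sigma$ on $X$ whose tangent microbundle corresponds to a lift vertically homotopic to this one. A vertical homotopy of lifts yields a smooth microbundle isomorphism $\bm{F}:\tau_{X_\Sigma}\to\xi_X$ over the identity such that $\bm{G}_X\bm{F}$ is isotopic to the identity of $\micro{X}$ through topological isomorphisms. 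We then build a bordism realizing this isomorphism. Take the cylinder $X\times I$ with the FS structure that interpolates between $\bm{G}_X$ and $\bm{G}_X\bm{F}$ along the isotopy. This is an invertible morphism in $\fscat{n}$ from $\mathscr{F}_n(X_\Sigma)$ to $\LL X,\xi_X,\bm{G}_X\RR$.

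\emph{Fullness.} Let $M$ be an FS bordism between $\mathscr{F}_n(X_1)$ and $\mathscr{F}_n(X_2)$. On a collar neighborhood $U$ of $\partial M$ we have a smooth structure $\Sigma$ given by the smooth structures of the $X_i$ and the collars, and on $U$ the FS data agree with the tangent data. Theorem \ref{thm:KS10.1 sm version} applies relative to $C=$ a closed collar, since $\dim M=n\geq6$ (for $n=5$ one would need $\partial M\subset C$, which also holds). The lift given by $\xi_M,\bm{G}_M$ therefore produces a smooth structure on $M$ extending $\Sigma$ whose tangent bundle is vertically homotopic, relative to $C$, to $\xi_M$. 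Exactly as in the object case, this gives an FS isomorphism, relative to the boundary, between $\mathscr{F}_n$ of the smooth bordism and $M$.

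\emph{Faithfulness and the main obstacle.} Suppose two smooth bordisms become FS isomorphic relative to the boundary via $(f,\bm{F})$. We must show they are diffeomorphic relative to the boundary. The homeomorphism $f$ pulls back the smooth structure of $M'$ to a second smooth structure $f^*\Gamma'$ on $M$ agreeing with $\Gamma$ near $\partial M$. The FS isomorphism $\bm{F}$ shows that the corresponding lifts to $\bo$ are vertically homotopic relative to the boundary. By the bijection in Theorem \ref{thm:KS10.1 sm version}, $\Gamma$ and $f^*\Gamma'$ are concordant relative to the boundary. In dimension $\geq5$, the concordance implies isotopy theorem of Kirby–Siebenmann gives a homeomorphism isotopic to the identity that is a diffeomorphism $M_\Gamma\to M_{f^*\Gamma'}$ relative to the boundary. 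Composing with $f$ yields the required diffeomorphism.

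This step is the main obstacle. We must check carefully that an FS isomorphism, whose topological part $\bm{D}f$ commutes with the $\bm{G}$'s only up to the given data, really translates into a \emph{vertical} homotopy of lifts relative to the boundary, with the boundary data $\bm{H},\bm{L}$ matching. The same bookkeeping is also needed in the essential surjectivity and fullness steps.
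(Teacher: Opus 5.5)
Your route is the paper's: the same functor $\mathscr{F}_n$, Theorem~\ref{thm:KS10.1 sm version} with $C=\varnothing$ on the closed $(n-1)$-manifold for essential surjectivity, the relative version over a collar for fullness, and the Kirby--Siebenmann bijection plus concordance-implies-isotopy for faithfulness. The gap is in fullness. Equality of morphisms in $\fscat{n}$ is FS isomorphism rel boundary, which demands $\bm{G}_{M'}\bm{F}=\bm{D}f\,\bm{G}_M$ on the nose. The smoothing you produce only comes with $\bm{F}:\tau_{M_\Gamma}\to\xi_M$ such that $\bm{G}_M\bm{F}$ is \emph{isotopic} rel $C$ to the identity of $\micro{M}$. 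For objects you saw this and absorbed the isotopy into a cylinder. For morphisms there is no slack, so ``exactly as in the object case'' does not give what you claim. The paper makes the same leap: the essential-surjectivity half of its lemma on $\mathscr{H}_{X_\Sigma}$ passes directly from a vertical homotopy to an FS isomorphism over the identity. This, not faithfulness, is the real obstacle. The implication FS isomorphism $\Rightarrow$ vertical homotopy is the easy direction. With the on-the-nose relation it is even trivial: for bordisms in the image of $\mathscr{F}_n$ the condition forces $\bm{F}=\bm{D}f$, and fiberwise smoothness of $\bm{D}f$ makes $f$ a diffeomorphism.

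The missing step is not bookkeeping; with the relation as literally defined it is false. Take a smooth bordism $M_{\Gamma_0}$. Over an interior coordinate ball, replace the fiberwise atlas of $\micro{M}$ by the single chart $(x,y)\mapsto\big(x,h_x(y-x)\big)$, where $h_x(v)=v\,|v|^{-\lambda(x)/2}$ and $\lambda\geq 0$ is continuous, compactly supported in the ball and not identically zero. Keep $\bm{G}=\mathrm{id}$. An FS isomorphism from any $\mathscr{F}_n(W_\Gamma)$ would have to be $\bm{D}f$. Fiberwise smoothness over $w$ makes $f$ a local diffeomorphism at nearby points $w'\neq w$, because $h_{f(w)}$ is a diffeomorphism away from $0$. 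Fiberwise smoothness over $w'$ then makes $h_{f(w')}$ differentiable at $0$, which fails once $\lambda(f(w'))>0$. So you need to coarsen the relation, e.g.\ by also identifying FS bordisms whose $\bm{G}$-data differ by an isotopy of topological microbundle isomorphisms rel a neighborhood of the boundary (concordance of FS structures). With that change, three things go through. Your cylinder becomes invertible, since its composite with its reverse has null-homotopic $\bm{G}$-data. Fullness follows from $\bm{G}_M\bm{F}\simeq\mathrm{Id}$ rel $C$. Your faithfulness argument is then exactly what is needed, since equivalent bordisms still yield vertically homotopic lifts rel boundary.
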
 
The proof will be based on the following lemma. Fix a closed smooth $n$-manifold $X_\Sigma$. Let $\mathcal{M}^{\Sm}_{X_\Sigma}$ denote the category whose objects are compact smooth manifolds with boundary $X_\Sigma$ equipped with a collar neighborhood $N$ of $X_\Sigma$ and whose morphisms are diffeomorphisms relative to $X_\Sigma$. We will denote an object of $\mathcal{M}^{\Sm}_{X_\Sigma}$ by $\LL M_\Gamma, N\RR$. Let $\mathcal{M}^{\FS}_{X_\Sigma}$ be the category whose objects are FS manifolds with boundary $\LL X, \tau_{X_\Sigma},\bm{\mathrm{Id}}_{X\times X}\RR$ and whose morphisms are FS isomorphisms relative to the boundary.

\begin{lemma}
    There is a natural functor $\mathscr{H}_{X_\Sigma}: \mathcal{M}^{\Sm}_{X_\Sigma}\to \mathcal{M}^{\FS}_{X_\Sigma}$ that is essentially bijective for $n\geq 5$.
    \begin{proof}
        We define $\mathscr{H}_{X_\Sigma}$ as follows:

        Objects: $\mathscr{H}_{X_\Sigma}\LL M_\Gamma,N\RR=\LL M,\tau_{M_\Gamma},\bm{\mathrm{Id}}_{M\times \widehat{M}}\RR$,

        Morphisms: $\mathscr{H}_{X_\Sigma}\LL f\RR=\bm{D}f$, illustrated in the diagram below.

        \begin{center}
   \begin{tikzcd}
  \micro{M} 
    \arrow[rrrrr, "{\bm{D}f}"] 
    \arrow[swap, dddr] 
  &&&&& 
  \micro{M'} 
    \arrow[dddl] \\
  & \tau_{M_\Gamma} 
      \arrow[swap, ul, "{\bm{\mathrm{Id}}_{M\times \widehat{M}}}" {inner sep=0.5pt, yshift=-0.3ex}]
      \arrow[dd] 
      \arrow[rrr, "{\bm{D}f}"] 
    &&& 
      \tau_{M'_{\Gamma'}} 
        \arrow[ur, "{\bm{\mathrm{Id}}_{M'\times \widehat{M'}}}" {inner sep=0.5pt, yshift=-0.3ex}] 
        \arrow[dd] \\
  \\
  & M \arrow[rrr, "f"] &&& M'
\end{tikzcd}
\end{center}

\sloppy \emph{Essentially injective:} Let $\LL M_\Gamma,N\RR$ and $\LL M'_{\Gamma'},N'\RR$ be objects such that $\mathscr{H}_{X_\Sigma}\LL M_\Gamma,N\RR\cong\mathscr{H}_{X_\Sigma}\LL M'_{\Gamma'},N'\RR$. We then have a homeomorphism (relative to $X$) $f:M\to M'$ and a smooth microbundle isomorphism $\bm{F}:\tau_{M_\Gamma}\to\tau_{M'_{\Gamma'}}$ that agrees topologically with the micro-differential.
      Let $f^*\Gamma'$ be the smooth structure on $M$ with tangent bundle $f^*\tau_{M'_{\Gamma'}}$ that arises from pulling $\Gamma'$ back along $f$. Since $\Gamma$ and $f^*\Gamma'$ agree on $X_\Sigma$, we may assume that they agree on some open collar neighborhood $U_{\widehat{\Sigma}}$ of $X_\Sigma$, where $\widehat{\Sigma}$ is the product smooth structure.\footnote{A perturbation that yields agreement on $U$ will not change the diffeomorphism type relative to $X$.} Fix some smooth\footnote{Recall that $\bon{n}$ is the colimit of finite Grassmannians, which are smooth manifolds. By "smooth map", we mean one that is the colimit of smooth maps.} classifying map $a_0:N\to \bo$. Any two classifying maps $a,a':M\to\bo$ that classify $\tau_{M_\Gamma}$ and $f^*\tau_{M'_{\Gamma'}}$, respectively, (and extend $a_0$) are homotopic since the microbundles are isomorphic. Furthermore, the fact that $\bm{F}$ agrees with the topological micro-differential implies that the classifying maps are vertically homotopic over $\BTOP$. By Theorem \ref{thm:KS10.1 sm version}, $\Gamma$ and $f^*\Gamma'$ are concordant smooth structures on $M$ relative to $U_{\widehat{\Sigma}}$. By the concordance implies isotopy theorem (\cite[Essay~I,~Theorem~4.1]{ks}), they are in fact diffeomorphic relative to $U_{\widehat{\Sigma}}$ (and, in particular, relative to $X_\Sigma$).

       \emph{Essentially surjective}: Let $\LL M, \xi_M, \bm{G}_M\RR$ be an object of $\mathcal{M}^{\FS}_{X_\Sigma}$, i.e. an FS manifold with boundary $\LL X,\tau_{X_{\Sigma}},\bm{\mathrm{Id}}_{X\times X}\RR$. Let $N_{\widehat{\Sigma}}$ be as above. Fix some smooth (with respect to $\widehat{\Sigma}$) map $\bm{A}_0:\tau_{N_{\widehat{\Sigma}}} \to \gamo$ over a map $a_0:N\to\bo$ and a map $\bm{A}:\xi_M\to\gamo$ that agrees with $\bm{A}_0$ near $N$ (under the identification of $\tau_{N_{\widehat{\Sigma}}} $ with $\xi_M\big|_N$). Let $a:M\to\bo$ be the map covered by $\bm{A}$. Then, $k: M \xrightarrow{a} \bo \xrightarrow{j_{\oo}}\BTOP$ stably classifies $\micro{M}$. By construction, $a:M\to \bo$ is a lift of $k$ that agrees with $a_0$ near $N$, and thus determines a concordance class of smooth structures on $M$ extending $\widehat{\Sigma}$, by Theorem \ref{thm:KS10.1 sm version}. Let $\Gamma$ be one such smooth structure, and let $a':M\to \bo$ be a classifying map of $\tau_{M_\Gamma}$ that agrees with $a_0$ near $N$. Again by Theorem \ref{thm:KS10.1 sm version}, $a$ and $a'$ are vertically homotopic over $\BTOP$, so $\mathscr{H}_{X_\Sigma}\LL M_\Gamma,N\RR = \LL M, \tau_{M_\Gamma}, \bm{\mathrm{Id}}_{M\times \widehat{M}}\RR$ is FS isomorphic to $\LL M, \xi_M, \bm{G}_M\RR$ over the identity. In other words, there is a smooth microbundle isomorphism $\bm{F}$ such that the following diagram commutes.

               \begin{center}
   \begin{tikzcd}
  \micro{M} 
    \arrow[rrrrr, "{\bm{D}(\mathrm{Id}_M)}"] 
    \arrow[swap, dddr] 
  &&&&& 
  \micro{M} 
    \arrow[dddl] \\
  & \tau_{M_\Gamma} 
      \arrow[swap, ul, "{\bm{\mathrm{Id}}_{M\times \widehat{M}}}" {inner sep=0.5pt, yshift=-0.3ex}]
      \arrow[dd] 
      \arrow[rrr, "{\bm{F}}"] 
    &&& 
      \xi_M 
        \arrow[ur, "{\bm{G}_M}" {inner sep=0.5pt, yshift=-0.3ex}] 
        \arrow[dd] \\
  \\
  & M \arrow[rrr, "\mathrm{Id}_M"] &&& M
\end{tikzcd}
\end{center} \end{proof}
\end{lemma}

Although collar neighborhood data is necessary in order to determine natural FS data on a smooth manifold with boundary $X_\Sigma$, the choice of collar has no effect on the corresponding isomorphism class in either groupoid $\mathcal{M}^{\Sm}_{X_\Sigma}$ or $\mathcal{M}^{\FS}_{X_\Sigma}$. We can therefore drop collar notation when discussing the induced map on isomorphism classes $\pi_0\mathscr{H}_{X_\Sigma}: \pi_0\mathcal{M}^{\Sm}_{X_\Sigma}\to \pi_0\mathcal{M}^{\FS}_{X_\Sigma}$. If $[M_\Gamma]$ is a morphism in $\smcat{n}$ from $X_{1,\Sigma_1}$ to $X_{2,\Sigma_2}$, then $\pi_0\mathscr{H}_{\partial M_{\partial \Gamma}}$ determines a morphism in $\fscat{n}$ from $\mathscr{H}_\varnothing \LL X_{1,\Sigma_1}\RR = \LL X_1, \tau_{X_{1,\Sigma_1}}, \bm{\mathrm{Id}}_{X_1\times X_1}\RR$ to $\mathscr{H}_\varnothing \LL X_{2,\Sigma_2}\RR=\LL X_2, \tau_{X_{2,\Sigma_2}}, \bm{\mathrm{Id}}_{X_2\times X_2}\RR$ represented by $\LL M, \tau_{M_\Gamma}, \bm{\mathrm{Id}}_{M\times \widehat{M}}\RR$.

\begin{lemma}
    The map $\mathscr{F}_n:\smcat{n}\to\fscat{n}$ defined below is a functor.

Objects: For a closed, smooth $(n-1)$-manifold $X_\Sigma$, $\mathscr{F}_n\LL X_\Sigma\RR=\mathscr{H}_{\varnothing}\LL X_\Sigma\RR$.

   Morphisms: For a morphism $[M_\Gamma]$ in $\smcat{n}$ from $X_{1,\Sigma_1}$ to $X_{2,\Sigma_2}$, $\mathscr{F}_n\LL [M_\Gamma]\RR=\left[ \LL M, \tau_{M_\Gamma}, \bm{\mathrm{Id}}_{M\times \widehat{M}}\RR\right]$. 
  \begin{proof}

      \emph{Identities}: Since $\tau_{(X\times I)_{\widehat{\Sigma}}}$ is isomorphic to the external Whitney sum $\tau_{X_\Sigma}\boxplus\varepsilon$, $\mathscr{H}_{X_\Sigma}\big((X\times I)_{\widehat{\Sigma}}\big)$ is FS isomorphic (relative to the boundary) to the identity FS bordism on $\LL X,\tau_{X_\Sigma},\bm{\mathrm{Id}}_{X\times X}\RR$.

 \emph{Composition}: Let $\left[M_{1,\Gamma_1}\right]$ and $\left[M_{2,\Gamma_2}\right]$ be smooth morphisms from $\varnothing$ to $X_\Sigma$ and from $X_\Sigma$ to $\varnothing$, respectively, with composition given by $\left[\LL M_1 \cup_X M_2\RR_\Gamma\right]$ for some appropriate smooth structure $\Gamma$. The image under $\mathscr{F}_{n}$ of
$\left[\LL M_1 \cup_X M_2\RR_\Gamma\right]$ is
$\left[
 \LL  M_1 \cup_X M_2 ,\,
  \tau_{\LL M_1 \cup_X M_2 \RR_\Gamma},\,
  \bm{\mathrm{Id}}_{\LL M_1\cup_X M_2\RR\times\LL M_1\cup_X M_2\RR}
\RR\right]$. Since $\tau_{\LL M_1 \cup_X M_2 \RR_\Gamma}$ and $\tau_{M_{1,\Gamma_1}}\Cup_X \tau_{M_{2,\Gamma_2}}$ are isomorphic, this image is equal to $\left[\LL M_1\cup_XM_2,\tau_{M_{1,\Gamma_1}}\Cup_X \tau_{M_{2,\Gamma_2}},\bm{\mathrm{Id}}_{\LL M_1\times \widehat{M_1}\RR}\Cup_X \bm{\mathrm{Id}}_{\LL M_2\times \widehat{M_2}\RR}\RR\right]$. This argument extends immediately to the case of non-empty boundary components not in the gluing locus.
  \end{proof}
    
\end{lemma}

    \begin{proof}[Proof of Theorem \ref{thm: bord equiv}]

For $n\geq6$, $\mathscr{F}_n$ is essentially surjective and fully faithful, and is thus part of an equivalence of categories. Essential surjectivity follows from the essential surjectivity of $\mathscr{H}_{\varnothing}$ in dimension $\geq 5$. Fullness and faithfulness follow from the essential surjectivity and essential injectivity, respectively, of $\mathscr{H}_{X_{1,\Sigma_1}\sqcup X_{2,\Sigma_2}}$ in dimensions $\geq 6$.
\end{proof}

We will use $\mathscr{F}^{-1}_n$ to denote a weak inverse of $\mathscr{F}_n$.

\begin{theorem}
    \label{thm: sm cant dist htpy}
    For $m\geq 2$, $(4+m)$-dimensional ordinary smooth TQFTs fail to distinguish $\ssm{\meight}$ and $\ssm{\msixt}$.
    \begin{proof}
        Let $\mathcal{Z}:\smcat{4+m}\to \vectc$ be an ordinary smooth TQFT. Since $\mathscr{F}_n^{-1}$ is clearly symmetric monoidal, we may define the ordinary $(4+m)$-dimensional FS TQFT $\mathcal{Y}$ to be the following composition:
    
        \begin{center}
    $\mathcal{Y}:\fscat{4+m}\xlongrightarrow{\mathscr{F}^{-1}_{4+m}} \smcat{4+m}\xlongrightarrow{\mathcal{Z}} \vectc$.
\end{center}
\noindent By Theorem \ref{thm: main thm FS}, $\mathcal{Y}$ cannot distinguish the FS manifolds $\meight\times S^m$ and $\msixt\times S^m$. So, $\mathcal{Z}$ cannot distinguish the smooth manifolds $\meight\times S^m$ and $\msixt\times S^m$, (which we define in the smooth category to be the images of their FS counterparts under $\mathscr{F}^{-1}_n$).
    \end{proof}
\end{theorem}


\section{Special Consideration in Dimension 5}
\label{sec: dim 5}

In \cite[Theorem~3]{kt}, Kreck and Teichner proved that ordinary TQFTs (in particular, a class called positive TQFTs) can distinguish simply connected, closed, smooth $5$-manifolds. In this section, we provide a pair of closed, non-homotopy equivalent, smooth, \emph{non}-simply connected $5$-manifolds that cannot be distinguished by ordinary $5$-dimensional TQFTs. The topological and FS versions of this result are simply Theorems \ref{thm: main thm top} and \ref{thm: main thm FS} in the case $m=1$.

For the smooth case, we are not in a high enough dimension for the functor defined in the previous section $\mathscr{F}_5:\smcat{5}\to\fscat{5}$ to be an equivalence. It is, however, fully faithful. We use this fact to construct a functor $\mathscr{G}: \fscat{4}\xlongrightarrow{-\times S^1}\smcat{5}$.

We first assign a smooth structure to every closed topological $3$-manifold in the following way. For every closed connected topological $3$-manifold, we may arbitrarily choose a fixed smooth structure. Then, we assign to every closed $3$-manifold the disjoint union of the smooth structures assigned to its connected components. We now construct the functor $\mathscr{G}$ as follows.

Objects: Let $\LL X, \xi_X,\bm{G}_X\RR$ be a closed FS $3$-manifold, and let $\Sigma$ be the smooth structure on $X$ chosen as described above. Let $\widehat{\Sigma}$ be the product smooth structure on $X\times S^1$ given by $\Sigma$ and the standard smooth structure on $S^1$. We then define $\mathscr{G}\LL X,\xi_X,\bm{G}_X\RR = \LL X\times S^1\RR_{\widehat{\Sigma}}$.

\sloppy Morphisms: Let $\LL M,\xi_M,\bm{G}_M\RR$ be a $4$-dimensional FS bordism from  $\LL X_1, \xi_{X_1},\bm{G}_{X_1}\RR$ to $\LL X_2,\xi_{X_2},\bm{G}_{X_2}\RR$. By obstruction theory, $3$-manifolds have a unique isomorphism class of FS structures, so we may also consider $\LL M,\xi_M,\bm{G}_M\RR$ to be an FS bordism from $\LL X_1,\tau_{X_{1,\Sigma_1}},\bm{\mathrm{Id}}_{X_1\times X_1}\RR$ to $\LL X_2,\tau_{X_{2,\Sigma_2}},\bm{\mathrm{Id}}_{X_2\times X_2}\RR$, which yields a $5$-dimensional FS bordism $\LL M\times S^1,\xi_M\boxplus\tau_{S^1}, \bm{G}_M \boxplus \bm{\mathrm{Id}}_{S^1\times S^1}\RR$ from $\LL X_1\times S^1,\tau_{{(X_1\times S^1)}_{\widehat{\Sigma_1}}}, \bm{\mathrm{Id}}_{\LL X_1\times S^1\RR \times \LL X_1\times S^1\RR}\RR$ to $\LL X_2\times S^1, \tau_{{(X_2\times S^1)}_{\widehat{\Sigma_2}}}, \bm{\mathrm{Id}}_{\LL X_2\times S^1\RR \times \LL X_2\times S^1\RR}\RR$. Since $\mathscr{F}_5$ is fully faithful, this bordism determines a smooth morphism $\left[(M\times S^1)_{\Gamma}\right]$ in $\smcat{5}$ from $(X_1\times S^1)_{\widehat{\Sigma_1}}$ to $(X_2\times S^1)_{\widehat{\Sigma_2}}$, which we define to be $\mathscr{G} \left[\LL M,\xi_M,\bm{G}_M\RR\right]$.

\begin{theorem}
\label{thm: 5 dim smooth}
   Ordinary $5$-dimensional smooth TQFTs fail to distinguish $\meight\times S^1$ and $\msixt\times S^1$.
    \begin{proof}
        Let $\mathcal{Z}:\smcat{5}\to\vectc$ be an ordinary $5$-dimensional smooth TQFT. Consider the following composition:

        \begin{center}
            $\mathcal{W}:\fscat{4}\xlongrightarrow{\mathscr{G}}\smcat{5}\xlongrightarrow{\mathcal{Z}}\vectc$
        \end{center}

        \noindent The functor $\mathscr{G}$ is symmetric monoidal due to our method of assigning smooth structures to objects of $\fscat{4}$. Thus, $\mathcal{W}$ is an ordinary TQFT and therefore cannot distinguish $\meight$ from $\msixt$, so $\mathcal{Z}$ cannot distinguish $\meight\times S^1$ from $\msixt\times S^1$.
    \end{proof}
\end{theorem}


\section{Non-Spinnable Counter-Examples}
\label{sec: non spin}

In this section, we first argue that $\meight\times S^m$ and $\msixt\times S^m$ are spinnable manifolds. Second, we show that $\LL \meight \# \mathbb{C}P^2 \RR \times S^m$ and $\LL \msixt \# \mathbb{C}P^2 \RR \times S^m$ are non-homotopy equivalent, non-spinnable manifolds which cannot be distinguished by ordinary TQFTs.

The standard definition of a spin structure on a smooth $n$-manifold $M$ is a lift $M\to \BSpin(n)$ of a map $M\to\bon{n}$ classifying the tangent bundle. One can extend this definition to topological manifolds using the tangent microbundle and an analogous space $\BTopSpin(n)$. We will, however, adopt a purely cohomological definition of spinnability using Wu classes.\footnote{Stiefel-Whitney classes were defined in the topological setting by Fadell in \cite{fadell}. He observed (as did Thom \cite{thom}) that the proof of the equivalence between the vanishing of the first two Stiefel-Whitney classes and the existence of a spin structure on a smooth manifold does not depend on the existence of a smooth structure. We could also then define spinnable topological manifolds via vanishing Stiefel-Whitney classes rather than Wu classes. (See \cite[Chapter~7]{survey} for a concise overview.)}

\begin{definition}
    The $\bm{i}$\textbf{th Wu class} of an $n$-manifold $M$ is the unique class $v_i(M)\in H^i(M;\mathbb{Z}/2)$ such that, for any $\alpha\in H^{n-i}(M;\mathbb{Z}/2)$, $v_i(M)\smile \alpha=\sqi{i}(\alpha)$, where $\sqi{i}$ denotes the $i$th Steenrod square. The \textbf{total Wu class} is the sum \[v(M)=\displaystyle\sum_{i=0}^\infty v_i(M).\]
\end{definition}

If $M$ is a smooth manifold, the Wu formula \cite{wu} relates the total Wu class to the total Stiefel-Whitney class as follows:

\begin{center}
    $\sq(v(M))=w(M)$,
\end{center}

\noindent where $\sq$ and $w(M)$ are the total Steenrod square and total Stiefel-Whitney class, respectively. From this, we may deduce that $w_1(M)=v_1(M)$ and $w_2(M)=\LL v_1(M)\RR^2+v_2(M))$. Thus, the vanishing of the first two Stiefel-Whitney classes is equivalent to the vanishing of the first two Wu classes. Since the former is equivalent to the existence of a spin structure on a smooth manifold, we use the following definition.

\begin{definition}
    \label{def: top spin}
    A topological manifold $M$ is \textbf{spinnable} if $v_1(M)$ and $v_2(M)$ vanish.
\end{definition}

\begin{remark}
\label{rem: spin if simp con and even int}
    It follows from the properties of Steenrod squares that a closed, simply-connected $4$-manifold is spinnable if and only if its intersection form is even. This is explained in \cite[Proposition~8.4~and~Remark~8.9]{survey}.
    \end{remark}
    
\begin{lemma}
\label{lem: prod of spin iff}
    Let $M$ and $N$ be connected topological manifolds. If $N$ is spinnable, then $M\times N$ is spinnable if and only if $M$ is spin.
    \begin{proof}
        This statement follows quickly from basic properties of Steenrod squares. Let $p_1$ and $p_2$ be the projections out of $M\times N$ onto the first and second factor. Then, a class in $H^*\LL M\times N\RR$ is a sum of classes of the form $p_1^*(\alpha)\smile p_2^*(\beta)$. By the Cartan formula and naturality of Steenrod squares, we have \[\sqi{k}\big( p_1^*(\alpha)\smile p_2^*(\beta)\big)=\displaystyle\sum_{i+j=k}p_1^*\big(\sqi{i}(\alpha)\big)\smile p^*_2\big(\sqi{j}(\beta)\big).\] Since $N$ is spinnable, $\sqi{1}$ and $\sqi{2}$ act trivially on $\beta$, so, for $i=1,2$, \[\sqi{i}\big( p_1^*(\alpha)\smile p_2^*(\beta)\big) = p_1^*\big(\sqi{i}(\alpha)\big)\smile p_2^*\LL \sqi{0}(\beta)\RR = p_1^*\big(\sqi{i}(\alpha)\big)\smile p^*_2(\beta). \]
       If $M$ is spinnable, then it is clear that $\sqi{1}$ and $\sqi{2}$ act trivially on the product. If $M$ is not spinnable, then there is some class $\alpha\in H^*(M,\mathbb{Z}/2)$ such that either $\sqi{1}(\alpha)$ or $\sqi{2}(\alpha)$ is nonzero. Taking $\beta$ to be $1\in H^0\LL N;\mathbb{Z}/2\RR$, it follows from the injectivity of $p^*_1$ that either $\sqi{1}$ or $\sqi{2}$ acts non-trivially on the product.
    \end{proof}
\end{lemma}

\begin{corollary}
\label{cor: are spin}
    The manifolds $\meight\times S^m$ and $\msixt\times S^m$ are spinnable. \qed
\end{corollary}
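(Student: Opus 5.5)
The plan is to apply Lemma \ref{lem: prod of spin iff} with $M$ equal to $\meight$ or $\msixt$ and $N=S^m$. This needs two ingredients: that $S^m$ is spinnable, and that each of the four-dimensional building blocks is spinnable. Both are connected, so the lemma applies directly once these are verified.

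First I will check that $S^m$ is spinnable. For $m\geq 1$, the reduced cohomology of $S^m$ is concentrated in degree $m$. The Wu class $v_i(S^m)$ is characterized by $v_i\smile\alpha=\sqi{i}(\alpha)$ for $\alpha\in H^{m-i}(S^m;\mathbb{Z}/2)$. For $0<i<m$, the group $H^{m-i}$ vanishes, so $v_i=0$ trivially. For $i=m$, $\alpha$ lies in $H^0$, and $\sqi{m}$ of a degree-zero class is zero for $m>0$, so $v_m=0$ as well. Hence $v_1=v_2=0$. (The case $m=0$, if allowed, is trivial since $M\times S^0$ is two copies of $M$; I would remark on it or exclude it.)

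Second, I will show that $\meight$ and $\msixt$ are spinnable. Both are closed and simply connected, and their intersection forms $E_8\oplus E_8$ and $E_{16}$ are even by definition. Remark \ref{rem: spin if simp con and even int} therefore gives spinnability. Combining the two steps with Lemma \ref{lem: prod of spin iff}, where "spin" in that lemma's statement is read as "spinnable", yields the corollary.

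There is no real obstacle here. The only point needing care is the degenerate bookkeeping for $S^m$ in low dimensions, specifically $m=1,2$, where $i$ can equal $m$. This is handled by the observation that $\sqi{i}$ kills $H^0$ for $i>0$.
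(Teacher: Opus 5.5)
Your proposal is correct and matches the paper's implicit argument: the corollary is meant to follow from Remark~\ref{rem: spin if simp con and even int}, since both building blocks are simply connected with even intersection forms, together with Lemma~\ref{lem: prod of spin iff} applied with $N=S^m$. Your explicit Wu-class check that $S^m$ is spinnable, and your caveat that $m=0$ needs separate treatment because $S^0$ is disconnected, fill in details the paper leaves unstated.
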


\begin{corollary}
\label{cor: are not spin}
    The manifolds $\LL \meight \# \mathbb{C}P^2 \RR \times S^m$ and $\LL \msixt \# \mathbb{C}P^2 \RR \times S^m$ are not spinnable.
    \begin{proof}
        Since the intersection form of $\mathbb{C}P^2$ is $\langle1\rangle$, $\meight \# \mathbb{C}P^2$ and $\msixt \# \mathbb{C}P^2$ are not spinnable by Remark \ref{rem: spin if simp con and even int}. By Lemma \ref{lem: prod of spin iff}, the products with $S^m$ are also not spinnable.
    \end{proof}
\end{corollary}

\begin{proposition}
\label{prop: non spin not htpy eq}
    The manifolds $\LL \meight \# \mathbb{C}P^2 \RR \times S^m$ and $\LL \msixt \# \mathbb{C}P^2 \RR \times S^m$ are not homotopy equivalent and, if $m$ is even and $\geq2$, they are still not homotopy equivalent after stabilization. They are nevertheless indistinguishable by ordinary $(4+m)$-dimensional TQFTs in the topological, FS, and smooth categories.
    \begin{proof}
        We show using Magma (see \ref{appendix}ppendix) that the bilinear forms $E_8\oplus E_8\oplus \langle 1 \rangle$ and $E_{16}\oplus \langle 1 \rangle$ are not isometric. The first statement then follows from Lemmas \ref{lem: diff int not htpy eq after cross} and \ref{lem: diff int not stab htp}. 

        Recall that $\meight$ and $\msixt$ are stably isomorphic. Then, $\meight \# \mathbb{C}P^2$ and $\msixt \#\mathbb{C}P^2$ are stably isomorphic and therefore cannot be distinguished by $4$-dimensional ordinary TQFTs, by Proposition \ref{prop: cant dist stab iso}. We may then apply our dimensional reduction arguments to show that $\LL \meight \# \mathbb{C}P^2 \RR \times S^m$ and $\LL \msixt \# \mathbb{C}P^2 \RR \times S^m$ are indistinguishable by ordinary TQFTs.
    \end{proof}
\end{proposition}


\section{Highly-Connected Counter-Examples}
\label{sec: high conn}

For all $n\geq2$, there are pairs of stably-diffeomorphic, $(2n-1)$-connected, closed, smooth $4n$-manifolds $M^{4n}$ and $N^{4n}$ with non-isometric intersection forms which one can construct by plumbing (see \cite{kt}\cite{wall}\cite{kreckandschafer}). We may choose to have $M^{4n}$ and $N^{4n}$ arise from plumbing according to the $E_8\oplus E_8$ and $E_{16}$ lattices, respectively, giving us direct analogues of our $4$-dimensional "building blocks" used in previous sections. The following proposition is an analogue of Proposition \ref{prop:88 16 product not htpy eq or stab iso} and follows immediately from Lemmas \ref{lem: diff int not htpy eq after cross} and \ref{lem: diff int not stab htp}

\begin{proposition}
    \label{prop: high conn 88 16 product not htpy eq or stab iso}
    The manifolds $\ssm{M^{4n}}$ and $\ssm{N^{4n}}$ are not homotopy equivalent. If $m$ is even and nonzero, they are also not homotopy equivalent after stabilization. \qed
\end{proposition}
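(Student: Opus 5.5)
The plan is to apply Lemma \ref{lem: diff int not htpy eq after cross} with $k=2n$, $M_1=M^{4n}$, $M_2=N^{4n}$, and then Lemma \ref{lem: diff int not stab htp} for the stabilized statement.

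\emph{Hypotheses of Lemma \ref{lem: diff int not htpy eq after cross}.} Both manifolds are $(2n-1)$-connected closed $4n$-manifolds obtained by plumbing, so their middle cohomology $H^{2n}$ is free of rank $16$. Their intersection forms are $E_8\oplus E_8$ and $E_{16}$, which are positive definite. These forms are not isometric by definition of $E_{16}$. They are also not isometric up to sign: $-E_{16}$ is negative definite and so cannot be isometric to the positive definite form $E_8\oplus E_8$.

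\emph{The two cases of the lemma.} For $m\neq 2n$, Lemma \ref{lem: diff int not htpy eq after cross} applies directly: $S^m$ contributes nothing to $H^{2n}$, so the cup-product pairing on $H^{2n}$ is a homotopy invariant that distinguishes the two products. For $m=2n$, the lemma still applies because both forms are definite. The extra class $\sigma$ coming from $S^{2n}$ spans the unique cup-square-zero summand, and the triple product form descends to $q_i$ on the quotient. This gives the first claim.

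\emph{Stabilization, $m$ even and nonzero.} The dimension is $4n+m=2k$ with $k=2n+m/2$. Stabilization means connected sum with $S^k\times S^k$. Since $m>0$, we have $i=2n<k$, and the invariants witnessing inequivalence live on $H^{2n}$:
\begin{itemize}
\item for $m\neq 2n$, the intersection pairing on $H^{2n}$;
\item for $m=2n$, the triple form $Q$ on $H^{2n}$.
\end{itemize}
Lemma \ref{lem: diff int not stab htp} therefore applies, because connected sum with $S^k\times S^k$ leaves $H^{2n}$ and its cup products unchanged.

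\emph{Main obstacle.} The one point to check is that the witnessing forms are genuinely multilinear forms on $H^{2n}$ alone, evaluated against the fundamental class, rather than pairings that need cohomology in complementary degrees. The pairing is $\langle \alpha\smile\beta\smile\Omega,[X]\rangle$, where $\Omega$ is the pullback of the generator of $H^m(S^m)$ (or $\sigma$ itself when $m=2n$).
\begin{itemize}
\item \emph{Homotopy invariance of $\Omega$.} When $m\neq 2n$ and $m\neq 4n$, $H^m$ of each product is $\mathbb{Z}$, so any homotopy equivalence sends $\Omega$ to $\pm\Omega$.
\item \emph{Edge case $m=4n$.} Here $H^m$ has rank $2$. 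One would identify $\Omega$ instead as the class annihilating all products from $H^{2n}$, or simply as the unique primitive class (up to sign) whose product with $H^{2n}\smile H^{2n}$ is nonzero in the appropriate sense.
\item \emph{Effect of stabilization.} After stabilization, $\Omega$ remains in $H^m$ with $m<2k$. When $m=k$ (which happens exactly when $m=4n$), the new $S^k\times S^k$ classes pair trivially with $H^{2n}$, so the restricted form is unchanged.
\end{itemize}
I expect this bookkeeping with $\Omega$ to be the only subtle step.
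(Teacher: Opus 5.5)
Your proposal is correct and matches the paper's proof, which simply invokes Lemma \ref{lem: diff int not htpy eq after cross} with $k=2n$ (using definiteness of $E_8\oplus E_8$ and $E_{16}$ for the case $m=2n$) and then Lemma \ref{lem: diff int not stab htp} for the $(4n+m)$-dimensional products, with $i=2n<2n+m/2$. One correction to your optional $\Omega$ bookkeeping for $m=4n$: the class annihilating all products of $H^{2n}$-classes is the pullback of the cohomology fundamental class of $M^{4n}$, not $\Omega$. You can avoid $\Omega$ entirely for every $m\neq 2n$ by using the pairing $H^{2n}\times H^{2n}\to\mathbb{Z}$ given by the cup product into the subgroup spanned by products. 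By unimodularity this subgroup is infinite cyclic, generated by that pullback class, so any homotopy equivalence carries generator to generator up to sign, and stabilization does not change it.
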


\begin{proposition}
    \label{prop: high conn indist}
    In the smooth, FS, and topological categories, for $n\geq2$, $(4n+m)$-dimensional ordinary TQFTs cannot distinguish $\ssm{M^{4n}}$ and $\ssm{N^{4n}}$.
    \begin{proof}
        In all three cases, the proof is identical. Let $\mathcal{Z}$ be an ordinary, $(4n+m)$-dimensional TQFT, and let $\mathcal{W}$ be its reduction along $S^m$: \[\mathcal{W}:\bord_{4n}\xrightarrow{-\times S^m}\bord_{4n+m}\xrightarrow{\mathcal{Z}}\vectc.\] Since $M^{4n}$ and $N^{4n}$ are stably isomorphic, they are indistinguishable to $\mathcal{W}$ by Proposition \ref{prop: cant dist stab iso}. Thus, $\mathcal{Z}$ cannot distinguish $\ssm{M^{4n}}$ and $\ssm{N^{4n}}$. 
    \end{proof}
\end{proposition}

Restricting to taking products with spheres of dimensions high enough to maintain the connectivity of $M^{4n}$ and $N^{4n}$ yields the following variant of Proposition \ref{prop: high conn indist}.

\begin{proposition}
\label{prop: high conn main theorem}
    For all $n\geq2$ and $m\geq 2n$, $\ssm{M^{4n}}$ and $\ssm{N^{4n}}$ are $(2n-1)$-connected and are indistinguishable by ordinary $(4n+m)$-dimensional TQFTs. \qed
\end{proposition}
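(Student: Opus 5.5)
The statement has two parts: a connectivity claim and an indistinguishability claim. I will treat them separately, and most of the work is already done by earlier results.

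\emph{Indistinguishability.} This is exactly Proposition \ref{prop: high conn indist}, which holds for every $m$ and in each of the three categories. So I will simply cite it, noting that the argument is the dimensional reduction along $S^m$: the reduced theory $\mathcal{W}:\bord_{4n}\to\vectc$ is ordinary, since reductions of reductions are reductions along products. Moreover $M^{4n}$ and $N^{4n}$ are stably diffeomorphic, hence stably isomorphic in the smooth, FS and topological categories, so Proposition \ref{prop: cant dist stab iso} applies. The only point to check is that $M^{4n}$ and $N^{4n}$ are genuinely smooth. This holds because they come from plumbing, so no FS detour (Theorem \ref{thm: bord equiv}) is needed: the functor $-\times S^m$ is defined directly on $\smcat{4n}$.

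\emph{Connectivity.} I will use the fact that $M^{4n}$ is $(2n-1)$-connected and $S^m$ is $(m-1)$-connected. Then the homotopy groups split as
\[
\pi_i\big(\ssm{M^{4n}}\big)\cong\pi_i\big(M^{4n}\big)\oplus\pi_i\big(S^m\big).
\]
For $i\le 2n-1$, the first summand vanishes by hypothesis. The second summand vanishes whenever $i\le m-1$, and this range contains $i\le 2n-1$ precisely when $m\ge 2n$. Hence the product is $(2n-1)$-connected, and the same argument applies to $N^{4n}$. Path-connectedness is included in the case $i=0$.

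\emph{Main obstacle.} There is essentially none. The only subtlety is the bookkeeping of the hypothesis $m\ge 2n$, which is exactly what prevents the sphere factor from contributing a nonzero $\pi_m$ in degree $\le 2n-1$. For the strict bound $m\ge 2n$, we have $m-1\ge 2n-1$, so this holds. I would also remark, in passing, that the non-homotopy-equivalence asserted in Proposition \ref{prop: high conn 88 16 product not htpy eq or stab iso} is unaffected by the restriction on $m$, so these pairs really are counterexamples.
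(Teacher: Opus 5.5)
Your proposal is correct and is essentially the paper's argument: indistinguishability is just Proposition~\ref{prop: high conn indist}, which holds for every $m$, and the connectivity follows because $\pi_i$ of the product splits and $S^m$ is $(m-1)$-connected with $m-1\geq 2n-1$. One small quibble: smoothness of the closed manifolds $M^{4n},N^{4n}$ is part of how the paper chooses them, not a consequence of plumbing alone (a plumbing has homotopy-sphere boundary), though this does not affect your argument.
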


\begin{remark}
    For $n\geq 2$, $M^{4n}$ and $N^{4n}$ are at least $2$-connected, and are thus spinnable. By Lemma \ref{lem: prod of spin iff}, $\ssm{M^{4n}}$ and $\ssm{N^{4n}}$ are spinnable as well.
\end{remark}


\appendix
\section{Appendix -- Magma Computation}
\label{appendix}

The following is Magma code that shows that $E_8\oplus E_8\oplus \langle 1 \rangle$ and $E_{16}\oplus \langle 1 \rangle$ are not isometric. First, we define a candidate for a basis of the $E_{16}$ lattice, by mimicking the well-known basis for $E_8$.

\begin{verbatim}
> M := Matrix(RationalField(), 16, 16, 
[2, 0, 0, 0, 0, 0, 0, 0, 0, 0, 0, 0, 0, 0, 0, 0,
-1, 1, 0, 0, 0, 0, 0, 0, 0, 0, 0, 0, 0, 0, 0, 0, 
0, -1, 1, 0, 0, 0, 0, 0, 0, 0, 0, 0, 0, 0, 0, 0, 
0, 0, -1, 1, 0, 0, 0, 0, 0, 0, 0, 0, 0, 0, 0, 0, 
0, 0, 0, -1, 1, 0, 0, 0, 0, 0, 0, 0, 0, 0, 0, 0, 
0, 0, 0, 0, -1, 1, 0, 0, 0, 0, 0, 0, 0, 0, 0, 0, 
0, 0, 0, 0, 0, -1, 1, 0, 0, 0, 0, 0, 0, 0, 0, 0, 
0, 0, 0, 0, 0, 0, -1, 1, 0, 0, 0, 0, 0, 0, 0, 0, 
0, 0, 0, 0, 0, 0, 0, -1, 1, 0, 0, 0, 0, 0, 0, 0, 
0, 0, 0, 0, 0, 0, 0, 0, -1, 1, 0, 0, 0, 0, 0, 0, 
0, 0, 0, 0, 0, 0, 0, 0, 0, -1, 1, 0, 0, 0, 0, 0, 
0, 0, 0, 0, 0, 0, 0, 0, 0, 0, -1, 1, 0, 0, 0, 0, 
0, 0, 0, 0, 0, 0, 0, 0, 0, 0, 0, -1, 1, 0, 0, 0,
0, 0, 0, 0, 0, 0, 0, 0, 0, 0, 0, 0, -1, 1, 0, 0, 
0, 0, 0, 0, 0, 0, 0, 0, 0, 0, 0, 0, 0, -1, 1, 0, 
½, ½, ½, ½, ½, ½, ½, ½, ½, ½, ½, ½, ½, ½, ½, ½]);
\end{verbatim}

\noindent Next, we check that it does indeed define the $E_{16}$ lattice.

Input:
\begin{verbatim}
    >LatticeWithBasis(M);
\end{verbatim}

Output: 
\begin{verbatim}
    Lattice of rank 16 and degree 16
Determinant: 1
Basis:
( 4  0  0  0  0  0  0  0  0  0  0  0  0  0  0  0)
(-2  2  0  0  0  0  0  0  0  0  0  0  0  0  0  0)
( 0 -2  2  0  0  0  0  0  0  0  0  0  0  0  0  0)
( 0  0 -2  2  0  0  0  0  0  0  0  0  0  0  0  0)
( 0  0  0 -2  2  0  0  0  0  0  0  0  0  0  0  0)
( 0  0  0  0 -2  2  0  0  0  0  0  0  0  0  0  0)
( 0  0  0  0  0 -2  2  0  0  0  0  0  0  0  0  0)
( 0  0  0  0  0  0 -2  2  0  0  0  0  0  0  0  0)
( 0  0  0  0  0  0  0 -2  2  0  0  0  0  0  0  0)
( 0  0  0  0  0  0  0  0 -2  2  0  0  0  0  0  0)
( 0  0  0  0  0  0  0  0  0 -2  2  0  0  0  0  0)
( 0  0  0  0  0  0  0  0  0  0 -2  2  0  0  0  0)
( 0  0  0  0  0  0  0  0  0  0  0 -2  2  0  0  0)
( 0  0  0  0  0  0  0  0  0  0  0  0 -2  2  0  0)
( 0  0  0  0  0  0  0  0  0  0  0  0  0 -2  2  0)
( 1  1  1  1  1  1  1  1  1  1  1  1  1  1  1  1)
Basis Denominator: 2
\end{verbatim}

The above confirms that our matrix defines a full rank, unimodular, degree 16 matrix. We now check parity:

Input: \begin{verbatim}
    >IsEven(LatticeWithBasis(M));
\end{verbatim}

Output: 
\begin{verbatim}
    true
\end{verbatim}

\noindent Next, we verify that it is distinct from the $E_8\oplus E_8$ lattice.

Input:
\begin{verbatim}
>IsIsometric(LatticeWithBasis(M),DirectSum(Lattice("E",8),Lattice("E",8)));
\end{verbatim}

Output: 
\begin{verbatim}
    false
\end{verbatim}

We have now shown that the lattice with basis $M$ is indeed $E_{16}$. 

\noindent Next, we check if the two lattices remain distinct after direct sum with $\langle 1 \rangle$.

Input: \begin{verbatim}
    > Z := Matrix(IntegerRing(), 1, 1, [1]);
    >IsIsometric(DirectSum(LatticeWithBasis(M),LatticeWithBasis(Z)),
     DirectSum(DirectSum(Lattice("E",8),Lattice("E",8)),LatticeWithBasis(Z)));
\end{verbatim}

Output: \begin{verbatim}
    false
\end{verbatim}

\printbibliography
\end{document}